\documentclass{preprint}

\newcommand{\im}{\operatorname{im}}

\newcommand{\cov}{\operatorname{cov}}

\newcommand{\topmap}{\operatorname{top}}
\newcommand{\botmap}{\operatorname{bot}}

\newcommand{\cB}{\mathcal{B}}
\newcommand{\cC}{\mathcal{C}}
\newcommand{\cF}{\mathcal{F}}
\newcommand{\cP}{\mathcal{P}}
\newcommand{\cR}{\mathcal{R}}
\newcommand{\cT}{\mathcal{T}}

\usepackage{tcolorbox}

\usepackage{graphicx}
\usepackage{tikz}
\usepackage{wrapfig}
\usetikzlibrary{patterns}

\usepackage{bm} 

\makeatletter
\renewcommand{\tocsection}[3]{%
\indentlabel{\@ifnotempty{#2}{\ignorespaces#1 #2\quad}}#3\dotfill}
\makeatother

\usepackage[foot]{amsaddr}
\usepackage[hang, flushmargin]{footmisc}

\theoremstyle{definition}
\newtheorem*{notation*}{Notation}

\usepackage{lipsum}

\counterwithin*{equation}{section}

\title[Boolean Antichains]{Counting Boolean antichains}

\author{Julien Garber}
\address[J.G., L.S.G, T.G.]{Fakult{\"a}t f{\"u}r Mathematik, Ruhr-Universit{\"a}t Bochum, Universit{\"a}tstr. 150, 44801 Bochum}
\email{julien.garber@rub.de}
\author{Lina S. Goltermann}
\email{lina.goltermann@edu.rub.de}
\author{Daniel Horiatakis}
\email{daniel.horiatakis@uni-graz.at}
\address[D.H.]{Institut f{\"u}r Mathematik und Wissenschaftliches Rechnen, Universität Graz, Heinrichstraße 36, 8010 Graz, Austria}

\author{Dennis König}
\address[D.K.]{Universit{\"a}t zu K{\"o}ln, Department Mathematik/Informatik, Abteilung Mathematik, Weyertal 86-90, 50931 K{\"o}ln, Germany}

\author{Tal Gottesman}
\email{tal.gottesman@rub.de}

\keywords{antichains, lattice congruence, Boolean lattice, partition lattice, Tamari lattice}
\usepackage[style=alphabetic, maxnames=99, maxalphanames=99]{biblatex}
\DeclareLabelalphaTemplate{
  \labelelement{
    \field[final]{shorthand}
    \field{label}
    \field[strwidth=2,strside=left,ifnames=1]{labelname}
    \field[strwidth=1,strside=left]{labelname}
  }
}
\DeclareFieldFormat{extraalpha}{#1}
\begin{document}

\begin{abstract}
    We say that an antichain in a lattice $L$ is Boolean if it generates a Boolean sublattice in $L$ in a particularly nice way. These purely combinatorial objects play a role in the representation theory of the incidence algebra of the lattice $L$, as indicated by recent results of Rognerud, Yıldırım and of the last author with Klász, Kleinau and Marczinzik. Given these motivations, it is natural to ask: Can we count and construct Boolean antichains in certain lattices? In this paper we give a construction of Boolean antichains in Boolean lattices, partition lattices and Tamari lattices. Furthermore, we share the surprisingly elegant formulas we found for the number of these Boolean antichains.
\end{abstract}

\maketitle
\tableofcontents

\section{Introduction} 
Antichains appear in the representation theory of the incidence algebra $A$ of a finite lattice $L$. They describe order ideals, or lower sets, of a lattice by the set of their maxima, leading to a parametrisation of modules over $A$ that have a simple top \cite[Proposition 2.1]{iyama2022distributive}.
This is somewhat surprising in module categories that are most often wild \cite{loupias1975indecomposable}. Within these well behaved modules, those associated to \emph{Boolean antichains} are so-called perfect modules \cite{GKKM}. More crucially, Boolean antichains play an important role in results about fractionally Calabi-Yau posets (\cite{gottesman2025fractionally},\cite{rognerud2021bounded}, \cite{yildirim2019coxeter}, \cite{kleinau2026cambrian}), which fit in a far-reaching conjecture of Chapoton \cite{chapoton:hal-04037012}. 
The fact that they appear in these concrete examples raises the question: Is this a coincidence? Or do there simply exist many Boolean antichains in general? This leads to the better-suited question: Can we count and construct them, particularly in non-distributive settings?

Counting antichains in general is hard. For instance, Dedekind's problem \cite{dedekind1897decomposition} asks: How many antichains are there in a Boolean lattice? To this day, there is no closed-form formula. Only bounds \cite{kleitman1969dedekind} and the exact values up to $n = 9$ \cite[Entry A000372]{oeis2025dedekind} are known. However, when we restrict to Boolean antichains, the problem becomes tractable, as we are able to count Boolean antichains not only in Boolean lattices but also in several other families of lattices. The paper is organised as follows.

In \Cref{sec: Generalities}, we define the notion of Boolean antichains and give bounds on their size.
We use our first results to count Boolean antichains in the Boolean lattice in \Cref{sec: Boolean lattices}.
\begin{theorem*}[\Cref{thm:main_thm_for_Boolean_lattices}]
    The number of Boolean antichains of size $k$ in the Boolean lattice $\cB_n$ is given by the Stirling number of the second kind $S(n+1,k+1)$ \cite{stirling1749differential}.
\end{theorem*}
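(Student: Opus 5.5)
The plan is to reduce the statement to a bijection with set partitions. I will use the characterisation from \Cref{sec: Generalities}: an antichain $A=\{x_1,\dots,x_k\}$ in a lattice $L$ is Boolean exactly when the map $\varphi_A\colon \cB_k\to L$, $I\mapsto \bigvee_{i\in I}x_i$ (with $\varphi_A(\emptyset)=\hat 0$), is an injective lattice homomorphism. In particular this requires $x_i\wedge x_j=\hat 0$ for $i\neq j$ and no $x_i$ equal to $\hat 0$. If the paper's definition is phrased through the sublattice generated by $A$ instead, the first step is to translate it into this form using the bounds and basic properties of \Cref{sec: Generalities}.

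\emph{Step 1 (characterisation in $\cB_n$).} I will show that $A=\{X_1,\dots,X_k\}\subseteq \cB_n$ is a Boolean antichain if and only if the $X_i$ are nonempty and pairwise disjoint.
\begin{itemize}
\item Necessity: the homomorphism property gives $X_i\cap X_j=\varphi_A(\{i\}\cap\{j\})=\varphi_A(\emptyset)=\emptyset$ for $i\ne j$. Injectivity gives $X_i\neq\emptyset$.
\item Sufficiency: if the $X_i$ are nonempty and pairwise disjoint, they are automatically pairwise incomparable. Moreover $\varphi_A(I)=\bigsqcup_{i\in I}X_i$.
\begin{itemize}
\item Unions and intersections of such disjoint unions are computed blockwise, so $\varphi_A$ preserves joins and meets.
\item Since every block is nonempty, distinct index sets give distinct unions, so $\varphi_A$ is injective.
\end{itemize}
\end{itemize}
The case $k=0$ (the empty antichain) is consistent with this and contributes $S(n+1,1)=1$.

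\emph{Step 2 (bijection).} Adjoin a new element $0$ to $[n]$. Send the Boolean antichain $\{X_1,\dots,X_k\}$ to the partition of $\{0\}\cup[n]$ with blocks
\[
X_1,\dots,X_k \quad\text{and}\quad \{0\}\cup\bigl([n]\setminus \textstyle\bigcup_i X_i\bigr).
\]
This partition has exactly $k+1$ blocks. Conversely, given a partition of $\{0\}\cup[n]$ into $k+1$ blocks, delete the block containing $0$. The remaining $k$ blocks are nonempty, pairwise disjoint subsets of $[n]$, hence a Boolean antichain by Step~1. The two maps are mutually inverse, and both sides are unordered collections of blocks, so no factor $k!$ appears. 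This gives the count $S(n+1,k+1)$.

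The main obstacle is Step~1, specifically extracting pairwise disjointness from whatever form the paper's definition of Boolean antichain takes. If the definition only asks that the generated sublattice be Boolean of size $2^k$, one must rule out a configuration in which all pairwise meets equal a common nonzero element $M$. Such an antichain generates a Boolean lattice on $[M,\bigvee A]$ rather than one with bottom $\hat 0$. Already for $k=1$ the count $S(n+1,2)=2^n-1$ says that every nonzero singleton counts, which forces the bottom of the Boolean lattice to be $\hat 0$. So I will first verify, via the earlier general results, that the definition pins the bottom at $\hat 0$; the rest is routine.
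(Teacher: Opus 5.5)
Your Step 1 characterises the order dual of the paper's notion, not the paper's notion. The paper calls $C$ Boolean if $\hat 1\notin C$ and $(\wedge S)\vee(\wedge S')=\wedge(S\cap S')$ for all $S,S'\subseteq C$, with the convention $\wedge\varnothing=\hat 1$. By \Cref{lem:BoolBool}, this means $S\mapsto\wedge S$ is an anti-isomorphism onto the span of $C$ and $\hat 1$. So it is the top that is pinned, and taking $S=\{c\}$, $S'=\{c'\}$ gives $c\vee c'=\hat 1$, not $c\wedge c'=\hat 0$.

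Your join map $I\mapsto\bigvee_{i\in I}x_i$ with $\varphi_A(\varnothing)=\hat 0$ is not what the definition provides. No appeal to \Cref{sec: Generalities} converts one into the other, because in a general lattice the two notions differ. Concretely, in $\cB_n$ your claim ``Boolean iff nonempty and pairwise disjoint'' is false. The paper's example $\{\{2,4\},\{1,3,4\},\{1,2,3\}\}$ in $\cB_4$ is Boolean but not pairwise disjoint. Conversely, $\{\{1\},\{2\}\}$ in $\cB_4$ is pairwise disjoint but not Boolean, since $\{1\}\cup\{2\}\neq[4]=\wedge\varnothing$. So, as written, your bijection counts a different family, and the number comes out right only because $\cB_n$ is self-dual. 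Likewise, the obstacle you anticipate, pinning the bottom at $\hat 0$, is in the paper's setting the pinning of the top at $\hat 1$. That is built into the definition via $\wedge\varnothing=\hat 1$.

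The repair is short, and it turns your argument into the paper's. By \Cref{lem:distributive}, a subset $C\subseteq\cB_n\setminus\{[n]\}$ is a Boolean antichain if and only if $c\cup c'=[n]$ for all $c\neq c'$ in $C$. Equivalently, the complements $[n]\setminus c$ are nonempty and pairwise disjoint. Put differently, the order-reversing involution $X\mapsto[n]\setminus X$ carries the paper's Boolean antichains of size $k$ bijectively onto your families of $k$ nonempty pairwise disjoint sets. Compose this with your Step 2, with your extra point $0$ in the role of the paper's $n+1$. The result is exactly the paper's bijection $\{c_1,\dots,c_k\}\mapsto\{[n]\setminus c_1,\dots,[n]\setminus c_k,(\bigcap_{\ell}c_\ell)\cup\{n+1\}\}$ onto partitions of $[n+1]$ into $k+1$ blocks.
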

In \Cref{sec: Partition lattices}, we give a bijection from Boolean antichains of maximal size in the partition lattice to labelled trees. Thus, both are counted by Cayley's formula \cite{Cayley1897trees}.
\begin{theorem*}[\Cref{thm: main_thm_partition_lattice}]
    The number of Boolean antichains of size $n-1$ in the partition lattice $\cP_n$ is $n^{n-2}$.
\end{theorem*}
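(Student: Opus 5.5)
The plan is to build an explicit bijection between Boolean antichains of size $n-1$ in $\cP_n$ and spanning trees on the vertex set $[n]$, and then invoke Cayley's formula. I take as the working definition that an antichain $A$ is Boolean when the map $S\mapsto \bigvee S$ from subsets of $A$ to $L$ is an injective lattice homomorphism, with $\bigvee\emptyset=\bigwedge A$. If the paper's definition is phrased differently, the same steps should adapt.

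\textbf{Step 1: a Boolean antichain of size $n-1$ consists of atoms.} Let $A$ be a Boolean antichain of size $k$. A maximal chain of subsets $\emptyset\subsetneq\{a_1\}\subsetneq\{a_1,a_2\}\subsetneq\dots\subsetneq A$ is sent, by injectivity, to a strict chain of length $k$ in the interval $[\bigwedge A,\bigvee A]$. This is the size bound from \Cref{sec: Generalities}. Since $\cP_n$ is graded of rank $n-1$, the case $k=n-1$ forces $\bigwedge A=\hat0$ (the discrete partition) and $\bigvee A=\hat1$. It also forces every step of such a chain to be a cover. Taking the first step shows that every $a\in A$ covers $\hat0$. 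So each element of $A$ is an atom $\pi_{ij}$, the partition whose only non-singleton block is $\{i,j\}$. Hence $A$ corresponds to a graph $G_A$ on $[n]$ with $n-1$ edges.

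\textbf{Step 2: identify $G_A$ as a spanning tree.} For a set $S$ of edges, $\bigvee S$ is the partition of $[n]$ into the connected components of the subgraph with edge set $S$. Injectivity of $S\mapsto\bigvee S$ fails exactly when some edge lies on a cycle, because removing it does not change the components. So $G_A$ is a forest, and with $n-1$ edges it is a spanning tree.

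\textbf{Step 3: conversely, every spanning tree $T$ yields a Boolean antichain.} The edges of $T$ give $n-1$ distinct atoms, so they form an antichain. Joins of edge subsets are pairwise distinct because $T$ is a forest. It remains to check the meet condition $\bigvee S\wedge\bigvee S'=\bigvee(S\cap S')$.
\begin{itemize}
\item The inequality $\geq$ is automatic.
\item For $\leq$, suppose $x,y$ lie in the same component for $S$ and for $S'$.
\item By uniqueness of paths in $T$, the $S$-path and the $S'$-path from $x$ to $y$ are both the unique $T$-path.
\item Hence this path uses only edges of $S\cap S'$, which gives $\leq$.
\end{itemize}

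The two constructions are mutually inverse, so the number of Boolean antichains of size $n-1$ equals the number of labelled trees on $n$ vertices, namely $n^{n-2}$.

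I expect the main obstacle to be Step 1, together with matching the exact definition of Boolean antichain used in \Cref{sec: Generalities}. One must make sure that the definition really forces the bottom of the generated Boolean lattice to be $\bigwedge A$ and the join map to be injective. Then the rank argument pins everything down to atoms. If the definition instead only requires pairwise meets equal to $\bigwedge A$ plus distinct joins, the forest argument in Steps 2 and 3 still goes through essentially verbatim.
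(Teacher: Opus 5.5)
Your proof is correct for the order-dual notion, not for the paper's Boolean antichains, and the gap is exactly the point you flagged.

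In the paper, $C$ is Boolean if $\hat 1\notin C$ and $(\wedge S)\vee(\wedge S')=\wedge(S\cap S')$ for all $S,S'\subseteq C$, with $\wedge\emptyset=\hat 1$. So the generating map is $S\mapsto\wedge S$, an anti-isomorphism onto $\langle C,\hat 1\rangle_{\vee,\wedge}$ (\Cref{lem:BoolBool}), and distinct elements satisfy $c\vee c'=\hat 1$ (\Cref{lem:single elements join to top}). For $n\geq 4$ the join of two atoms of $\cP_n$ has $n-2\geq 2$ blocks, so the conclusion of your Step~1 is false for this notion. Running your rank argument on the chain $\hat 1>c_1>c_1\wedge c_2>\dots>\wedge C$ shows instead that a Boolean antichain of size $n-1$ consists of \emph{coatoms}, i.e.\ two-block partitions, as in \Cref{exm: partition lattice}. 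Your bijection therefore lands in a set disjoint from $\cC_{n-1}(\cP_n)$. You cannot simply dualize, since $\cP_n$ is not self-dual for $n\geq 4$: it has $\binom{n}{2}$ atoms but $2^{n-1}-1$ coatoms. Nor do Steps 2 and 3 go through verbatim on the coatom side, because a family of $n-1$ bipartitions of $[n]$ does not directly give a graph with $n-1$ edges.

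What is missing is a transfer between the atoms and the coatoms of a full-height Boolean sublattice.
\begin{itemize}
\item Given $C\in\cC_{n-1}(\cP_n)$, \Cref{lem:BoolBool} makes $B=\langle C,\hat 1\rangle_{\vee,\wedge}$ a Boolean sublattice of height $n-1$ with top $\hat 1$. By your rank argument its bottom is $\hat 0$ and its $n-1$ atoms are atoms of $\cP_n$. These atoms join-generate $B$ isomorphically, so they form an antichain of your type, and Step~2 produces a spanning tree $T$.
\item Conversely, Step~3 shows that $S\mapsto\bigvee S$ embeds $\mathscr{P}(E(T))$ as a Boolean sublattice $B_T$ of height $n-1$. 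Its coatoms are coatoms of $\cP_n$, and by \Cref{lem:BoolBool} they form a Boolean antichain in the paper's sense whose span is $B_T$.
\item The two maps are mutually inverse because a Boolean sublattice is determined by its atoms and also by its coatoms.
\end{itemize}
This transfer also removes an artefact of your convention $\bigvee\emptyset=\bigwedge A$, which excludes every singleton and hence wrongly gives $0$ for $n=2$.

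With this step added, your proof becomes the paper's. The map $\psi$ reads the tree off the atoms $A_C$ of the span, \Cref{lem: pi properties} is your Step~3, and $\varphi(T)=\cov(\hat 1)\cap\im(\pi_T)$ recovers $C$. Deducing ``forest'' from injectivity, rather than ``connected'' from $\bigvee A_C=\hat 1$, is an inessential variation.
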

By restricting this bijection to non-crossing labelled trees, we count Boolean antichains of maximal size in the lattice of non-crossing partitions.

\Cref{sec: Tamari lattices} is devoted to the Tamari lattice and constitutes the core effort of this work. 
In \Cref{sec: lattice congruence} we introduce a lattice congruence $q:\cT_{n+1}\to \cT_{n}$. After establishing the technical results in \Cref{sec: closing index sets}, we use the lattice congruence to construct the Boolean antichains of $\cT_{n+1}$ in a recursive way (\Cref{sec: recursive construction}). In turn, this allows us to count Boolean antichains of maximal size in the Tamari lattice (\Cref{sec: Maximal size}), yielding our main result.
\begin{theorem*}[\Cref{thm: main_thm_tamari_lattice}]
    The number of Boolean antichains of size $n$ in the Tamari lattice $\cT_{n+1}$ is given by the Catalan number $\mathtt{c}_n$.
\end{theorem*}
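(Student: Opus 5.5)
The plan is to argue by induction on $n$, using the lattice congruence $q\colon \cT_{n+1}\to\cT_n$ from \Cref{sec: lattice congruence} together with the recursive construction of \Cref{sec: recursive construction}. For small cases, I expect $\cT_2$ to have exactly one Boolean antichain of size $1$, namely the singleton consisting of its top element. That matches $\mathtt{c}_1=1$. We also need the general size bound of \Cref{sec: Generalities}, which says that $n$ is the maximal possible size of a Boolean antichain in $\cT_{n+1}$, since $\cT_{n+1}$ has rank $n$.

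\textbf{Step 1: projection.} Let $A\subseteq\cT_{n+1}$ be a Boolean antichain of size $n$. I will show that $q(A)$ minus at most one element is a Boolean antichain of size $n-1$ in $\cT_n$. The point is that $q$ is a lattice homomorphism, so it preserves the joins and meets that define the Boolean sublattice generated by $A$. Maximality of size forces exactly one element of $A$ to be ``absorbed'' by $q$: either it collapses onto the bottom, or it coincides with another image. The technical results of \Cref{sec: closing index sets} on closing index sets should make precise which element this is.

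\textbf{Step 2: fibres and a recursion.} Conversely, fix a Boolean antichain $A'$ of size $n-1$ in $\cT_n$. I will describe all Boolean antichains $A$ of size $n$ in $\cT_{n+1}$ with $A\mapsto A'$ under the map of Step~1. I expect the recursive construction to give the following. Lifting each element of $A'$ along the fibres of $q$ (which are intervals, since $q$ is a congruence) and adjoining one new element yields a number of choices. This number is governed by a statistic of $A'$, say $s(A')$, such as the number of elements of $A'$ whose corresponding tree has the relevant leaf on a certain right arm or in a certain index set. The recursion $\mathtt{c}_n=\sum_{i}\mathtt{c}_{i}\mathtt{c}_{n-1-i}$ is awkward to match directly. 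Instead, I would aim for a refined count: let $a_{n,j}$ be the number of Boolean antichains of size $n$ in $\cT_{n+1}$ with statistic $j$. I would then show
\[
a_{n+1,j}=\sum_{i\ge j-1} a_{n,i}.
\]
This is the ballot-number recursion, whose row sums are Catalan numbers. Equivalently, one can build an explicit bijection with Dyck paths or binary trees, where the statistic is the length of the last descent.

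\textbf{Main obstacle.} The hard step is the lifting in Step~2. One must show that the candidate lifts are exactly the Boolean antichains, i.e.\ that the generated sublattice is Boolean in the required nice way and not merely that the lifted sets are antichains. One must also show that distinct choices give distinct antichains. I would first try to prove the Boolean property on the level of index sets, using the closing-index-set lemmas. Joins in Tamari are computed by transitive closure of bracket vectors or interval relations, so the key check is that closure commutes with adjoining the new index. Injectivity should then follow from the fact that $q$ recovers $A'$. Once the refined recursion is established, summing over $j$ gives $\mathtt{c}_n$.
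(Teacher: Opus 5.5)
There is a genuine gap. Your architecture is the same as the paper's: project along $q$, lift, adjoin one new element, and count the admissible lifts. Your guessed refined recursion is even true for the right statistic. But the step that carries the weight is left as an expectation, and several concrete claims along the way are wrong.

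\textbf{Errors in the setup.} The unique size-$1$ Boolean antichain of $\cT_2$ is the bottom $((x_1x_2)x_3)$, not the top, since Boolean antichains may not contain $\hat 1$. The lattice $\cT_{n+1}$ is not graded of rank $n$: already $\cT_3$ is a pentagon, and the height of $\cT_{n+1}$ is $\binom{n+1}{2}$. The bound $n$ comes from $|\cov(\hat 1)|=n$. In Step~1, no element collapses to $\hat 0$ or coincides with another image. The $q$-classes are chains, so $q$ is injective on an antichain. Instead, exactly one element is sent to $\hat 1_{\cT_n}$, i.e.\ it lies in $[\hat 1_{\cT_{n+1}}]\setminus\{\hat 1\}$.

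\textbf{What is missing: the characterisation of the admissible lifts.}

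First, the lift is forced. Two elements of a Boolean antichain cannot both be non-top in their classes, since by \Cref{lem:banana} both would lie below the coatom $[\hat 1_{\cT_{n+1}}]_{n-1}$. Hence $A=f_n(A')\cup\{w\}$, and the only freedom is the height $h\in[0,n-1]$ of $w$ in $[\hat 1]$.

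Second, you need the criterion that this set is Boolean if and only if at most one $c\in A'$ has $k_w=h$ in $I_{f_n(c)}$. Necessity is short (\Cref{prop: BooleanAreValid}). Sufficiency (\Cref{prop: ValidAreBoolean}) is the real work. One must show that $(\wedge S)\vee(\wedge S')$ and $\wedge(S\cap S')$ have the same height in their common class. The paper obtains $k_{\wedge(S\cap S')}$ only by tracing indices back through a whole chain of projections $\pi_m(A)$, all of which must be Boolean. A one-level check that ``closure commutes with adjoining the new index'' does not see this dependence on lower levels.

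Third, the element with label $(i,j_i)$ has closing index set $[j_i+1,i+1]$. This turns the criterion into the condition $j_i\ge h$ for all $i\in[h,n-2]$.

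\textbf{How your counting would then go through.} Your statistic becomes $s(A')=|\{h\in[0,n-1] : j_i\ge h \text{ for all } i\in[h,n-2]\}|$. Choosing the $t$-th smallest admissible $h$ leaves exactly $t+1$ admissible values at the next level: the admissible values $\le h$, plus the new value $n$. This proves your recursion $a_{n+1,j}=\sum_{i\ge j-1}a_{n,i}$ with $a_{1,2}=1$. That generating-tree count would be a valid alternative to the paper's Segner convolution over the last label $(n-1,m)$. It only works once the three points above are established.
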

Our results lead us to formulate further questions regarding Boolean antichains in \Cref{sec: Perspectives}, which, although quite simple, have not yet been thoroughly studied.

\begin{notation*}
We denote the least (resp.\! greatest) element of $L$ by $\hat{0}_L=\hat{0}$ (resp.\! $\hat{1}_L=\hat{1}$). For $x$ and $y$ elements of $L$, the inequality $x< y$ is a \emph{covering relation} if $x\leq c\leq y$ implies $c = x$ or $c = y$. In that case we write $x\lessdot y$ and we say that $y$ \emph{covers} $x$. We write $\cov(y) = \{z\in L | z\lessdot y\}$ for the set of elements covered by $y$. We call elements that cover $\hat{0}$ \emph{atoms}, and elements that are covered by $\hat{1}$ \emph{coatoms}. The set of integers $\{1, \dots, n\}$ is denoted by $[n]$, and more generally, $[m, n]$ denotes $\{m, \dots, n\}$ whenever $m\leq n$ are integers. The power set of $C$ is denoted by $\mathscr{P}(C)$. 
\end{notation*}

\paragraph{\textbf{Acknowledgements}} This project is the result of a ``Dive into Research'', organized by Christian Stump at Ruhr University Bochum and supported by the DFG SPP 2458 ``Combinatorial Synergies''. In particular, the authors thank Karin Baur for supervising the project at the ``Dive into Research''. The topic was given by the fifth author. The first four authors developed their own proofs and wrote this paper under supervision of the fifth author and Karin Baur. The authors thank Benjamin Hackl for several comments on the paper, and in particular for drawing attention to \Cref{rem: genPartition}. The third author was partially funded by the Deutsche Forschungsgemeinschaft (DFG, German Research Foundation) -- Projektnummer 496500943.

\section{Generalities}\label{sec: Generalities}
Let $L$ be a finite lattice whose meet and join operations are denoted by $\wedge$ and $\vee$ respectively. A subset $C$ of $L$ is an antichain if it consists of pairwise incomparable elements of $L$. Following \cite{gottesman2025fractionally}, we say that an antichain $C$ is \emph{Boolean} if it does not contain the greatest element of $L$ and, for all subsets $S$ and $S'$ of $C$, we have 
\begin{equation}\label{eq: intersectivity}
    (\wedge S) \vee (\wedge S') = \wedge (S\cap S').
\end{equation}
An example of a Boolean antichain can be seen in \Cref{fig:Boolean} in \Cref{sec: Boolean lattices}.
Boolean antichains owe their name to the following lemma since the power set of $C$ is a Boolean lattice. For this, let $(\langle C, \hat{1}\rangle_{\vee, \wedge},  \wedge, \vee)$ denote the smallest sublattice of $L$ containing $C$ and $\hat{1}$. We call it the \emph{span} of $C$.
\begin{lemma}[\protect{\cite[Lemma 2.1.5]{gottesman2025fractionally}}]\label{lem:BoolBool}
An antichain is Boolean if and only if the map 
\begin{align*}
(\mathscr{P}(C), \cap, \cup)&\xrightarrow{\phi} (\langle C, \hat{1}\rangle_{\vee, \wedge},  \wedge, \vee)\\
S\quad\quad &\mapsto \quad\quad \wedge S
\end{align*}
is a lattice anti-isomorphism.
\end{lemma}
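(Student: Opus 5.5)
The plan is to verify both implications directly from the definitions, using the convention $\wedge\emptyset=\hat{1}$. With this convention $\phi(\emptyset)=\hat{1}$ and $\phi(\{c\})=c$ for $c\in C$.

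\emph{($\Rightarrow$)} Suppose $C$ is Boolean.
\begin{itemize}
\item[(a)] The identity $\phi(S\cup S')=\wedge(S\cup S')=(\wedge S)\wedge(\wedge S')$ holds in any lattice. The identity $\phi(S\cap S')=\phi(S)\vee\phi(S')$ is exactly \eqref{eq: intersectivity}. So $\phi$ is a lattice anti-homomorphism.
\item[(b)] The image of an anti-homomorphism is closed under $\wedge$ and $\vee$, so $\im\phi$ is a sublattice of $L$. It contains $\hat{1}=\phi(\emptyset)$ and every $c=\phi(\{c\})$. Hence $\im\phi\supseteq\langle C,\hat{1}\rangle_{\vee,\wedge}$. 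Conversely, every $\wedge S$ lies in the span, so $\phi$ is surjective onto the span.
\item[(c)] For injectivity, suppose $\wedge S=\wedge S'$ with $S\neq S'$, and by symmetry pick $c\in S\setminus S'$. Apply \eqref{eq: intersectivity} to the pair $S'$ and $\{c\}$:
\[
(\wedge S')\vee c=\wedge(S'\cap\{c\})=\wedge\emptyset=\hat{1}.
\]
On the other hand, $\wedge S'=\wedge S\le c$, so the left-hand side equals $c$. Thus $c=\hat{1}$, contradicting that a Boolean antichain does not contain $\hat{1}$. So $\phi$ is a bijective anti-homomorphism, hence an anti-isomorphism.
\end{itemize}

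\emph{($\Leftarrow$)} Suppose $\phi$ is a lattice anti-isomorphism onto the span.
\begin{itemize}
\item[(a)] It converts $\cap$ to $\vee$, which is precisely \eqref{eq: intersectivity}.
\item[(b)] If $\hat{1}\in C$, then $\phi(\{\hat{1}\})=\hat{1}=\phi(\emptyset)$, contradicting injectivity. So $\hat{1}\notin C$, and since $C$ is an antichain by hypothesis, $C$ is Boolean.
\end{itemize}

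The only step that is not pure bookkeeping is the injectivity argument in (c) of the forward direction. It hinges on applying \eqref{eq: intersectivity} to a singleton disjoint from $S'$ and then using the hypothesis $\hat{1}\notin C$. Everything else follows formally from the definitions.
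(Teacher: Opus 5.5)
Your proof is correct and complete. The paper does not prove this lemma; it imports it from \cite[Lemma 2.1.5]{gottesman2025fractionally}. So there is no in-text route to compare against, and your direct verification supplies the missing argument.

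The convention $\wedge\emptyset=\hat{1}$ is the right one. The paper uses it implicitly in \Cref{rem:Intersective is antichain} and \Cref{lem:single elements join to top}.

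Your injectivity step uses the same device as those two remarks: specialise \eqref{eq: intersectivity} so that $S\cap S'=\emptyset$, which forces the right-hand side to be $\hat{1}$. Your version, with $S'$ and $\{c\}$, is a mild strengthening. It shows $\wedge S'\not\le c$ whenever $c\in C\setminus S'$, and that is exactly what injectivity needs.

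Your forward direction never uses the antichain hypothesis, only $\hat{1}\notin C$ and \eqref{eq: intersectivity}. This is consistent with \Cref{rem:Intersective is antichain}.
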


\begin{remark}\label{rem:Intersective is antichain}
    Let $M$ be a subset of $L\setminus\{\hat{1}\}$ which satisfies \eqref{eq: intersectivity}. Then $M$ is an antichain and hence by definition a Boolean antichain.
    To see this, we assume that $M$ contains comparable elements $a\leq b$. Choosing $S=\{a\}$ and $S'=\{b\}$ yields
    $$
    (\wedge S)\vee(\wedge S')=b < \hat{1} = \wedge (S\cap S'),
    $$
    contradicting $M$ fulfilling \eqref{eq: intersectivity}.
\end{remark}

\begin{remark}\label{lem:single elements join to top}
    Let $c\neq c'$ be elements of a Boolean antichain $C$, then $c\vee c' = \hat{1}$. This follows from \eqref{eq: intersectivity} by choosing $S=\{c\}$ and $S'=\{c'\}$. 
\end{remark}

For a distributive lattice, the converse of \Cref{lem:single elements join to top} holds as well, thereby simplifying the condition for an antichain to be Boolean.
\begin{lemma}\label{lem:distributive}
    Let $L$ be a distributive lattice, and let $C$ be a subset of $L\setminus\{\hat{1}\}$. Then $C$ is a Boolean antichain if and only if $c\vee c' = \hat{1}$ for all $c\neq c' \in C$.
\end{lemma}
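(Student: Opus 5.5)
One direction is immediate: if $C$ is a Boolean antichain, then \Cref{lem:single elements join to top} gives $c\vee c'=\hat{1}$ for all distinct $c,c'\in C$. The work lies in the converse. So assume $C\subseteq L\setminus\{\hat 1\}$ and that pairwise joins of distinct elements equal $\hat 1$. By \Cref{rem:Intersective is antichain}, it is enough to verify \eqref{eq: intersectivity} for all $S,S'\subseteq C$; the antichain property then comes for free.

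Throughout I use the convention $\wedge\emptyset=\hat 1$. The key tool is distributivity in its finite form,
\begin{equation*}
\Bigl(\bigwedge_{s\in S} s\Bigr)\vee\Bigl(\bigwedge_{t\in S'} t\Bigr)=\bigwedge_{s\in S,\,t\in S'}(s\vee t),
\end{equation*}
which follows by induction on $|S|+|S'|$ from the binary distributive law.

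First I dispose of the degenerate cases. If $S$ or $S'$ is empty, both sides of \eqref{eq: intersectivity} equal $\hat 1$. Otherwise I split the pairs $(s,t)\in S\times S'$ into two kinds.
\begin{itemize}
\item If $s\neq t$, then $s\vee t=\hat 1$ by hypothesis, so these pairs contribute nothing to the meet.
\item If $s=t$, which happens exactly when $s\in S\cap S'$, then $s\vee t=s$.
\end{itemize}
The right-hand side of the displayed identity therefore reduces to $\bigwedge_{s\in S\cap S'} s=\wedge(S\cap S')$. This remains valid when $S\cap S'=\emptyset$: every pair then joins to $\hat 1$, and the empty meet is also $\hat 1$. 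This establishes \eqref{eq: intersectivity}.

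There is no real obstacle here. The only points needing care are the empty-set convention and the justification of the iterated distributive law. Note that the hypothesis $\hat 1\notin C$ is needed only to match the definition of a Boolean antichain, and it is also what lets us invoke \Cref{rem:Intersective is antichain}.
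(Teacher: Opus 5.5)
Your proof is correct and follows essentially the same route as the paper: expand $(\wedge S)\vee(\wedge S')$ by distributivity into $\bigwedge_{s\in S,\,t\in S'}(s\vee t)$, use the hypothesis to make every pair with $s\neq t$ equal to $\hat 1$ so only the terms with $s\in S\cap S'$ survive, and then invoke \Cref{rem:Intersective is antichain} to get the antichain property. Your explicit handling of the empty-meet convention and of the iterated distributive law simply makes precise what the paper leaves implicit.
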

\begin{proof}
    We only need to show that the condition is sufficient.
    Assume that $c\vee c'=\hat{1}$ for all $c\neq c'\in C$.
    By using the distributivity of $L$ and $c\vee c = c$, we obtain
    \[
        \left(\bigwedge S\right)\vee \left(\bigwedge S'\right) = \left( \bigwedge_{c \in S} c\right) \vee \left( \bigwedge_{c' \in S'} c' \right) = \bigwedge_{\genfrac{}{}{0pt}{}{c \in S}{c' \in S'}} ( c \vee c' ) \: = \bigwedge_{c\in S\cap S'}c \: =\bigwedge(S\cap S').
    \]
    This shows \eqref{eq: intersectivity}. By \Cref{rem:Intersective is antichain}, $C$ is a Boolean antichain.
\end{proof}

We conclude this section by providing bounds on the size of a Boolean antichain in terms of various combinatorial statistics related to the lattice $L$. For this, we recall that the \textit{height} $h(L)$ of a lattice $L$ is defined as the maximum number of consecutive cover relations in $L$.

\begin{lemma}\label{lem:size bounds}
    If $C$ is a Boolean antichain in $L$, then 
    \begin{enumerate}
        \item\label{item:size bounds cov} $|C| \leq | \cov(\hat{1})|$ and
        \item\label{item:size bounds height} $|C| \leq h(L)$.
    \end{enumerate}
\end{lemma}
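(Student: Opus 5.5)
Both bounds come from \Cref{lem:BoolBool}: the map $\phi\colon S\mapsto \wedge S$ is an anti-isomorphism from $(\mathscr{P}(C),\cap,\cup)$ onto the span $\langle C,\hat{1}\rangle_{\vee,\wedge}$. So the span contains a copy of the Boolean lattice on $C$, and we only have to transport this structure into $L$. Write $k=|C|$ and $C=\{c_1,\dots,c_k\}$. We may assume $k\geq 1$, since otherwise there is nothing to prove.

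For \eqref{item:size bounds height}, take the maximal chain $\emptyset\subsetneq\{c_1\}\subsetneq\{c_1,c_2\}\subsetneq\dots\subsetneq C$ in $\mathscr{P}(C)$. Applying $\phi$ gives
\[
\hat{1}=\wedge\emptyset > c_1 > c_1\wedge c_2 > \dots > \wedge C .
\]
All inequalities are strict because $\phi$ is injective. This is a chain of $k+1$ distinct elements of $L$. Refining it to a maximal chain yields at least $k$ consecutive cover relations, so $h(L)\geq k$.

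For \eqref{item:size bounds cov}, the goal is an injection $C\to\cov(\hat{1})$. For each $c\in C$ we have $c<\hat{1}$, since $C$ avoids $\hat{1}$, so we may choose a coatom $m_c$ with $c\leq m_c$. Suppose $m_c=m_{c'}$ for some $c\neq c'$. Then $c\vee c'\leq m_c<\hat{1}$. This contradicts \Cref{lem:single elements join to top}, which says $c\vee c'=\hat{1}$. Hence $c\mapsto m_c$ is injective and $|C|\leq|\cov(\hat{1})|$.

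Neither part has a genuine obstacle. The only points needing care are checking that the chain in \eqref{item:size bounds height} is strict, which is exactly injectivity of $\phi$, and that a coatom above each $c$ exists, which holds because $L$ is finite and $c\neq\hat{1}$.
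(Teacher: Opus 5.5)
Your proof is correct and follows the paper's argument. Part (1) is the paper's pigeonhole argument via \Cref{lem:single elements join to top}, which you make explicit by choosing a coatom above each element. Part (2) is the paper's observation that \Cref{lem:BoolBool} gives a Boolean sublattice of height $|C|$, which you realize concretely as a strict chain of $|C|+1$ elements in $L$.
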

\begin{proof}
    Inequality \eqref{item:size bounds cov} follows from \Cref{lem:single elements join to top} and the pigeon hole principle.
    Moreover, \Cref{lem:BoolBool} yields that $C$ generates a Boolean sublattice of height $|C|$. This shows inequality \eqref{item:size bounds height}.
\end{proof}
From here on, we write $\cC_k(L)$ to denote the set of Boolean antichains of size $k$ in a lattice $L$. The rest of the paper is devoted to describing this set for various lattices.

\section{Boolean lattices}\label{sec: Boolean lattices}
The Boolean lattice $\cB_n$ consists of all subsets of $[n]$, ordered by inclusion. Recall that the join of two subsets in a Boolean lattice is given by their union. As a first result, we use \Cref{lem:distributive} to count Boolean antichains in Boolean lattices. 
For this, we denote the set of partitions of $[n+1]$ into $k+1$ subsets by $\cP_{n+1,k+1}$.

\begin{theorem}\label{thm:main_thm_for_Boolean_lattices}
    The number of Boolean antichains of size $k$ in the Boolean lattice $\cB_n$ is given by the Stirling number of the second kind $S(n+1,k+1)$.
\end{theorem}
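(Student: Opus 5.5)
Since $\cB_n$ is distributive, \Cref{lem:distributive} reduces the statement to counting $k$-element subsets $C\subseteq \cB_n\setminus\{[n]\}$ with $c\cup c'=[n]$ for all $c\neq c'\in C$. I would pass to complements: for $c\in C$ put $\bar c=[n]\setminus c$. The condition $c\neq[n]$ says $\bar c\neq\emptyset$, and $c\cup c'=[n]$ says $\bar c\cap\bar c'=\emptyset$. So $C\mapsto \{\bar c : c\in C\}$ is a bijection from $\cC_k(\cB_n)$ onto the families of $k$ pairwise disjoint nonempty subsets of $[n]$. Such a family automatically has $k$ distinct members, because distinct nonempty disjoint sets differ; conversely distinct complements give distinct elements. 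The empty antichain ($k=0$) corresponds to the empty family, which matches $S(n+1,1)=1$.

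Next I would build a bijection from these families onto $\cP_{n+1,k+1}$. Given pairwise disjoint nonempty $B_1,\dots,B_k\subseteq[n]$, add the block
\[
B_0=\{n+1\}\cup\Bigl([n]\setminus\bigcup_{i=1}^k B_i\Bigr),
\]
which is nonempty because it contains $n+1$. Then $\{B_0,B_1,\dots,B_k\}$ is a partition of $[n+1]$ into $k+1$ blocks. For the inverse, take a partition of $[n+1]$ into $k+1$ blocks and delete the unique block containing $n+1$. The remaining $k$ blocks are nonempty, pairwise disjoint and contained in $[n]$. The two maps are clearly mutually inverse, because the block containing $n+1$ is determined by the others.

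Composing the two bijections gives $|\cC_k(\cB_n)|=|\cP_{n+1,k+1}|=S(n+1,k+1)$. No step is a serious obstacle. The only points needing care are the translation through \Cref{lem:distributive}, including the requirement that $\hat 1=[n]\notin C$ (which is exactly nonemptiness of the complements), and the edge cases $k=0$ and $k=n$. For $k=n$ the complements must be all $n$ singletons, giving the single partition with $S(n+1,n+1)=1$.
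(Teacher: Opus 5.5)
Your proposal is correct and is essentially the paper's proof. The paper also passes to complements via \Cref{lem:distributive} and \Cref{lem:single elements join to top}, then adjoins the block containing $n+1$. Its extra block $\bigl(\bigcap_{\ell} c_\ell\bigr)\cup\{n+1\}$ is exactly your $B_0$, since $[n]\setminus\bigcup_i([n]\setminus c_i)=\bigcap_i c_i$.
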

\begin{proof}
    We construct a bijection between the set $\cC_k(\cB_n)$ and the set $\cP_{n+1,k+1}$. For this, we number the blocks of a partition $\varrho = \{\varrho_1, \dots, \varrho_{k+1}\}$ such that the element $n+1$ is contained in the block $\varrho_{k+1}$. 
    For $i\neq j \in \{1, \dots, k\}$, we compute
    \[
    ([n]\setminus \varrho_i) \cup ([n] \setminus \varrho_j) = [n] = \hat{1}_{\cB_n}.
    \]
    Using \Cref{lem:distributive}, this shows that the map
    \begin{align*}
        \Phi_{n,k}: \cP_{n+1,k+1} &\rightarrow \cC_k(\cB_n) \\
        \varrho &\mapsto \{[n] \setminus \varrho_1, \dots, [n] \setminus \varrho_k \}
    \end{align*}
    is well-defined. On the other hand, let $C=\{c_1,\dots,c_k\}$ be a Boolean antichain in $\cB_n$. By \Cref{lem:single elements join to top}, the equality $c_i \cup c_j = [n]$ holds. This yields
    \[
        ([n] \setminus c_i) \cap ([n] \setminus c_j) = [n] \setminus (c_i \cup c_j) = \varnothing.
    \]
    Furthermore, we have
    \[
        [n+1]\setminus \Bigl( \bigcup_{\ell=1}^{k} ([n] \setminus c_\ell)\Bigr) = \Bigl( \bigcap_{\ell=1}^k c_\ell\Bigr) \cup \{n+1\}.
    \]
    This shows that the map
    \begin{align*}
    \Psi_{n,k}:\quad \cC_k(\cB_n) \quad &\rightarrow \qquad \cP_{n+1,k+1} \\
    \{c_1, \dots, c_k\} \; &\mapsto \; \left\{ [n] \setminus c_1, \dots, [n] \setminus c_k, \left(\bigcap_{\ell=1}^{k} c_\ell\right) \cup \{n+1\}\right\}
    \end{align*}
    is well-defined. A computation shows that $\Phi_{n,k}$ and $\Psi_{n,k}$ are inverses of each other. We conclude by using that partitions of $[n+1]$ into $k+1$ subsets are counted by the Stirling number of the second kind $S(n+1,k+1)$ (\cite{stirling1749differential} or \cite[Section 6.1]{Stirling_numbers} for an exposition).
\end{proof}

\begin{corollary}\label{Total number of Boolean antichains is Bell number}
    The total number of Boolean antichains in the Boolean lattice $\cB_n$ is given by the Bell number \cite{bell1934exponential}
    $$B_{n+1} := \sum_{i=1}^{n+1} S(n+1,i).$$
\end{corollary}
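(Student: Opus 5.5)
The plan is to sum \Cref{thm:main_thm_for_Boolean_lattices} over all possible sizes $k$ of a Boolean antichain in $\cB_n$. The sets $\cC_k(\cB_n)$ for different $k$ are pairwise disjoint, and every Boolean antichain lies in exactly one of them. Hence the total number of Boolean antichains is $\sum_k |\cC_k(\cB_n)| = \sum_k S(n+1,k+1)$.

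The first step is to pin down the range of $k$. A Boolean antichain has size at least $0$, since the empty set is an antichain not containing $\hat{1}$ and it satisfies \eqref{eq: intersectivity} trivially, with both sides equal to $\wedge\varnothing=\hat{1}$. By \Cref{lem:size bounds}, its size is at most $h(\cB_n)=n$. So $k$ ranges over $0,\dots,n$, and after substituting $i=k+1$ the sum becomes $\sum_{i=1}^{n+1} S(n+1,i)$. This equals $B_{n+1}$ by definition, and also by the standard fact that it counts all set partitions of $[n+1]$.

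The one point needing care is that \Cref{thm:main_thm_for_Boolean_lattices} must hold at the edge cases $k=0$ and $k=n$. For $k=0$, the bijection sends the empty antichain to the one-block partition $\{[n+1]\}$, matching $S(n+1,1)=1$. This is consistent with the formula for $\Psi_{n,0}$, reading the empty intersection as $[n]$. For $k=n$, both sides equal $1$: the coatoms give the unique antichain, corresponding to the partition into singletons. Alternatively, one can bypass the sum entirely: the maps $\Phi_{n,k}$ assemble into a single bijection between all Boolean antichains of $\cB_n$ and all partitions of $[n+1]$, whose number is $B_{n+1}$.
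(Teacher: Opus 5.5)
Your proposal is correct and is essentially the paper's argument: the paper gives no separate proof, and the corollary follows by summing \Cref{thm:main_thm_for_Boolean_lattices} over $k=0,\dots,n$, with the empty antichain giving the term $S(n+1,1)$. Only the case $k=0$ needs the convention $\wedge\varnothing=\hat{1}$, while $k=n$ is already covered by the theorem's proof (and any $k>n$ would contribute $S(n+1,k+1)=0$ anyway).
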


\begin{example}
    Consider the Boolean lattice $\cB_4$ and the antichain $C=\{\{2,4\},\{1,3,4\},\{1,2,3\}\}$ as shown on the left-hand side of \Cref{fig:Boolean}. It generates the sublattice colored in gray which is isomorphic to $\cB_3$. By \Cref{lem:BoolBool}, $C$ is a Boolean antichain. Its corresponding partition is $\Psi_{4,3}(C)=\{\{1,3\},\{2\},\{4\},\{5\}\}$. 
    
    On the other hand, the antichain $C'=\{\{2,4\},\{3,4\},\{1,2,3\}\}$, as shown on the right-hand side of \Cref{fig:Boolean}, is not a Boolean antichain. This can be verified by the fact that $\{2,4\}\vee\{3,4\} = \{2,3,4\}$ is not the greatest element (see \Cref{lem:single elements join to top}).
\end{example}

\begin{figure}
    \centering
    \begin{minipage}{0.49\textwidth}
    \resizebox{1\textwidth}{!}{
    \begin{tikzpicture}
        \fill[rounded corners = 3mm, fill=llgray] (0.25, 5.3) rectangle (1.75, 5.9);
        \fill[rounded corners = 3mm, fill=llgray] (4.25, 6.1) rectangle (5.75, 6.7);
        \fill[rounded corners = 3mm, fill=llgray] (5.35, 3.7) rectangle (6.65, 4.3);
        \fill[rounded corners = 3mm, fill=llgray] (6.5, 2.1) rectangle (7.5, 2.7);
        \fill[rounded corners = 3mm, fill=llgray] (2.5, 1.3) rectangle (3.5, 1.9);
        \fill[rounded corners = 3mm, fill=llgray] (1.35, 3.7) rectangle (2.65, 4.3);
        \fill[rounded corners = 3mm, fill=llgray] (3.05, 7.7) rectangle (4.95, 8.3);
        \fill[rounded corners = 3mm, fill=llgray] (3.6, -0.3) rectangle (4.4, 0.3);
        \node (0) at (4, 0) {$\emptyset$};
        \node (1) at (1, 2.4) {$\{1\}$};
        \node (2) at (3, 1.6) {$\{2\}$};
        \node (3) at (5, 1.6) {$\{3\}$};
        \node (4) at (7, 2.4) {$\{4\}$};
        \node (5) at (0, 4) {$\{1,2\}$};
        \node (6) at (2, 4) {$\{1,3\}$};
        \node (7) at (4, 4.8) {$\{1,4\}$};
        \node (8) at (4, 3.2) {$\{2,3\}$};
        \node (9) at (6, 4) {$\{2,4\}$};
        \node (10) at (8, 4) {$\{3,4\}$};
        \node (11) at (1, 5.6) {$\{1,2,3\}$};
        \node (12) at (3, 6.4) {$\{1,2,4\}$};
        \node (13) at (5, 6.4) {$\{1,3,4\}$};
        \node (14) at (7, 5.6) {$\{2,3,4\}$};
        \node (15) at (4, 8) {$\{1,2,3,4\}$};
        
        \draw (0) -- (1);
        \draw (0) -- (2);
        \draw (0) -- (3);
        \draw (0) -- (4);
        \draw (1) -- (5);
        \draw (1) -- (6);
        \draw (1) -- (7);
        \draw (2) -- (5);
        \draw (2) -- (8);
        \draw (2) -- (9);
        \draw (3) -- (6);
        \draw (3) -- (8);
        \draw (3) -- (10);
        \draw (4) -- (7);
        \draw (4) -- (9);
        \draw (4) -- (10);
        \draw (5) -- (11);
        \draw (5) -- (12);
        \draw (6) -- (11);
        \draw (6) -- (13);
        \draw (7) -- (12);
        \draw (7) -- (13);
        \draw (8) -- (11);
        \draw (8) -- (14);
        \draw (9) -- (12);
        \draw (9) -- (14);
        \draw (10) -- (13);
        \draw (10) -- (14);
        \draw (11) -- (15);
        \draw (12) -- (15);
        \draw (13) -- (15);
        \draw (14) -- (15);
        \draw[rounded corners = 3mm] (0.25, 5.3) rectangle (1.75, 5.9);
        \draw[rounded corners = 3mm] (4.25, 6.1) rectangle (5.75, 6.7);
        \draw[rounded corners = 3mm] (5.35, 3.7) rectangle (6.65, 4.3);
    \end{tikzpicture}}
    \end{minipage}
    \begin{minipage}{0.49\textwidth}
    \resizebox{1\textwidth}{!}{
    \begin{tikzpicture}
        \node (0) at (4, 0) {$\emptyset$};
        \node (1) at (1, 2.4) {$\{1\}$};
        \node (2) at (3, 1.6) {$\{2\}$};
        \node (3) at (5, 1.6) {$\{3\}$};
        \node (4) at (7, 2.4) {$\{4\}$};
        \node (5) at (0, 4) {$\{1,2\}$};
        \node (6) at (2, 4) {$\{1,3\}$};
        \node (7) at (4, 4.8) {$\{1,4\}$};
        \node (8) at (4, 3.2) {$\{2,3\}$};
        \node (9) at (6, 4) {$\{2,4\}$};
        \node (10) at (8, 4) {$\{3,4\}$};
        \node (11) at (1, 5.6) {$\{1,2,3\}$};
        \node (12) at (3, 6.4) {$\{1,2,4\}$};
        \node (13) at (5, 6.4) {$\{1,3,4\}$};
        \node (14) at (7, 5.6) {$\{2,3,4\}$};
        \node (15) at (4, 8) {$\{1,2,3,4\}$};
        
        \draw (0) -- (1);
        \draw (0) -- (2);
        \draw (0) -- (3);
        \draw (0) -- (4);
        \draw (1) -- (5);
        \draw (1) -- (6);
        \draw (1) -- (7);
        \draw (2) -- (5);
        \draw (2) -- (8);
        \draw (2) -- (9);
        \draw (3) -- (6);
        \draw (3) -- (8);
        \draw (3) -- (10);
        \draw (4) -- (7);
        \draw (4) -- (9);
        \draw (4) -- (10);
        \draw (5) -- (11);
        \draw (5) -- (12);
        \draw (6) -- (11);
        \draw (6) -- (13);
        \draw (7) -- (12);
        \draw (7) -- (13);
        \draw (8) -- (11);
        \draw (8) -- (14);
        \draw (9) -- (12);
        \draw (9) -- (14);
        \draw (10) -- (13);
        \draw (10) -- (14);
        \draw (11) -- (15);
        \draw (12) -- (15);
        \draw (13) -- (15);
        \draw (14) -- (15);
        \draw[rounded corners = 3mm] (0.25, 5.3) rectangle (1.75, 5.9);
        \draw[rounded corners = 3mm] (5.35, 3.7) rectangle (6.65, 4.3);
        \draw[rounded corners = 3mm] (7.35, 3.7) rectangle (8.65, 4.3);
    \end{tikzpicture}}
    \end{minipage}
    \caption{\centering Examples of a Boolean antichain and a Non-Boolean antichain in $\cB_4$}
    \label{fig:Boolean}
    \end{figure}

\begin{remark}\label{rem: boolean lattice recursion}
    Stirling numbers of the second kind satisfy the recursive formula \cite[Equation (6.3)]{Stirling_numbers}
    $$
        S(n+1,k+1)=S(n,k)+(k+1)S(n,k+1).
    $$
    One can check that $\cC_k(\cB_n)$ is in bijection with the set
    \begin{align*}
        \{\{d_1\cup\{n\}, \dots, d_{k-1} \cup \{n\}, [n-1]\}&\colon \{d_1,\dots,d_{k-1}\} \in \cC_{k-1}(\cB_{n-1})\} \;\cup \\ \{\{d_1\cup\{n\},\dots,d_k\cup \{n\}\}&\colon \{d_1,\dots,d_k\} \in \cC_k(\cB_{n-1})\}\;\cup \\ \{\{d_1\cup\{n\}, \dots,d_{i-1}\cup \{n\},d_i, d_{i+1}\cup \{n\},\dots, d_k \cup \{n\}\}&\colon i\in[k] \text{ and }\{d_1,\dots,d_k\} \in \cC_k(\cB_{n-1})\}.
    \end{align*}
    Hence, the quantity $|\cC_k(\cB_n)|$ satisfies the recursive formula as well. This yields an alternative proof of \Cref{thm:main_thm_for_Boolean_lattices}, without giving a bijection to $\cP_{n+1,k+1}$.

\end{remark}

\begin{remark}
    Let $n$ be a positive integer and let $D_n$ be the set of positive divisors of $n$. Ordered by divisibility, $D_n$ is a distributive lattice.
    Using the prime decomposition $n = p_1^{\ell_1}\cdots p_r^{\ell_r}$, we can generalize \Cref{thm:main_thm_for_Boolean_lattices} as follows \cite{garber2026combinatorial}:
    \[
        |\cC_k(D_n)| = \sum_{m=k}^r S(m,k)\cdot \sum_{1\leq i_1<\dots<i_m\leq r} \ell_{i_1}\cdots \ell_{i_m}. 
    \]
    Note that setting $\ell_1 = \dots = \ell_r = 1$ recovers $D_n = \cB_r$. Using a well-known identity for Stirling numbers of the second kind \cite[Equation (6.15)]{Stirling_numbers}, we also recover \Cref{thm:main_thm_for_Boolean_lattices} since
    \[
        |\cC_k(\cB_r)| = \sum_{m=k}^r S(m,k)\cdot \sum_{1\leq i_1<\dots<i_m\leq r} 1 = \sum_{m=k}^r S(m,k)\cdot \binom{r}{m}=S(r+1,k+1).
    \]
\end{remark}

\section{Partition lattices}\label{sec: Partition lattices}
We saw in \Cref{thm:main_thm_for_Boolean_lattices} that partitions of the set $[n+1]$ are in bijection with Boolean antichains of the Boolean lattice $\cB_n$. In this section, we consider a lattice structure on partitions. Recall that a partition $\varrho'$ \textit{refines} a partition $\varrho$ if every block of $\varrho$ is partitioned by a set of blocks in $\varrho'$. Ordered by refinement, the set of partitions of $[n]$, denoted by $\cP_n$, forms a lattice known as the \textit{partition lattice}.

We determine the number of Boolean antichains of size $n-1$ in $\cP_n$ by establishing a bijection between $C_{n-1}(\cP_n)$ and the set of \textit{labelled trees} with $n$ vertices, denoted by $\mathrm{Tr}_n$. This set consists of all undirected, connected, cycle-free graphs in which each vertex is labelled with a distinct integer from $1$ to $n$. An example of a labelled tree can be seen in \Cref{exm: partition lattice}.

Let $C$ be a Boolean antichain of size $n-1$ in $\cP_n$. By \Cref{lem:BoolBool}, the span of $C$, which we denote by $\text{span}(C)$, is a sublattice isomorphic to $\cB_{n-1}$. To associate a labelled tree with the antichain $C$, we consider the set
$$
A_C:=\{a \in \text{span}(C) \;|\; \hat{0}_{\cP_n}\lessdot a\}.
$$
This set consists of the atoms of the span of $C$ which are also atoms of the ambient lattice $\cP_n$. As such, a partition $a \in A_C$ consists of $n-2$ blocks of size $1$ and one block $e_a$ of size $2$.

\begin{lemma}\label{lem: psi}
Let $C$ be a Boolean antichain in $\cP_n$ of size $n-1$. Then the graph \[T := ([n], \{e_a | a\in A_C\})\] is a labelled tree.
In particular, the map
\begin{align*}
    \psi:\cC_{n-1}(\cP_n)&\rightarrow \mathrm{Tr}_n\\
    C &\mapsto ([n],\{e_a \; | \; a \in A_C\})
\end{align*}
is well-defined.
\end{lemma}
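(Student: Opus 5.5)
The plan is to show that $T$ has exactly $n-1$ edges and is connected. A connected graph on $n$ vertices with $n-1$ edges is a tree, which gives the claim. Both facts will come from comparing the Boolean sublattice $\text{span}(C)\cong\cB_{n-1}$ with the grading of $\cP_n$.

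\emph{Step 1: the span sits tightly in $\cP_n$.} Recall that $\cP_n$ is graded by $\operatorname{rk}(\varrho)=n-|\varrho|$, so $h(\cP_n)=n-1$. By \Cref{lem:BoolBool}, $\text{span}(C)$ is anti-isomorphic to $\mathscr{P}(C)$, hence isomorphic to $\cB_{n-1}$.
\begin{itemize}
\item The least element of the span is $\wedge C$ and its greatest element is $\hat{1}$.
\item A maximal chain of the span from $\wedge C$ to $\hat 1$ is a strict chain of length $n-1$ in $\cP_n$, so it raises the rank by at least $n-1$.
\item Since the total rank available is $n-1$, we get $\operatorname{rk}(\wedge C)=0$, i.e.\ $\wedge C=\hat{0}_{\cP_n}$, and every step of such a chain is a covering relation in $\cP_n$.
\end{itemize}
Every atom $a$ of the span lies on some maximal chain of the span. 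Hence $\hat 0_{\cP_n}\lessdot a$ in $\cP_n$. So $A_C$ is exactly the set of atoms of $\text{span}(C)$, and it has $n-1$ elements.

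\emph{Step 2: counting edges.} Each $a\in A_C$ is determined by its unique two-element block $e_a$, so distinct atoms give distinct edges. Thus $T$ has exactly $n-1$ edges.

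\emph{Step 3: connectivity.} In $\cB_{n-1}$ the join of all atoms is the top element. The span is a sublattice of $\cP_n$, so joins computed in the span agree with joins in $\cP_n$. Therefore $\bigvee_{a\in A_C} a=\hat 1_{\cP_n}$ in $\cP_n$. On the other hand, the join in $\cP_n$ of atoms $a$ with two-element blocks $e_a$ is the partition of $[n]$ into the vertex sets of the connected components of the graph $([n],\{e_a\})$. This is the standard description of joins in the partition lattice as transitive closure. Since this join is the one-block partition $\hat 1$, the graph $T$ is connected.

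I expect Step 1 to be the main point needing care. It is where we must make sure that atoms of the span really are atoms of $\cP_n$, rather than merely elements of rank at least one. The rank-counting argument above settles this, because the span already uses the full height of $\cP_n$. Once $T$ is a tree, $\psi$ is well-defined.
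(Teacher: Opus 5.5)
Your proof is correct and follows essentially the same route as the paper: connectivity comes from $\bigvee A_C=\hat 1_{\cP_n}$, and the tree property then follows from counting $n-1$ edges. (The paper only uses that the component partition of $T$ is an upper bound of every $a\in A_C$, rather than the full description of joins in $\cP_n$.) Your rank argument in Step 1, showing $\wedge C=\hat 0_{\cP_n}$ and that every atom of $\text{span}(C)$ is an atom of $\cP_n$, makes explicit what the paper leaves implicit when it asserts $\bigvee A_C=\hat 1_{\text{span}(C)}$ and that $T$ has $n-1$ edges.
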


\begin{proof}
Consider the partition $\varrho \in \cP_n$ given by the vertex labels of the connected components of $T$. Since each $e_a$ describes an edge in $T$, we have $a \leq \varrho$ for all $a\in A_C$. In particular, this gives $\varrho \geq \vee A_C$. Since the span of $C$ is a Boolean lattice, $\vee A_C = \hat{1}_{\text{span}(C)}$ holds. Using \Cref{lem:single elements join to top} yields $\hat{1}_{\text{span}(C)}=\hat{1}_{\cP_n}=\{[n]\}$. Overall, this gives $\varrho=\{[n]\}$, which shows that $T$ is connected. Since $T$ has $n-1$ edges and $n$ vertices, $T$ must be a tree.
\end{proof}

Conversely, an antichain can be constructed from a given labelled tree $T=(V(T),E(T))\in \mathrm{Tr}_n$. For this, we define
$$
    \cF(T):=\{F=(V(T),E_F)\; | \; E_F\subseteq E(T)\}
$$
as the set of sub-forests of $T$. Ordered by edge-inclusion, $\cF(T)$ forms a lattice isomorphic to $\cB_{n-1}$.

\begin{lemma}\label{lem: pi properties}
    Let $T \in \mathrm{Tr}_n$ and \begin{align*}
    \pi_T:\cF(T)&\rightarrow \cP_n\\
    F &\mapsto \{V_1,\dots,V_k\},
    \end{align*}
    where $V_1,\dots,V_k$ denote the sets of vertex labels of the connected components of $F$. Then $\pi_T$ is an injective lattice morphism.
\end{lemma}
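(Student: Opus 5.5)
We need to show that $\pi_T$ is injective and preserves both meets and joins. In $\cF(T)$, the meet of two forests $F,F'$ is the forest with edge set $E_F\cap E_{F'}$, and the join has edge set $E_F\cup E_{F'}$. In $\cP_n$, the join of two partitions is the finest partition coarser than both: its blocks are the connected components of the union of the relations. Equivalently, it is the partition into connected components of the graph whose edges join elements lying in a common block of either partition. The meet is the partition whose blocks are the nonempty pairwise intersections of blocks.

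\textbf{Injectivity.} Let $F\neq F'$, and suppose without loss of generality that some edge $e=\{u,v\}\in E_F\setminus E_{F'}$. Then $u,v$ lie in one block of $\pi_T(F)$. If they were also in one block of $\pi_T(F')$, there would be a path from $u$ to $v$ in $F'\subseteq T$ avoiding $e$. Together with $e$ this path closes a cycle in $T$, which is impossible. So $\pi_T(F)\neq\pi_T(F')$.

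\textbf{Joins.} The components of the graph $(V(T),E_F\cup E_{F'})$ are obtained by merging the components of $F$ and of $F'$ wherever they overlap. This is exactly the description of $\pi_T(F)\vee\pi_T(F')$ given above, so $\pi_T(F\vee F')=\pi_T(F)\vee\pi_T(F')$. This step uses nothing about trees.

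\textbf{Meets.} This is the main obstacle, because for general graphs the components of an intersection of edge sets can be strictly finer than the intersections of components. The inclusion $\pi_T(F\wedge F')\le \pi_T(F)\wedge\pi_T(F')$ is clear, since $F\wedge F'\le F,F'$ and $\pi_T$ is order-preserving. For the reverse inclusion, suppose $u,v$ lie in the same component of $F$ and in the same component of $F'$. Then there is a path from $u$ to $v$ in $F$ and a path from $u$ to $v$ in $F'$. Both are paths in the tree $T$, and $T$ has a unique $u$–$v$ path. Hence the two paths coincide, and all their edges lie in $E_F\cap E_{F'}$. Therefore $u,v$ are connected in $F\wedge F'$.

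Combining the three steps, $\pi_T$ is an injective lattice morphism.
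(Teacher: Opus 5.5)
Your proof is correct and follows essentially the same route as the paper. Meets are handled by uniqueness of paths in $T$, and injectivity by the fact that a subforest of a tree is determined by its components (your explicit cycle argument spells this out); you also supply the join argument the paper leaves to the reader, correctly noting that it uses nothing about trees.
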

\begin{proof}
    The injectivity of $\pi_T$ follows from the fact that every connected subgraph of a tree is uniquely determined by its vertex labels. Let $F$ and $F'$ be sub-forests of $T$. We now show that
    $$
    \pi_T(F\wedge F') = \pi_T(F)\wedge \pi_T(F')
    $$
    holds. To this end, we observe that $x, y\in [n]$ lie in the same block of $\pi_T(F)\wedge \pi_T(F')$ if and only if they lie in the same block of $\pi_T(F)$ and in the same block of $\pi_T(F')$. This is the case if and only if the vertices $x$ and $y$ are connected by a path both in $F$ and in $F'$. Because $F$ and $F'$ are sub-forests of the same tree $T$, this must be the unique path connecting $x$ and $y$. Hence, the above statement is equivalent to $x$ and $y$ lying in the same block of $\pi_T(([n],E_F\cap E_{F'}))$. Since $\cF(T)$ forms a Boolean lattice, this is $\pi_T(F\wedge F')$. The argument for $\pi_T(F\vee F') = \pi_T(F)\vee \pi_T(F')$ proceeds analogously and is therefore left to the reader.
\end{proof}

\begin{lemma}\label{lem: phi}
Let $T\in \mathrm{Tr}_n$ be a labelled tree. Then the set $\cov(\hat{1})\cap \im(\pi_T)$ is a Boolean antichain of size $n-1$ in $\cP_n$. In particular, the map 
\begin{align*}
    \varphi:\mathrm{Tr}_n&\rightarrow \cC_{n-1}(\cP_n)\\
    T &\mapsto \cov(\hat{1})\cap{\im(\pi_T)}
\end{align*}
is well-defined.
\end{lemma}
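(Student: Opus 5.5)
The plan is to identify $\cov(\hat{1})\cap\im(\pi_T)$ explicitly and then transport the Boolean structure of $\cF(T)$ into $\cP_n$ via \Cref{lem: pi properties}. For each edge $e\in E(T)$, let $F_e=(V(T),E(T)\setminus\{e\})$. These are the $n-1$ coatoms of $\cF(T)\cong\cB_{n-1}$. Since removing one edge from a tree yields exactly two components, $\pi_T(F_e)$ is a partition of $[n]$ with exactly two blocks. Hence $\pi_T(F_e)$ is covered by $\hat{1}_{\cP_n}=\{[n]\}$. Conversely, if $\pi_T(F)$ has two blocks, then $F$ has two components, so $F$ has $n-2$ edges and $F=F_e$ for some $e$. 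Note also that $\pi_T(T)=\{[n]\}$, so $\hat{1}\in\im(\pi_T)$ but is not a coatom. Therefore
\[
\cov(\hat{1})\cap\im(\pi_T)=\{\pi_T(F_e)\mid e\in E(T)\},
\]
and by injectivity of $\pi_T$ this set has exactly $n-1$ elements.

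Next we verify \eqref{eq: intersectivity}. Let $S,S'\subseteq E(T)$ and write $\cF_S=\{F_e\mid e\in S\}$. In the Boolean lattice $\cF(T)$, the meet of $\cF_S$ is the forest with edge set $E(T)\setminus S$, and the coatoms $F_e$ clearly satisfy
\[
\Bigl(\bigwedge \cF_S\Bigr)\vee\Bigl(\bigwedge\cF_{S'}\Bigr)=\bigwedge\cF_{S\cap S'}.
\]
This holds because $(E\setminus S)\cup(E\setminus S')=E\setminus(S\cap S')$, with the empty meet read as $T=\hat{1}_{\cF(T)}$. Since $\pi_T$ is a lattice morphism by \Cref{lem: pi properties}, it preserves finite nonempty meets and joins, so applying $\pi_T$ yields \eqref{eq: intersectivity} in $\cP_n$ for nonempty $S,S'$.

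The only care needed is at the boundary cases where $S$, $S'$ or $S\cap S'$ is empty. There the convention $\wedge\varnothing=\hat{1}_{\cP_n}$ must match $\pi_T(T)=\{[n]\}$, which was already noted. For example, if $S\cap S'=\varnothing$ with both nonempty, the join of the images is $\pi_T(T)=\hat{1}_{\cP_n}$, as required.

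Finally, the elements $\pi_T(F_e)$ lie in $\cP_n\setminus\{\hat{1}\}$, so \Cref{rem:Intersective is antichain} shows the set is a Boolean antichain of size $n-1$. Hence $\varphi$ is well-defined. I expect no real obstacle beyond making precise the correspondence between coatoms of $\cP_n$ in the image and single-edge deletions, together with the empty-meet convention.
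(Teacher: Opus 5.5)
Your proof is correct. It follows the same overall strategy as the paper: push the Boolean lattice $\cF(T)$ forward along the injective lattice morphism $\pi_T$, and identify $\cov(\hat{1})\cap\im(\pi_T)$ with the coatoms of $\im(\pi_T)$. However, you justify both key steps differently.

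The paper identifies the coatoms by a height argument. Since $\im(\pi_T)\cong\cB_{n-1}$ has height $n-1$, which equals $h(\cP_n)$, the coatoms of the sublattice must already be coatoms of $\cP_n$. It then concludes with \Cref{lem:BoolBool}, because $\im(\pi_T)$ is the span of these coatoms and is Boolean.

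You instead identify the coatoms combinatorially: deleting one edge of a tree leaves exactly two components, and conversely a two-component subforest has $n-2$ edges. You then verify \eqref{eq: intersectivity} directly, by computing $E(T)\setminus S$ in $\cF(T)$ and transporting it through $\pi_T$. You finish with \Cref{rem:Intersective is antichain} rather than \Cref{lem:BoolBool}.

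The paper's route is shorter and purely lattice-theoretic. It never needs to know what the coatoms look like, only that $h(\cP_n)=n-1$ and that $\pi_T(T)=\{[n]\}$, which it uses tacitly.

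Your route is more elementary and self-contained. It avoids both the cited \Cref{lem:BoolBool} and the height of $\cP_n$. It also gives the explicit description $\varphi(T)=\{\pi_T(F_e)\mid e\in E(T)\}$: the two-block partitions obtained by cutting single edges of $T$. The cost is that you must handle the empty-meet convention by hand, and your use of $\pi_T(T)=\{[n]\}=\hat{1}_{\cP_n}$ does this correctly.
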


\begin{proof}
    By \Cref{lem: pi properties}, $\im(\pi_T)$ is a Boolean lattice of height $n-1$. Since the ambient lattice $\cP_n$ also has height $n-1$, the coatoms of $\im(\pi_T)$ are coatoms of $\cP_n$. This shows that $\im(\pi_T)$ is the span of $\cov(\hat{1})\cap{\im(\pi_T)}$.
    Thus, by \Cref{lem:BoolBool}, the set $\cov(\hat{1})\cap{\im(\pi_T)}$ is a Boolean antichain of size $n-1$ in $\cP_n$.
\end{proof}

We are now able to prove the main result of this section.

\begin{theorem}\label{thm: main_thm_partition_lattice}
    The number of Boolean antichains of size $n-1$ in the partition lattice $\cP_n$ is $n^{n-2}$.
\end{theorem}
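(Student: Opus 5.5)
The plan is to show that the maps $\psi$ from \Cref{lem: psi} and $\varphi$ from \Cref{lem: phi} are mutually inverse. Cayley's formula $|\mathrm{Tr}_n| = n^{n-2}$ then gives the count. Both maps are already known to be well defined, so only the two compositions need checking.

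First I check $\psi\circ\varphi = \mathrm{id}$. Fix $T\in\mathrm{Tr}_n$ and set $C=\varphi(T)$. The proof of \Cref{lem: phi} shows that $\mathrm{span}(C)=\im(\pi_T)$. By \Cref{lem: pi properties}, this image is isomorphic to $\cF(T)\cong\cB_{n-1}$. Its atoms are the images $\pi_T$ of the one-edge forests, and these are exactly the partitions whose unique non-singleton block is an edge $\{x,y\}\in E(T)$. Such partitions are atoms of $\cP_n$. Conversely, any $a\in\im(\pi_T)$ that is an atom of $\cP_n$ is an atom of the span, because $\hat{0}_{\cP_n}=\pi_T(\text{edgeless forest})$ lies in the span. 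Hence $A_C=\{\pi_T(\{e\}) : e\in E(T)\}$, and $\psi(C)=T$.

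Next I check $\varphi\circ\psi=\mathrm{id}$; this is the main step. Let $C\in\cC_{n-1}(\cP_n)$ and $T=\psi(C)$. I aim to show $\mathrm{span}(C)=\im(\pi_T)$. Since $\mathrm{span}(C)\cong\cB_{n-1}$ has $n-1$ atoms, I first show that each atom of the span is an atom of $\cP_n$, so that $|A_C|=n-1$ and the edges $e_a$ are distinct. To do this, take a maximal chain in the span. It has length $n-1=h(\cP_n)$, and a chain in $\cP_n$ of length $h(\cP_n)$ must consist of covering relations from $\hat{0}$ to $\hat{1}$. Every atom of a Boolean lattice lies on such a maximal chain, so every atom of the span covers $\hat{0}_{\cP_n}$.

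Now $A_C$ generates $\mathrm{span}(C)$ under joins, since every element of a Boolean lattice is a join of atoms. The span is a sublattice, so these joins agree with joins in $\cP_n$. The same holds for $\im(\pi_T)$: by \Cref{lem: pi properties}, its elements are joins of the atom partitions $\pi_T(\{e_a\})=a$. So both lattices are the join-closure of the same set $A_C$ and therefore coincide. Finally, the coatoms of $\mathrm{span}(C)$ are exactly the elements of $C$, by \Cref{lem:BoolBool} (they are the meets of singleton subsets $\{c\}$). They are also coatoms of $\cP_n$, since the height argument above applies. Hence
\[
\varphi(T)=\cov(\hat{1})\cap\im(\pi_T)=\cov(\hat 1)\cap \mathrm{span}(C)=C.
\]
Conversely, every coatom of $\cP_n$ lying in the span is a coatom of the span, so the last equality holds exactly.

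The most delicate step is the height argument: that a Boolean sublattice of full height $n-1$ in the graded lattice $\cP_n$ has its atoms and coatoms among those of $\cP_n$. I would justify it using the gradedness of $\cP_n$ by rank $n-\#\text{blocks}$, which forces each step of a length-$(n-1)$ chain to raise the rank by exactly one.
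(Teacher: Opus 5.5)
Your proposal is correct and follows the same route as the paper. You show that $\psi$ and $\varphi$ are mutually inverse, using that $\mathrm{span}(C)$ and $\im(\pi_{\psi(C)})$ are Boolean sublattices of $\cP_n$ with the same atoms and hence coincide, and you conclude with Cayley's formula. You also spell out the gradedness argument showing that the atoms and coatoms of a full-height Boolean sublattice are atoms and coatoms of $\cP_n$, which the paper uses only implicitly (e.g.\ to get $|A_C|=n-1$ in \Cref{lem: psi}).
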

\begin{proof}
Let $C$ be in $\cC_{n-1}(\cP_n)$. We notice that the atoms of $\im(\pi_{\psi(C)})$ are precisely the atoms of the span of $C$. By \Cref{lem: pi properties} and \Cref{lem:BoolBool} respectively, both $\im(\pi_{\psi(C)})$ and the span of $C$ are Boolean lattices. Consequently, they must coincide. Thus, the antichain $C$ consists precisely of the coatoms of $\im(\pi_{\psi(C)})$. This shows that $\varphi\circ\psi (C) = C$.

Conversely, let $T = ([n], E)$ be in $\mathrm{Tr}_n$. The atoms of the sublattice $\im(\pi_T)$ are atoms of the ambient lattice $\cP_n$. As previously mentioned, they are partitions consisting of $n-2$ blocks of size $1$ and one block of size $2$. The connected subgraphs of $T$ with two vertices are precisely its edges. This shows that $\psi\circ \varphi(T)=T$.

Thus, we know that $\varphi$ and $\psi$ are inverses of each other. We conclude by pointing out that the number of labelled trees with $n$ vertices is $n^{n-2}$, as given by Cayley's formula \cite{Cayley1897trees}.
\end{proof}

\begin{remark}\label{rem: genPartition}
    It was pointed out to us that the result on partition lattices stated in \Cref{thm: main_thm_partition_lattice} can be generalized in the following way. Consider a graph $G$ with $n$ vertices. 
    By \cite[Section 1.7]{Oxley}, the set of \textit{flats of the graphic matroid} $\mathcal{L}(M(G))$ consists of all subgraphs $S$ of $G$ with the same vertex set, in which each connected component of $S$ is an induced subgraph of $G$. Ordered by edge-inclusion, $\mathcal{L}(M(G))$ forms a lattice \cite[Lemma 1.7.3]{Oxley}, which is called the \textit{lattice of flats} of $M(G)$. Setting $G$ to be the complete graph $K_n$, one retrieves the partition lattice $\cP_n$ as the lattice of flats $\mathcal{L}(M(G))$. We expect that the bijection given in \Cref{thm: main_thm_partition_lattice} can be generalized to a bijection between Boolean antichains of size $n-1$ in the lattice of flats $\mathcal{L}(M(G))$ for an arbitrary graph $G$ and labelled spanning trees of $G$.
\end{remark}

\begin{example}\label{exm: partition lattice}
    Consider the partition lattice $\cP_4$ and the Boolean antichain consisting of the partitions $\{\{1,2,3\},\{4\}\}$,$\{\{2\},\{1,3,4\}\}$ and $\{\{1\},\{2,3,4\}\}$. This Boolean antichain generates the Boolean sublattice of $\cP_4$ shown in \Cref{fig:Partition}. One can verify that this lattice is equal to the lattice given by $\im(\pi_T)$, and isomorphic to the lattice given by $\cF(T)$ for
    $$\begin{tikzpicture}
        \node (1) at (0,2) {$T = \quad 1$};
        \node (2) at (3,3) {$2$};
        \node (3) at (2,2) {$3$};
        \node (4) at (3,1) {$4$};

        \draw (1) -- (3);
        \draw (3) -- (2);
        \draw (3) -- (4);
    \end{tikzpicture}.$$
    This illustrates \Cref{lem: pi properties}. The edge set of $T$ can be read off from the blocks of size $2$ of the elements covering $\hat{0}$ in \Cref{fig:Partition}.
\end{example}

\begin{figure}
    \centering
    \begin{tikzpicture}
        \fill[rounded corners = 3mm, fill=llgray] (-1.2, 3.7) rectangle (1.2, 4.3);
        \fill[rounded corners = 3mm, fill=llgray] (1.8, 3.7) rectangle (4.2, 4.3);
        \fill[rounded corners = 3mm, fill=llgray] (4.8, 3.7) rectangle (7.2, 4.3);
        \node (0) at (3, 0) {$\{1\},\{2\},\{3\},\{4\}$};
        \node (1) at (0, 2) {$\{1,3\},\{2\},\{4\}$};
        \node (2) at (3, 2) {$\{1\},\{2,3\},\{4\}$};
        \node (3) at (6, 2) {$\{1\},\{2\},\{3,4\}$};
        \node (4) at (0, 4) {$\{1,2,3\},\{4\}$};
        \node (5) at (3, 4) {$\{2\},\{1,3,4\}$};
        \node (6) at (6, 4) {$\{1\},\{2,3,4\}$};
        \node (7) at (3, 6) {$\{1,2,3,4\}$};
        
        \draw (0) -- (1);
        \draw (0) -- (2);
        \draw (0) -- (3);
        \draw (1) -- (4);
        \draw (1) -- (5);
        \draw (2) -- (4);
        \draw (2) -- (6);
        \draw (3) -- (5);
        \draw (3) -- (6);
        \draw (4) -- (7);
        \draw (5) -- (7);
        \draw (6) -- (7);
    \end{tikzpicture}
    \caption{\centering The Boolean poset generated by a Boolean antichain in $\cP_4$}
    \label{fig:Partition}
\end{figure}

To conclude this section, we regard a specific subfamily of partitions.
Let $\varrho$ be a partition in $\cP_n$ such that there are no blocks $\varrho_1$ and $\varrho_2$ and $i<j<\ell<m \in [n]$ with $i,\ell \in \varrho_1$ and $j,m \in \varrho_2$. Then $\varrho$ is called a \textit{non-crossing partition}. We denote the set of all non-crossing partitions of the set $[n]$ by $\cP_n^{\text{NC}}$. Ordering $\cP_n^{\text{NC}}$ by refinement forms a lattice. It is a subposet of $\cP_n$, but not a sublattice. Similarly, we recall that a labelled tree $T$ is called a \textit{non-crossing labelled tree} if there are no edges $e_1=\{i,\ell\}$ and $e_2=\{j,m\}$ with $i<j<\ell<m \in [n]$.

We now show that the bijective maps $\varphi$ and $\psi$, defined in \Cref{lem: phi} and \Cref{lem: psi} respectively, restrict to bijective maps between Boolean antichains in the lattice of non-crossing partitions $\cP_n^{\text{NC}}$ and non-crossing labelled trees, thereby enumerating $\cC_{n-1}(\cP_n^{\text{NC}})$.
\begin{corollary}
    The number of Boolean antichains of size $n-1$ in the lattice of non-crossing partitions $\cP_n^{\text{NC}}$ is $\frac{1}{2n-1} \binom{3n-3}{n-1}.$
\end{corollary}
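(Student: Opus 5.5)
Our approach is to show that $\psi$ and $\varphi$ restrict to mutually inverse bijections between $\cC_{n-1}(\cP_n^{\text{NC}})$ and the set of non-crossing labelled trees on $[n]$. The enumeration $\frac{1}{2n-1}\binom{3n-3}{n-1}$ of non-crossing trees is classical (Dulucq--Penaud, Noy), and we simply invoke it. Two facts about $\cP_n^{\text{NC}}$ will be used throughout. First, its meet coincides with the meet in $\cP_n$, because the intersection of two non-crossing partitions is non-crossing. Second, its join is the non-crossing closure of the join in $\cP_n$; in particular it lies weakly above the $\cP_n$-join. Both lattices have the same bottom, top, atoms and height $n-1$.

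\textbf{From trees to antichains.} Let $T$ be a non-crossing tree. Every sub-forest $F$ of $T$ is non-crossing, and the partition $\pi_T(F)$ is non-crossing: if two components crossed, the paths realizing them inside the plane non-crossing drawing of $T$ on points in convex position would have to intersect in an edge crossing, which is impossible. Hence $\im(\pi_T)\subseteq\cP_n^{\text{NC}}$. By \Cref{lem: pi properties}, $\im(\pi_T)$ is closed under the $\cP_n$-meet and $\cP_n$-join. Since $\cP_n^{\text{NC}}$ is a subposet with the same meet, and the $\cP_n$-join of two elements of $\im(\pi_T)$ already lies in $\cP_n^{\text{NC}}$ and is therefore also their $\cP_n^{\text{NC}}$-join, the image $\im(\pi_T)$ is a Boolean sublattice of $\cP_n^{\text{NC}}$ of height $n-1$. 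The argument of \Cref{lem: phi} then applies verbatim. Its coatoms are coatoms of $\cP_n^{\text{NC}}$, and by \Cref{lem:BoolBool} they form a Boolean antichain in $\cP_n^{\text{NC}}$ of size $n-1$.

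\textbf{From antichains to trees.} Let $C\in\cC_{n-1}(\cP_n^{\text{NC}})$. Its span is Boolean of height $n-1$, and hence maximal chains of the span are maximal chains of $\cP_n^{\text{NC}}$. Consequently the atoms of the span are atoms of $\cP_n^{\text{NC}}$, so we obtain $n-1$ edges $e_a$. We need three things.
\begin{enumerate}
    \item \emph{Connectivity.} As in \Cref{lem: psi}, the join of the atoms in $\cP_n^{\text{NC}}$ is $\hat{1}$. However, the $\cP_n^{\text{NC}}$-join is only the non-crossing closure, so connectivity of $T$ does not follow directly.
    \item \emph{Non-crossing.} The edges $e_a$ must be pairwise non-crossing.
    \item \emph{Identification of the span.} The span of $C$ must equal $\im(\pi_T)$.
\end{enumerate}

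\textbf{Main obstacle and plan for it.} We expect the obstacle to be the failure of $\cP_n^{\text{NC}}$ to be a sublattice of $\cP_n$. We handle it by using the Boolean structure. Suppose two atoms $a,b$ had crossing edges $\{i,\ell\}$ and $\{j,m\}$. Then $a\vee b$ in $\cP_n^{\text{NC}}$ would be the block $\{i,j,\ell,m\}$, which has rank $3$ rather than rank $2$. But in a Boolean sublattice of height $n-1$ sitting inside a graded lattice of rank $n-1$, the join of two atoms must have rank $2$. This contradiction shows the edges are non-crossing.

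More generally, we argue by induction along ranks that each element of the span at rank $r$ is a partition with $n-r$ blocks, and that it equals the $\cP_n$-join of the atoms below it. For non-crossing edge sets the $\cP_n$-join is already non-crossing, so no closure occurs. It follows that $n-1$ non-crossing edges whose join is $\hat{1}$ with rank exactly $n-1$ form a connected graph, hence a tree. The rank count also forces the join of any $r$ of these atoms to have exactly $n-r$ blocks, which rules out cycles.

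\textbf{Conclusion.} Once the span of $C$ is shown to be $\im(\pi_T)$, the computations in \Cref{thm: main_thm_partition_lattice} show that $\varphi\circ\psi=\mathrm{id}$ and $\psi\circ\varphi=\mathrm{id}$ on the restricted sets. Combined with the count of non-crossing trees, this gives the formula.
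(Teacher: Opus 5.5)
Your proposal is correct and follows essentially the paper's route. You restrict $\psi$ and $\varphi$, exclude crossing atoms of the span because their join in $\cP_n^{\text{NC}}$ would have rank $3$ instead of $2$, show that $\im(\pi_T)\subseteq\cP_n^{\text{NC}}$ is a Boolean sublattice for a non-crossing tree $T$, and invoke Noy's count. You are more explicit than the paper on two points: why connectivity and the identification of the span survive the passage to $\cP_n^{\text{NC}}$ (the $\cP_n$-join of pairwise non-crossing edges is already non-crossing, which the paper leaves implicit in ``similarly to \Cref{lem: psi}''), and you argue non-crossingness of $\pi_T(F)$ geometrically via the convex drawing, whereas the paper locates an explicit pair of crossing edges combinatorially.
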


\begin{proof}
First, let $C$ be a Boolean antichain of size $n-1$ in $\cP_n^{NC}$. Similarly to \Cref{lem: psi}, we consider the atoms of the span of $C$, which again consist of $n-2$ blocks of size $1$ and one block $e_a$ of size $2$. If the edges which correspond to the blocks $e_a$ and $e_b$ were to cross, the join $\varrho'$ of the corresponding partitions would be a non-crossing partition consisting of $n-4$ blocks of size $1$ and one block of size $4$. The height $h(\varrho)$ of a partition $\varrho=\{\varrho_1,\dots,\varrho_k\}$ in $\cP_n^{NC}$ is given by $h(\varrho)=n-k$ \cite{Kreweras}. Since the span of $C$ is a Boolean sublattice of height $n-1$, $\varrho'$ must be of height $2$. This yields a contradiction, showing that $\psi(C)$ is a non-crossing labelled tree.

On the other hand, let $F$ be a subforest of $T$ and let $\varrho = \pi_T(F)$. Suppose that $\varrho$ is a crossing partition, that is, there exist $i< j< \ell< m$ in $[n]$ and blocks $\varrho_1$ and $\varrho_2$ of $\varrho$ such that $i, \ell\in \varrho_1$ and $j, m\in \varrho_2$. We show that $T$ is a crossing labelled tree. 
Since $i$ and $\ell$ lie in the same connected component of $F$, there exists a path $p_1$ in $F$ from $i$ to $\ell$. Analogously, there exists a path $p_2$ from $j$ to $m$. We observe that $j$ lies in the interval $[i, \ell]$, while $m$ lies in $[n]\setminus[i,\ell]$. Thus, the path $p_2$ must contain an edge 
\[\{j',m'\} \text{ with } j'\in [i, \ell] \text{ and } m'\in[n]\setminus[i, \ell].\]
We assume that $m'$ lies in the interval $[\ell,n]$. The case where $m'$ lies in the interval $[1,i]$ can be treated in a similar way and is left to the reader. A visual sketch is shown in \Cref{fig:Crossing Tree}, where dashed lines represent paths and bold lines represent edges. Under this assumption, $\ell$ lies in the interval $[j', m']$, while $i$ lies in $[n]\setminus[j', m']$. Hence, there exists an edge 
\[\{i',\ell'\} \text{ with } \ell'\in [j', m'] \text{ and } i'\in[n]\setminus[j', m']\]
in $p_1$. Again, we may assume that $i'$ lies in the interval $[1,j']$, leaving the case where $i'$ lies in $[m',n]$ to the reader. Since $\rho_1$ and $\rho_2$ are disjoint, the strict inequalities $i'< j'< \ell'< m'$ hold, making $T$ a crossing labelled tree.

\usetikzlibrary{decorations.pathmorphing}
\begin{figure}
\centering
\begin{minipage}{0.3\textwidth}
\begin{tikzpicture}[ampersand replacement=\&]
  \node (1) at (90:2cm) {$1$};
  \node (i') at (54:2cm) {$i'$};
  \node (i) at (18:2cm) {$i$};
  \node (j) at (-18:2cm) {$j$};
  \node (j') at (-54:2cm) {$j'$};
  \node (l) at (-90:2cm) {$\ell$};
  \node (l') at (-126:2cm) {$\ell'$};
  \node (m) at (-162:2cm) {$m$};
  \node (m') at (-198:2cm) {$m'$};
  \node (n) at (110:2cm) {$n$};

  \draw [domain=60:84] plot ({2 * cos(\x)}, {2 * sin(\x)}); 
  \draw [domain=24:48] plot ({2 * cos(\x)}, {2 * sin(\x)}); 
  \draw [domain=-11:12] plot ({2 * cos(\x)}, {2 * sin(\x)}); 
  \draw [domain=312:336] plot ({2 * cos(\x)}, {2 * sin(\x)}); 
  \draw [domain=276:300] plot ({2 * cos(\x)}, {2 * sin(\x)}); 
  \draw [domain=240:264] plot ({2 * cos(\x)}, {2 * sin(\x)}); 
  \draw [domain=204:228] plot ({2 * cos(\x)}, {2 * sin(\x)}); 
  \draw [domain=168:192] plot ({2 * cos(\x)}, {2 * sin(\x)}); 
  \draw [domain=116:156] plot ({2 * cos(\x)}, {2 * sin(\x)}); 

  \draw[decorate, dashed] (i) to[bend left] (i');
  \draw[decorate, dashed] (l') to[bend left] (l);
  \draw[decorate, dashed] (j) to[bend right] (j');
  \draw[decorate, dashed] (m') to[bend left] (m);
  
  \draw[line width=1.5pt] (j') -- (m');
  \draw[line width=1.5pt] (i') -- (l');
\end{tikzpicture}
    \end{minipage}
    \begin{minipage}{0.3\textwidth}
    \begin{tikzpicture}[ampersand replacement=\&]
  \node (1) at (90:2cm) {$1$};
  \node (i') at (54:2cm) {$i'$};
  \node (i) at (18:2cm) {$i$};
  \node (j) at (-18:2cm) {$j$};
  \node (j') at (-54:2cm) {$j'$};
  \node (l) at (-90:2cm) {$\ell$};
  \node (l') at (-126:2cm) {$\ell'$};
  \node (m) at (-162:2cm) {$m$};
  \node (m') at (72:2cm) {$m'$};
  \node (n) at (110:2cm) {$n$};
  
  \draw [domain=78:84] plot ({2 * cos(\x)}, {2 * sin(\x)}); 
  \draw [domain=24:48] plot ({2 * cos(\x)}, {2 * sin(\x)}); 
  \draw [domain=-11:12] plot ({2 * cos(\x)}, {2 * sin(\x)}); 
  \draw [domain=312:336] plot ({2 * cos(\x)}, {2 * sin(\x)}); 
  \draw [domain=276:300] plot ({2 * cos(\x)}, {2 * sin(\x)}); 
  \draw [domain=240:264] plot ({2 * cos(\x)}, {2 * sin(\x)}); 
  \draw [domain=204:228] plot ({2 * cos(\x)}, {2 * sin(\x)}); 
  \draw [domain=116:192] plot ({2 * cos(\x)}, {2 * sin(\x)}); 
  \draw [domain=60:66] plot ({2 * cos(\x)}, {2 * sin(\x)}); 

  \draw[decorate, dashed] (i) to[bend left] (i');
  \draw[decorate, dashed] (l') to[bend left] (l);
  \draw[decorate, dashed] (j) to[bend right] (j');
  \draw[decorate, dashed] (m') to[out=215, in=0] (m);
  
  \draw[line width=1.5pt] (j') -- (m');
  \draw[line width=1.5pt] (i') -- (l');
\end{tikzpicture}
    \end{minipage}
\caption{\centering Sketches of the construction of crossing edges}
\label{fig:Crossing Tree}
\end{figure}
\usetikzlibrary{patterns}

Let $T$ now be a non-crossing labelled tree. The discussion above shows that $\im(\pi_T)$ is not only a Boolean sublattice of height $n-1$ of $\cP_n$, but also of $\cP_n^{NC}$. Hence, we can apply the same argument as in the proof of \Cref{lem: phi} to see that $\varphi(T) \in  \cC_{n-1}(\cP_n^{NC})$.
Since $\psi$ and $\varphi$ are inverses of each other, the result follows from \cite{Noy1998Enumeration}, where the cardinality of non-crossing labelled trees is given.
\end{proof}

\section{Tamari lattices}\label{sec: Tamari lattices}
In this section, the Tamari lattice of rank $n\in \mathbb{N}^+$, denoted by $\cT_n$, will be described in terms of parenthesized expressions on $n+1$ letters $x_1, \dots, x_{n+1}$, following \cite{tamari}. We also refer to parenthesized expressions in $\cT_n$ as \emph{words}.
A parenthesized \emph{subexpression} is a non-empty expression consisting of either a single letter or a pair of parentheses enclosing two parenthesized subexpressions. A word can uniquely be written as \[w = (e_1(\dots (e_{l-1}(e_lx_{n+2}))\dots)\]where $e_1\dots e_l$ are subexpressions of $w$. 
Cover relations are given by reparenthesizations of subexpressions following the rule
\begin{equation}\label{eq:coverRel}
(\dots((xy)z)\dots) \lessdot (\dots (x(yz))\dots).
\end{equation}
The cardinality of $\cT_n$ is given by the $n^{th}$ Catalan number
\[
    \mathtt{c}_n = \frac{1}{n+1}\binom{2n}{n}.
\]
We give a recursive construction of Boolean antichains in $\cT_{n+1}$ using a lattice congruence. Moreover, we describe $\cC_{n-1}(\cT_n)$ by establishing a bijection with a combinatorial object introduced by us.

\subsection{A lattice congruence}\label{sec: lattice congruence}
We define a map $q_n : \cT_{n+1} \to \cT_{n}$ by setting $q_n(w)$ to be the parenthesized expression obtained by removing the letter $x_{n+2}$ from $w$ and deleting the pair of redundant parentheses. The boxed entries on the right-hand side of \Cref{fig:HasseT3_4} illustrate the preimage of the element $((x_1x_2)(x_3x_4))$ under $q_3$. We omit the subscript of $q_n$ if it is clear from the context.

\begin{figure}
    \centering
    \begin{minipage}{0.3\textwidth}
    \resizebox{1\textwidth}{!}{
    \begin{tikzpicture}
        \node (0) at (2, 10.5) {$(x_1(x_2(x_3x_4)))$};
        \node (1) at (0, 7.5) {$(x_1((x_2x_3)x_4))$};
        \node (2) at (4, 6) {$((x_1x_2)(x_3x_4))$};
        \node (3) at (0, 4.5) {$((x_1(x_2x_3))x_4)$};
        \node (4) at (2, 1.5) {$(((x_1x_2)x_3)x_4)$};
        
        \draw (0) -- (1);
        \draw (0) -- (2);
        \draw (1) -- (3);
        \draw (2) -- (4);
        \draw (3) -- (4);
        \draw[rounded corners = 3mm] (2.5, 5.7) rectangle (5.5, 6.3);
    \end{tikzpicture}}
    \end{minipage}
    \hfill
    \begin{minipage}{0.6\textwidth}
    \resizebox{1\textwidth}{!}{
    \begin{tikzpicture}
        \draw[rounded corners = 3mm](6, 4) rectangle (9, 11);
        \node (0) at (4.5, 13.5) {\footnotesize$(x_1(x_2(x_3(x_4 x_5))))$};
        \node (1) at (4.5, 10.5) {\footnotesize$(x_1(x_2((x_3x_4)x_5)))$};
        \node (2) at (7.5, 10.5) {\footnotesize$((x_1x_2)(x_3(x_4x_5)))$};
        \node (3) at (0, 9) {\footnotesize$(x_1((x_2x_3)(x_4x_5)))$};
        \node (4) at (3, 9) {\footnotesize$(x_1((x_2(x_3x_4))x_5))$};
        \node (5) at (1.5, 7.5) {\footnotesize$(x_1(((x_2x_3)x_4)x_5))$};
        \node (6) at (7.5, 7.5) {\footnotesize$((x_1x_2)((x_3x_4)x_5))$};
        \node (7) at (0, 6) {\footnotesize$((x_1(x_2x_3))(x_4x_5))$};
        \node[fill = white] (8) at (6, 6) {\footnotesize$((x_1(x_2(x_3x_4)))x_5)$};
        \node (9) at (4.5, 4.5) {\footnotesize$((x_1((x_2x_3)x_4))x_5)$};
        \node (10) at (7.5, 4.5) {\footnotesize$(((x_1x_2)(x_3x_4))x_5)$};
        \node (11) at (0, 3) {\footnotesize$(((x_1x_2)x_3)(x_4x_5))$};
        \node (12) at (3, 3) {\footnotesize$(((x_1(x_2x_3))x_4)x_5)$};
        \node (13) at (3, 0) {\footnotesize$((((x_1x_2)x_3)x_4)x_5)$};
        
        \draw (0) -- (1);
        \draw (0) -- (2);
        \draw (0) -- (3);
        \draw (1) -- (4);
        \draw (1) -- (6);
        \draw (2) -- (11);
        \draw (2) -- (6);
        \draw (3) -- (5);
        \draw (3) -- (7);
        \draw (4) -- (5);
        \draw (4) -- (8);
        \draw (5) -- (9);
        \draw (6) -- (10);
        \draw (7) -- (11);
        \draw (7) -- (12);
        \draw (8) -- (9);
        \draw (8) -- (10);
        \draw (9) -- (12);
        \draw (10) -- (13);
        \draw (11) -- (13);
        \draw (12) -- (13);
        
    \end{tikzpicture}}
    \end{minipage}
    \caption{\centering Hasse diagrams of $\cT_3$ and $\cT_4$, illustrating the preimage of the element $((x_1, x_2)(x_3, x_4))$ under $q_3$}
    \label{fig:HasseT3_4}
\end{figure}

Following \cite[Section 2]{Reading}, we say that an equivalence relation $\cR$ on a lattice $L$ is a \emph{lattice congruence} if and only if it satisfies the following three conditions.
\begin{enumerate}[itemsep = 0pt]
    \item \label{item:latCong1} For all $a\in L$, the equivalence classes $[a]_\cR$ are intervals.
    \item \label{item:latCong2} The map $\topmap \colon L \to L$ defined by $\topmap(a) := \max [a]_\cR$ is order-preserving.
    \item \label{item:latCong3} The map $\botmap \colon L \to L$ defined by $\botmap(a) := \min [a]_\cR$ is order-preserving.
\end{enumerate}

Defining $[w] = q_n^{-1}(\{q_n(w)\})$ to be the equivalence class of a word $w$ yields an equivalence relation on $\cT_{n+1}$. The aim of this subsection is to prove that this equivalence relation is a lattice congruence $\equiv$ (see \Cref{prop: LatticeCongruence}), which will provide us with a useful isomorphism $ \cT_{n+1} /_\equiv \longleftrightarrow \cT_n$.
We begin by explicitly describing these equivalence classes.

\begin{lemma}\label{lem:chain}
    Let $w$ be an element of $\cT_{n+1}$. Then $[w]$ is a chain (\emph{i.e.} totally ordered).
\end{lemma}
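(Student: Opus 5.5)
The plan is to describe the fibre $[w]=q_n^{-1}(\{u\})$ explicitly for $u:=q_n(w)\in\cT_n$, and then to check that its elements form a sequence of cover relations of type \eqref{eq:coverRel}. Write $u$ along its right spine:
\[u=(e_1(e_2(\dots(e_{l-1}e_l)\dots))),\]
where $e_1,\dots,e_l$ are subexpressions of $u$ and $e_l$ is the single letter $x_{n+1}$. For $i\in[l]$ let $s_i$ denote the right-spine subexpression $(e_i(\dots(e_{l-1}e_l)\dots))$. So $s_1=u$, $s_l=x_{n+1}$, and $s_i=(e_is_{i+1})$ for $i<l$.

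\textbf{Step 1: classify the preimages.} Let $v\in\cT_{n+1}$ with $q_n(v)=u$. In $v$, the last letter $x_{n+2}$ is the right member of a unique pair of parentheses $(s\,x_{n+2})$. Since $x_{n+2}$ is the last letter, $s$ is a contiguous subexpression ending with $x_{n+1}$. Deleting $x_{n+2}$ together with its redundant parentheses replaces $(s\,x_{n+2})$ by $s$. Hence $s$ is a subexpression of $u$ ending in $x_{n+1}$, and these are exactly the right-spine subexpressions $s_1,\dots,s_l$. Conversely, replacing $s_i$ by $(s_i\,x_{n+2})$ in $u$ yields a word $w_i\in\cT_{n+1}$ with $q_n(w_i)=u$. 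The map $q_n$ is invertible once we know which subexpression $x_{n+2}$ was attached to, so the $w_i$ are pairwise distinct and
\[[w]=\{w_1,\dots,w_l\}.\]
I expect this step to be the main obstacle. The key point is to argue cleanly, using the unique decomposition of words stated at the beginning of \Cref{sec: Tamari lattices}, that every subexpression of $u$ ending in its last letter lies on the right spine, and that removal of $x_{n+2}$ is exactly undone by this insertion.

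\textbf{Step 2: consecutive preimages are covers.} Fix $i<l$. In $w_i$ the relevant piece is
\[(s_i\,x_{n+2})=((e_i\,s_{i+1})\,x_{n+2}).\]
Apply \eqref{eq:coverRel} with $x=e_i$, $y=s_{i+1}$ and $z=x_{n+2}$. This rewrites the piece as $(e_i(s_{i+1}x_{n+2}))$ and leaves the rest of the word unchanged. The result is precisely $w_{i+1}$, so $w_i\lessdot w_{i+1}$. Therefore
\[w_1\lessdot w_2\lessdot\dots\lessdot w_l,\]
and $[w]$ is totally ordered. It is in fact a saturated chain with minimum $w_1=(u\,x_{n+2})$ and maximum $w_l$, in which $x_{n+2}$ is attached directly to $x_{n+1}$. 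This description should also be useful later for \Cref{prop: LatticeCongruence}.
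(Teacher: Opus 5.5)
Your proposal is correct and follows the paper's proof: both describe the fibre $[w]$ as the words obtained by attaching $x_{n+2}$ to one of the right-spine subexpressions of $q_n(w)$, which the paper phrases as inserting $x_{n+2}$ before or after one of the trailing closing parentheses. Both then note that consecutive such words differ by a single reparenthesization of type \eqref{eq:coverRel}. Your Step 1 simply spells out why these are all the preimages, which the paper leaves implicit.
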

\begin{proof}
Let $k$ be the number of consecutive closing parentheses at the end of $q(w)$. Then there are parenthesized subexpressions $e_1, \dots, e_k$ such that
\[
    q(w) = (e_1(e_2(\dots(e_{k-1}(e_kx_{n+1}))\dots).
\]
The letter $x_{n+2}$ can be added before or after each of these closing parentheses, which gives $k+1$ words in $\cT_{n+1}$. These are ordered by the number of closing parentheses:
\[
    (e_1(\dots(e_{i-1}((e_i(\dots(e_kx_{n+1}\underbrace{)\dots)}_{k-i+1}x_{n+2}\underbrace{)\dots)}_{i} \lessdot (e_1(\dots(e_i((e_{i+1}(\dots(e_kx_{n+1}\underbrace{)\dots)}_{k-i}x_{n+2}\underbrace{)\dots)}_{i+1}
\]
\end{proof}
We define the \emph{height} of $w$ to be the number of words that lie strictly below $w$ in $[w]$, and denote it by $h(w)$. We use the notation $[w]_i$ to denote the element of height $i$ in $[w]$. By the proof above, we see that $h(w)$ is the number of closing parentheses at the end of $w$ minus one. This observation leads to the following result.

\begin{corollary}\label{size of equivalence class}
Let $w$ be an element of $\cT_{n+1}$, then \[|[w]| = h(q(w)) + 2.\]
In particular, the equivalence class $[\hat{1}]$ in $\cT_{n+1}$ has size $n+1$.
\end{corollary}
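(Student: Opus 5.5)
The plan is to read off the size of $[w]$ directly from the description of the equivalence classes obtained in the proof of \Cref{lem:chain}, and then to combine this with the formula for the height in terms of trailing parentheses.

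\textbf{Step 1: size of the class.} Write $v = q(w)\in\cT_n$ and let $k$ be the number of consecutive closing parentheses at the end of $v$. Then
\[
v=(e_1(e_2(\dots(e_k x_{n+1})\dots)
\]
for subexpressions $e_1,\dots,e_k$. The proof of \Cref{lem:chain} shows that $[w]=q^{-1}(\{v\})$ consists of the words obtained by inserting $x_{n+2}$ at one of the $k+1$ admissible positions, namely before or after each of the trailing closing parentheses. I will check two things:
\begin{itemize}
\item these $k+1$ words are pairwise distinct, since they have different numbers of trailing closing parentheses;
\item every preimage of $v$ arises this way. Indeed, in any $u\in\cT_{n+1}$ the letter $x_{n+2}$ is the right factor of a pair $(E\,x_{n+2})$, where $E$ ends with $x_{n+1}$. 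Removing $x_{n+2}$ replaces $(E\,x_{n+2})$ by $E$, so $E$ must be a terminal segment $(e_i(\dots(e_kx_{n+1})\dots)$ of the right spine of $v$.
\end{itemize}
Hence $|[w]|=k+1$.

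\textbf{Step 2: expressing $k$ through the height.} The paper records, just after \Cref{lem:chain}, that $h(u)$ equals the number of closing parentheses at the end of $u$ minus one, for $u$ in $\cT_{n+1}$. The same argument, applied one level down to $v\in\cT_n$ via the map $q_{n-1}$, gives $h(v)=k-1$. Substituting into Step 1 yields
\[
|[w]| = k+1 = h(q(w))+2.
\]
For $n=1$ I will check the formula directly, or adopt the convention that every word of the relevant lattice has a trailing-parenthesis count, so that the height formula still makes sense there.

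\textbf{Step 3: the top class.} For $w=\hat1$ we have $q(\hat1)=\hat1_{\cT_n}=(x_1(x_2(\dots(x_nx_{n+1})\dots)$. This word has $n$ closing parentheses at the end, so $k=n$ and $|[\hat1]|=n+1$.

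\textbf{Main obstacle.} The only genuinely nontrivial point is the surjectivity part of Step 1: showing that inserting $x_{n+2}$ on the right spine exhausts the fibre. The precise bookkeeping needed is that removing the pair of parentheses around $(E\,x_{n+2})$ leaves a word of $\cT_n$ exactly when $E$ is a right-spine suffix of that word. Everything else is counting.
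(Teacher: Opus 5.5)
Your proposal is correct and follows the paper's route. You read $|[w]|=k+1$ off the description in the proof of \Cref{lem:chain}, namely the $k+1$ positions along the right spine of $q(w)$ where $x_{n+2}$ can be inserted, and then apply the observation that $h$ equals the number of trailing closing parentheses minus one to $q(w)\in\cT_n$. Your explicit check that every preimage arises from such an insertion, and your remark on the degenerate case $n=1$, supply details the paper leaves implicit.
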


The following technical lemma provides a crucial description of the interaction between the equivalence relation defined by $q_n$ and the covering relations of the lattices $\cT_n$ and $\cT_{n+1}$.

\begin{lemma}[Banana Lemma\footnote{This name stems from the habit of the first four authors of referring to the equivalence classes under \emph{q} as "bananas". We found the name catchy and decided to keep it.}]\label{lem:banana}
    Let $a$ and $b$ be elements in $\cT_{n}$ such that $a \lessdot b$. Let $z$ be the subexpression on the right of the reparenthesization that transforms $a$ into $b$ as in \Cref{eq:coverRel}.
    \begin{enumerate}
        \item If $x_{n+1} \notin z$, then the element of height $i$ in $q^{-1}(\{a\})$ is covered by the element of height $i$ in $q^{-1}(\{b\})$ for every possible height $i$. In particular, both classes have the same cardinality.
        \item If $x_{n+1} \in z$, let $j$ be the number of closing parentheses after $z$. Then the element of height $i$ in $q^{-1}(\{a\})$ is covered by the element of height $i$ in $q^{-1}(\{b\})$ if $i < j$ and by the element of height $i+1$ otherwise. In particular, $|q^{-1}(\{b\})| = |q^{-1}(\{a\})| + 1$.
    \end{enumerate}
\end{lemma}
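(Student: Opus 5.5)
The plan is to make the description of the classes from the proof of \Cref{lem:chain} explicit in terms of the right spine, and then track where the reparenthesization happens. For $u\in\cT_n$, write its right spine as $R_0=u$, $R_{t-1}=(e_tR_t)$ for $t=1,\dots,k$, with $R_k=x_{n+1}$. By the proof of \Cref{lem:chain}, the elements of $q^{-1}(\{u\})$ are exactly the words $u^{(t)}$ obtained by replacing the subexpression $R_t$ by $(R_tx_{n+2})$, for $t=0,\dots,k$. In $u^{(t)}$ the letter $x_{n+2}$ is followed by exactly $t+1$ closing parentheses. Since the height of a word is the number of closing parentheses at its end minus one, $u^{(t)}$ is the element of height $t$, so $|q^{-1}(\{u\})|=k+1$. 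The whole proof then reduces to comparing the spines of $a$ and $b$ and exhibiting, for each $t$, a single reparenthesization of the form \eqref{eq:coverRel} in $\cT_{n+1}$ carrying $a^{(t)}$ to the claimed element over $b$.

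For case (1), let $((xy)z)$ be the subexpression of $a$ that is rotated. If it were a spine element $R_s$, then $z=R_{s+1}$ would contain $x_{n+1}$. So when $x_{n+1}\notin z$, the subexpression $((xy)z)$ lies entirely inside some left factor $e_s$. The rotation then changes only $e_s$: $b$ has the same spine length $k$, and its spine elements differ from those of $a$ only inside $e_s$. The same rotation, performed inside $e_s$, turns $a^{(t)}$ into $b^{(t)}$ for every $t$. This is a cover relation in $\cT_{n+1}$, and the two classes have the same cardinality.

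For case (2), the condition $x_{n+1}\in z$ forces the rotated subexpression to be a spine element, namely $R_s=((xy)R_{s+1})$ with $z=R_{s+1}$. The word $b$ then has spine $\dots,R'_s=(x(yz)),\,(yz),\,z,\dots$, so its spine is exactly one longer, which gives $|q^{-1}(\{b\})|=|q^{-1}(\{a\})|+1$. Counting parentheses after $z$ in $a$ (the one closing $R_s$, plus $s$ further ones) gives $j=s+1$. There are two subcases.
\begin{itemize}
\item If $t\le s$, i.e.\ $t<j$, then $x_{n+2}$ is attached at a spine element containing the rotated subexpression. The same rotation gives $a^{(t)}\lessdot b^{(t)}$.
\item If $t\ge j$, then $x_{n+2}$ is attached inside $z$, replacing $z$ by some $z'$ that still ends the spine. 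The rotation $((xy)z')\lessdot(x(yz'))$ yields a word in which this attachment point sits one level deeper in the spine of $b$. Hence $a^{(t)}$ is covered by $b^{(t+1)}$.
\end{itemize}

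The main obstacle is purely the index bookkeeping: identifying $j$ with $s+1$, and checking the boundary value $t=s$. At $t=s$, $x_{n+2}$ is attached to the whole rotated block, $(((xy)z)x_{n+2})$, and this rotates to $((x(yz))x_{n+2})$, which is $b^{(s)}$. I would verify these small cases against the $\cT_3\to\cT_4$ picture in \Cref{fig:HasseT3_4}.
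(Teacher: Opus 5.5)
Your proposal is correct and takes essentially the paper's route. Like the paper, you write $a=(e_1(\dots(e_kx_{n+1})\dots)$ along its right spine. In case (1) you perform the same rotation inside the left factor containing $((xy)z)$. In case (2) you identify the rotated block as the spine element whose left factor is $(xy)$; your $s+1$ is the paper's $j$. You then split according to whether $x_{n+2}$ is attached above $z$ (height preserved) or at or inside $z$ (height shifted by one). Your $R_t$, $u^{(t)}$ notation only makes the paper's explicit parenthesis counting more transparent.
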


\begin{figure}
    \centering
    \scalebox{0.9}{
    \begin{tikzpicture}
    \draw[rounded corners = 3mm, gray] (6.5, 10) rectangle (9.5, 14);
    \draw[rounded corners = 3mm, gray] (11.5, 8) rectangle (14.5, 11);
    \draw[rounded corners = 3mm, gray] (2.5, 8.5) rectangle (5.5, 11.5);
    \draw[rounded corners = 3mm, gray] (-1.5, 7) rectangle (1.5, 9);
    \draw[rounded corners = 3mm, gray] (4.5, 4.5) rectangle (7.5, 6.5);
    \node (0) at (8, 13.5) {\footnotesize$(x_1(x_2(x_3(x_4 x_5))))$};
    \node (1) at (8, 12.5) {\footnotesize$(x_1(x_2((x_3x_4)x_5)))$};
    \node (4) at (8, 11.5) {\footnotesize$(x_1((x_2(x_3x_4))x_5))$};
    \node (8) at (8, 10.5) {\footnotesize$((x_1(x_2(x_3x_4)))x_5)$};
    
    \node (2) at (13, 10.5) {\footnotesize$((x_1x_2)(x_3(x_4x_5)))$};
    \node (6) at (13, 9.5) {\footnotesize$((x_1x_2)((x_3x_4)x_5))$};
    \node (10) at (13, 8.5) {\footnotesize$(((x_1x_2)(x_3x_4))x_5)$};
    
    \node (3) at (4, 11) {\footnotesize$(x_1((x_2x_3)(x_4x_5)))$};
    \node (5) at (4, 10) {\footnotesize$(x_1(((x_2x_3)x_4)x_5))$};
    \node (9) at (4, 9) {\footnotesize$((x_1((x_2x_3)x_4))x_5)$};
    
    \node (7) at (0, 8.5) {\footnotesize$((x_1(x_2x_3))(x_4x_5))$};
    \node (12) at (0, 7.5) {\footnotesize$(((x_1(x_2x_3))x_4)x_5)$};

    \node (11) at (6, 6) {\footnotesize$(((x_1x_2)x_3)(x_4x_5))$};
    \node (13) at (6, 5) {\footnotesize$((((x_1x_2)x_3)x_4)x_5)$};

    \draw (0) -- (1);
    \draw (0) -- (2);
    \draw (0) -- (3);
    \draw (1) -- (4);
    \draw (1) -- (6);
    \draw (2) -- (11);
    \draw (2) -- (6);
    \draw (3) -- (5);
    \draw (3) -- (7);
    \draw (4) -- (5);
    \draw (4) -- (8);
    \draw (5) -- (9);
    \draw (6) -- (10);
    \draw (7) -- (11);
    \draw (7) -- (12);
    \draw (8) -- (9);
    \draw (8) -- (10);
    \draw (9) -- (12);
    \draw (10) -- (13);
    \draw (11) -- (13);
    \draw (12) -- (13);
    \end{tikzpicture}}
    \caption{Illustration of \Cref{lem:banana}}
    \label{fig:placeholder}
\end{figure}

\begin{proof}
    Let $k$ be the number of closing parentheses at the end of $a$.
    \begin{enumerate}
        \item We can write
        \[
            a = (e_1(\dots(e_{j-1}(e_j(e_{j+1}(\dots(e_kx_{n+1})\dots)
        \]
        for some parenthesized expressions $e_1,\dots e_k$. Since $x_{n+1}$ is not contained in $z$, the reparenthesization occurs inside of one of these expressions and we may assume this expression to be $e_j$. We call the resulting expression $\tilde{e_j}$. Then we can write
        \begin{align*}
        b = (e_1(\dots(e_{j-1}(\tilde{e_j}(e_{j+1}(\dots(e_kx_{n+1})\dots).
        \end{align*}
        Adding $x_{n+2}$, we see that elements of the same height only differ by $e_j$ and $\tilde{e_j}$. Thus, we have a covering relation between elements of the same height in $q^{-1}(a)$ and $q^{-1}(b)$.
        \item 
        Again, we write $a$ as above and see that $z$ is of the form $(e_{j+1}(\dots(e_kx_{n+1})\dots)$ for some $j$. Since $z$ is a subexpression as in \eqref{eq:coverRel}, we know that $e_j = (xy)$. We get
        \begin{align*}
            a &= (e_1(\dots(e_{j-1}((xy)\overbrace{(e_{j+1}(\dots(e_k x_{n+1}\underbrace{)\dots)}_{k-j}}^{z}\underbrace{)\dots)}_{j} \text{ and} \\
            b &= (e_1(\dots(e_{j-1}(x(y(e_{j+1}(\dots(e_k x_{n+1}\underbrace{)\dots)}_{k-j} \, ) \underbrace{)\dots)}_{j}.
        \end{align*}
        Suppose that $i < j$. We have the following covering relation between the element of height $i$ in $q^{-1}(\{a\})$ and the element of height $i$ in $q^{-1}(\{b\})$. The shifting parentheses are marked.
        \begin{align*}
            &(e_1(\dots(e_i((e_{i+1}(\dots(e_{j-1}(\pmb(xy\pmb)(e_{j+1}(\dots(e_k x_{n+1}\underbrace{)\dots)}_{k-j}\underbrace{)\dots)}_{j-i}x_{n+2}\underbrace{)\dots)}_{i+1} \\
            \lessdot &(e_1(\dots(e_i((e_{i+1}(\dots(e_{j-1}(x\pmb(y(e_{j+1}(\dots(e_k x_{n+1}\underbrace{)\dots)}_{k-j} \pmb) \underbrace{)\dots)}_{j-i}x_{n+2}\underbrace{)\dots)}_{i+1}.
        \end{align*}
        For height $i \geq j$ in $q^{-1}(\{a\})$, we obtain a shift of heights:
        \begin{align*}
            &(e_1(\dots(e_{j-1}(\pmb(xy\pmb)(e_{j+1}(\dots(e_i((e_{i+1}(\dots((e_k x_{n+1}\underbrace{)\dots)}_{k-i}x_{n+2}\underbrace{)\dots)}_{i-j+1}\underbrace{)\dots)}_{j} \\
            \lessdot &(e_1(\dots(e_{j-1}(x\pmb(y(e_{j+1}(\dots(e_i((e_{i+1}(\dots(e_k x_{n+1}\underbrace{)\dots)}_{k-i}x_{n+2}\underbrace{)\dots)}_{i-j+1}\,\pmb)\underbrace{)\dots)}_{j}.
        \end{align*}
    \end{enumerate}
\end{proof}

With this, we are now able to prove the main result of this subsection.

\begin{proposition}\label{prop: LatticeCongruence}
    The map $q$ defines a lattice congruence on $\cT_{n+1}$.
\end{proposition}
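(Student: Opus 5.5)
We verify the three conditions of the definition of a lattice congruence for the equivalence relation whose classes are the fibres $[w]=q^{-1}(\{q(w)\})$.

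\textbf{Condition (1).} By \Cref{lem:chain} each class $[w]$ is a chain. Its elements are obtained from $q(w)$ by inserting $x_{n+2}$ before one of the trailing closing parentheses, and consecutive such insertions are covering relations. So $[w]$ is a saturated chain $[w]_0\lessdot [w]_1\lessdot\dots\lessdot [w]_{h(q(w))+1}$.

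To conclude that $[w]$ is an interval, we show that any $c$ with $[w]_0\le c\le [w]_{top}$ lies in $[w]$. For this we first prove that $q$ is order-preserving. Cover relations in $\cT_{n+1}$ either involve $x_{n+2}$ in the rotated triple, or do not. In the first case the rotation only moves the position of $x_{n+2}$ among the trailing parentheses, so $q$ is constant. In the second case $q$ maps the cover to a cover in $\cT_n$, since deleting $x_{n+2}$ commutes with a rotation not involving it. Hence $q(a)\le q(b)$ whenever $a\le b$. Then $q([w]_0)\le q(c)\le q([w]_{top})$, and both ends equal $q(w)$, so $q(c)=q(w)$.

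\textbf{Conditions (2) and (3).} Since $\cT_{n+1}$ is finite, $\topmap$ and $\botmap$ are order-preserving as soon as $a\lessdot b$ implies $\topmap(a)\le\topmap(b)$ and $\botmap(a)\le\botmap(b)$. Take a cover $a\lessdot b$ in $\cT_{n+1}$. If $q(a)=q(b)$ there is nothing to show. Otherwise, by the case analysis above, $q(a)\lessdot q(b)$ in $\cT_n$, and we apply \Cref{lem:banana} to this cover of $\cT_n$.

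In both cases of \Cref{lem:banana}, the element of height $0$ in $q^{-1}(q(a))$ is covered by the element of height $0$ in $q^{-1}(q(b))$. In case (2) this is because $j\ge 1$, as $z$ is followed by at least one closing parenthesis. This gives $\botmap(a)\lessdot\botmap(b)$.

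For the top elements, let $m$ be the maximal height in $q^{-1}(q(a))$. The top of $q^{-1}(q(a))$ is covered by the element of height $m$ in case (1), or of height $m$ or $m+1$ in case (2), of $q^{-1}(q(b))$. Since case (1) gives equal class sizes and case (2) gives a class one larger, that element is at most the top of $q^{-1}(q(b))$. Hence $\topmap(a)\le\topmap(b)$.

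\textbf{Main obstacle.} The delicate point is the claim that a cover in $\cT_{n+1}$ not involving $x_{n+2}$ descends to a cover in $\cT_n$, and that covers involving $x_{n+2}$ stay within a fibre. A rotation $((xy)z)\to(x(yz))$ with $x_{n+2}$ inside $z$ is the only other possibility. When $z$ contains $x_{n+2}$ but is not just $x_{n+2}$, the rotation acts on the subexpression $z$ and becomes, after deletion, a rotation whose right part is $z$ with $x_{n+2}$ removed. This is still a valid cover in $\cT_n$ of the type in case (2) of \Cref{lem:banana}. We therefore check this case carefully by writing $w=(e_1(\dots(e_k\,u)\dots)$ with $x_{n+2}$ at the end, and confirming that every rotation either preserves $q$ or maps to a cover.
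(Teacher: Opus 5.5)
Your proof is correct and follows the same route as the paper. Order-preservation of $q$ makes each fibre an interval, and for a cover $a\lessdot b$ with $q(a)\neq q(b)$ one has $q(a)\lessdot q(b)$, so the Banana Lemma gives $\botmap(a)\lessdot\botmap(b)$ and $\topmap(a)\le\topmap(b)$ (in fact a cover, since the top height $m$ satisfies $m\ge j$).

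Do tidy up your first case split. It asserts that every rotation involving $x_{n+2}$ keeps $q$ constant, which is false. As your final paragraph correctly says, $q$ is constant only when $z=x_{n+2}$, while rotations with $x_{n+2}\in z\neq x_{n+2}$ descend to covers in $\cT_n$.
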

\begin{proof}
  By construction, we have that $a\leq b$ implies $q(a)\leq q(b)$. This shows that $[a]$ is an interval since $\botmap(a)\leq b \leq \topmap(a)$ implies $q(a)=q(b)$. Moreover, let $a \lessdot b \in \cT_{n+1}$ with $[a]\neq [b]$. Then \Cref{lem:banana} yields $\botmap(a)\lessdot \botmap(b)$ and $\topmap(a)\lessdot\topmap(b)$. From this, we deduce that $\botmap$ and $\topmap$ are order-preserving.
\end{proof}

\begin{remark}\label{rem: banana lemma}
By \cite[Section 2]{Reading}, the quotient $\cT_{n+1} /_\equiv \simeq \cT_n$ is isomorphic to the induced subposets $\topmap(\cT_{n+1})$ and $\botmap(\cT_{n+1})$ of $\cT_{n+1}$. By a repeated application of \Cref{lem:banana} we also obtain that bottom elements of equivalence classes only cover other bottom elements and that top elements of an equivalence class are covered by other top elements. Elements that are neither top nor bottom elements are covered by neither top nor bottom elements of different equivalence classes. 
\end{remark}

Combining this with \Cref{prop: LatticeCongruence} gives the following result.

\begin{corollary}\label{cor: bot and top meet and join}
    Let $a,b \in \cT_{n+1}$. Then
    \begin{enumerate}
        \item $\topmap(a)\wedge \topmap(b) = \topmap(a\wedge b)$, $\topmap(a)\vee \topmap(b) = \topmap(a\vee b)$,
        \item $\botmap(a)\wedge \botmap(b) = \botmap(a\wedge b)$ and $\botmap(a)\vee \botmap(b) = \botmap(a\vee b)$.
    \end{enumerate}
\end{corollary}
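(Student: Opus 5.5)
I will derive all four identities from the standard fact about lattice congruences: an equivalence relation satisfying the three conditions of \cite[Section 2]{Reading} is compatible with meet and join. Concretely, if $a\equiv a'$ and $b\equiv b'$, then $a\wedge b\equiv a'\wedge b'$ and $a\vee b\equiv a'\vee b'$. If I do not want to quote this, I will prove it directly from the definition, as follows.

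First I show $\topmap(a)\wedge\topmap(b)\equiv a\wedge b$. Put $m=a\wedge b$. Since $m\le a$ and $\topmap$ is order-preserving, $\topmap(m)\le\topmap(a)$; likewise $\topmap(m)\le \topmap(b)$. Hence $m\le\topmap(m)\le \topmap(a)\wedge\topmap(b)=:t$. For the reverse, I use $\botmap$. Because $t\le\topmap(a)$, monotonicity gives $\botmap(t)\le\botmap(\topmap(a))=\botmap(a)\le a$, and similarly $\botmap(t)\le b$. So $\botmap(t)\le m\le t$. Since $[t]$ is an interval containing $\botmap(t)$ and $t$, we get $m\in[t]$, i.e.\ $[m]=[t]$.

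Next I show that $t$ is actually the top of its class. Since $t\equiv m$, we have $\topmap(t)=\topmap(m)$, and we already know $\topmap(m)\le t\le\topmap(t)$. This forces $t=\topmap(m)$, which is the claim $\topmap(a)\wedge\topmap(b)=\topmap(a\wedge b)$.

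The join identity for $\botmap$ is the exact order dual, swapping the roles of $\topmap$ and $\botmap$. Put $s=\botmap(a)\vee\botmap(b)$. Then $s\le\botmap(a\vee b)\le a\vee b$, and $\topmap(s)\ge a\vee b$ by monotonicity. Hence $a\vee b\in[s]$, and $\botmap(s)=\botmap(a\vee b)\ge s$ forces equality.

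The remaining two identities, $\topmap(a)\vee\topmap(b)=\topmap(a\vee b)$ and $\botmap(a)\wedge\botmap(b)=\botmap(a\wedge b)$, are where the main subtlety lies. For a general congruence the join of two top elements need not be a top element, so a different argument is needed here. I plan to use \Cref{rem: banana lemma}, which says that $\topmap(\cT_{n+1})$ is an induced subposet isomorphic to the quotient and that top elements are covered only by top elements.

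For the join, I take $u=\topmap(a)\vee\topmap(b)$. Suppose $u$ were not a top element. Then $u\lessdot\topmap(u)$ would be a covering within the same class. I will instead use the compatibility from the first step: $u\equiv a\vee b$ by the same interval argument, and $u\ge$ each $\topmap$. The intended route is to take a maximal chain from $\topmap(a)$ up to $u$. By the remark, every element covering a top element is itself a top element, so $u$ must be a top element. Then $u=\topmap(u)=\topmap(a\vee b)$.

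The meet of bottoms is dual, using that bottom elements only cover bottom elements: walking down a chain from $\botmap(a)$ to $\botmap(a)\wedge\botmap(b)$ stays among bottom elements. The key step to check carefully is that inductive chain argument, namely that "covered only by tops" propagates along any upward chain starting at a top element.
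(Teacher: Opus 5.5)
Your argument is correct and is essentially the paper's, which obtains the corollary by combining \Cref{prop: LatticeCongruence} with \Cref{rem: banana lemma}. The top-meet and bottom-join identities hold for any lattice congruence, as your interval arguments show, and the other two follow because tops are only covered by tops and bottoms only cover bottoms, so tops form an up-set and bottoms a down-set (your aside that $u\lessdot\topmap(u)$ would be a cover need not hold, but you rightly discard it).
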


\subsection{Closing index sets}\label{sec: closing index sets}
In this subsection, we give structural results on the heights of elements in their equivalence class. This will help us to show that certain antichains are Boolean.
We use the idea behind a well-known bijection between the set of closing parentheses of a parenthesized expression, down steps in a Dyck path and a $312$-avoiding permutation (cf. \cite[Section 2.2.1]{Knuth}). This leads to the following definition.

\begin{definition}\label{def: closing parenthesis}
    Let $v \in \cT_{n+1}$ and $k \in [n+1]$. Let $z_k^v$ be the smallest parenthesized subexpression of $v$ such that $x_k \in z_k^v$ and such that there exists another subexpression $y^v$ with $(z_k^vy^v)$ a subexpression of $v$. Then we call the closing parenthesis of $(z_k^vy^v)$ the \textit{closing parenthesis corresponding to $x_k$} and denote it by $cp^v_k$. With this, let
    $$I_v:=\{k \in [n]: cp_k^v \text{ closes before } x_{n+2}\}$$
    be the \textit{closing index set} of $v$.
\end{definition}

As the definition suggests, there exists no closing parenthesis $cp^v_{n+2}$, and the closing parenthesis $cp^v_{n+1}$ always closes after $x_{n+2}$ by construction.
Furthermore, \Cref{lem:chain} shows that the height of an element $v$ in its equivalence class is given by $h(v)=n - |I_v|$.

\begin{example}
    Let $u = (((x_1x_2)x_3)x_4), v=((x_1(x_2x_3))x_4)$ and $w=(x_1(x_2(x_3x_4)))$ be elements of $\cT_3$ (see \Cref{fig:HasseT3_4}). Then $cp_2^u, cp_2^v$ and $cp_2^w$ are the following closing parentheses:
    $$
        u = (((\underbrace{x_1x_2}_{z_2^u})\underbrace{x_3}_{y^u}\pmb{)}x_4), v = ((x_1(\underbrace{x_2}_{z_2^v}\underbrace{x_3}_{y^v}\pmb{)})x_4), w = (x_1(\underbrace{x_2}_{z_2^w}(\underbrace{x_3x_4}_{y^w})\pmb{)})
    $$
    The closing index sets are
    $$
        I_u = \{1,2\}, I_v = \{1,2\} \text{ and } I_w = \{\}.
    $$
\end{example}

By \Cref{lem:chain}, we know that the closing index sets of bottom elements of $q$-equivalence classes in $\cT_{n+1}$ always contain all indices from $1$ to $n$. 

For $v \in \cT_{n+1}$, we write
\[ k_v :=
    \begin{cases*}
    0 & if $I_v = \{1,\dots,n\}$ and\\
      \max([n] \setminus I_v) & otherwise.
    \end{cases*}
\]

This allows us to describe how closing index sets change along a cover relation.

\begin{lemma}\label{lem: closing parentheses in equivalence class}
    Let $v\lessdot w \in \cT_{n+1}$ and let $z$ be the subexpression on the right of the reparenthesization that transforms $v$ into $w$ as in \Cref{eq:coverRel}. If $x_{n+2}\notin z$, then $I_v = I_w$. If $x_{n+2}\in z$, then $I_w\subsetneq I_v$. Additionally, if $v$ and $w$ are elements of the same equivalence class, we have $I_v = I_w \cup \{k_w\}$.
\end{lemma}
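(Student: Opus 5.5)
The approach is a direct local analysis of the reparenthesization $(\dots((xy)z)\dots)\lessdot(\dots(x(yz))\dots)$ that turns $v$ into $w$. For each $k\in[n]$ I will track where the closing parenthesis $cp_k$ sits, using \Cref{def: closing parenthesis}: $cp_k^v$ is the closing parenthesis of the smallest left factor $(z_k^v y^v)$ whose left part contains $x_k$. Write $X=(xy)z$ for the subexpression being rotated. Every letter outside $X$ has the same smallest left-factor subexpression in $v$ and $w$, because the rotation does not change $X$ as a block. So $cp_k$ is literally the same parenthesis for such $k$. Moreover, whether it closes before $x_{n+2}$ is unchanged, since the string of letters is identical and $X$ occupies the same position.

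For letters inside $X$ there are three cases. If $x_k\in x$, the left factor changes from $((xy)\cdot z)$ to $(x\cdot(yz))$ only in the situation where $x_k$ lies in the rightmost spine of $x$. Otherwise the smallest left factor containing $x_k$ already lies inside $x$ and is unchanged. In the spine situation, $cp_k^v$ is the parenthesis closing $(xy)$, which lies before $z$, whereas $cp_k^w$ is the outer parenthesis closing $(x(yz))$, which lies after $z$. The case $x_k\in y$ is symmetric: in $v$ the relevant parenthesis closes $(xy)$ or something inside $y$, and in $w$ it closes $(yz)$. The case $x_k\in z$ needs some care: the relevant parenthesis is internal to $z$, or else it is determined outside $X$ (when $x_k$ lies on the right spine of $z$), and in both situations it is unchanged. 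Consequently, the only parentheses whose position relative to $x_{n+2}$ can move are the one closing $(xy)$ in $v$, which moves to after $z$, and the one for the right spine of $y$, which closes after $z$ in both $v$ and $w$. Such a change alters the relation to $x_{n+2}$ only if $x_{n+2}$ lies in $z$, and then the parenthesis can only move from before $x_{n+2}$ to after it. This gives $I_v=I_w$ when $x_{n+2}\notin z$ and $I_w\subseteq I_v$ in general.

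For the strict inclusion when $x_{n+2}\in z$, let $k$ be the index of the last letter of $x$. Then $cp_k^v$ closes $(xy)$ before $z$, hence before $x_{n+2}$, while $cp_k^w$ closes $(x(yz))$ after $x_{n+2}$. Since $x$ contains no $x_{n+2}$ and precedes it, we have $k\le n+1$. To ensure $k\neq n+1$ I use that $x_{n+1}$ precedes $x_{n+2}$ and that $y$ is nonempty and sits between $x$ and $z$, so $k\le n$. Hence $k\in I_v\setminus I_w$. I also need the spine case of $y$ to contribute nothing new, and I expect this bookkeeping to be the main obstacle: showing that exactly one index changes status.

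For the final claim, suppose $q(v)=q(w)$. By \Cref{lem:chain} the cover $v\lessdot w$ is one step up in the chain, which moves $x_{n+2}$ past exactly one closing parenthesis. So $z$ contains $x_{n+2}$, and by the analysis above exactly one index $k$ leaves $I_v$. Hence $|I_v|=|I_w|+1$, which is consistent with $h(v)=n-|I_v|$. It remains to identify $k=k_w$. From the explicit chain in \Cref{lem:chain}, the words in $[w]$ have the form $(e_1(\dots(e_i((e_{i+1}\dots x_{n+1})\dots)x_{n+2})\dots)$. Here $x$ is $e_i$'s enclosing block, so $k$ is the last letter of the block $e_{i+1}\cdots$. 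For the word one step higher, all indices after $k$ already close before $x_{n+2}$. Therefore $k=\max([n]\setminus I_w)=k_w$, and $I_v=I_w\cup\{k_w\}$.
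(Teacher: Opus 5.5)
Your argument is essentially the paper's. Both single out the parenthesis closing $(xy)$, i.e.\ $cp_k$ for the last letter $x_k$ of $x$, as the only closing parenthesis that changes its position relative to the letters. Both also use \Cref{lem:chain} to get $z=x_{n+2}$ in the same-class case. Your case split ($x_k$ in $x$, $y$, $z$, or outside $(xy)z$) spells out what the paper asserts in one sentence. The proof goes through, but two statements need repair.

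First, in the case $x_k\in y$ you say that in $v$ the relevant parenthesis closes $(xy)$. That is false. Taken literally, it would make the last letter of $y$ a second index that changes status whenever $x_{n+2}\in z$. For example, for $v=((x_1x_2)(x_3(x_4x_5)))\lessdot w=(x_1(x_2(x_3(x_4x_5))))$ in $\cT_4$ it would put $2$ into $I_v$, whereas $I_v=\{1\}$. The correct statement: if $x_k$ is the last letter of $y$, then in $v$ the block $y$ is a right child and $(xy)$ is a left child. So $z_k^v=(xy)$ and $cp_k^v$ closes $((xy)z)$. In $w$ one has $z_k^w=y$, and $cp_k^w$ closes $(yz)$. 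Both parentheses come directly after $z$, so they lie in the same gap between letters, and this index never changes status. That is what your summary sentence asserts, and it is exactly the ``bookkeeping'' you flag as the main obstacle. With it, exactly one index changes. Similarly, for the last letter of $x$ the left factor changes from $(xy)$, not from $((xy)z)$, to $(x(yz))$.

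Second, your identification of $k$ in the last paragraph is garbled: read literally, ``the last letter of the block $e_{i+1}\cdots$'' is $x_{n+1}$. The clean argument runs as follows. Since $z=x_{n+2}$ and $x$ ends with $x_k$, the block $y$ consists exactly of $x_{k+1},\dots,x_{n+1}$. For $j\in[k+1,n]$ the letter $x_j$ lies in $y$ without being its last letter, so $cp_j^w$ closes inside $y$ and $j\in I_w$. Together with $k\notin I_w$, this gives $k=\max([n]\setminus I_w)=k_w$. This is the paper's remark that the second closing parenthesis after $x_{n+2}$ in $w$ belongs to $x_{k_w}$.
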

\begin{proof}
    If $x_{n+2}\notin z$, we have $I_v=I_w$ by construction. If $x_{n+2} \in z$, we know, similarly to the proof of \Cref{lem:banana}, that there exist parenthesized subexpressions $e_1,\dots,e_j$ such that
    \begin{align*}
        &v = (e_1(\dots(e_{i-1}((e_ie_{i+1}\pmb{)}\overbrace{(e_{i+2}(\dots(e_j x_{n+2})\dots))}^z)\dots) \text{ and } \\
        &w = (e_1(\dots(e_{i-1}(e_i(e_{i+1}(e_{i+2}(\dots(e_j x_{n+2})\dots))\pmb{)})\dots).
    \end{align*}
    The marked parenthesis in $v$ is the closing parenthesis corresponding to the last letter in $e_i$, which we call $x_\ell$. Thus, the closing parenthesis corresponding to $x_\ell$ in $w$ appears immediately after the marked parenthesis. The closing parentheses corresponding to the indices $[n]\setminus\{\ell\}$ do not change their position relative to the letters under the reparenthesization from $v$ to $w$.
    This shows that $I_v = I_w \cup \{\ell\}$. 

    If $v$ and $w$ are elements of the same equivalence class, then by \Cref{lem:chain}, we have $z=x_{n+2}$.
    Since, in this case, the second closing parenthesis after $x_{n+2}$ in $w$ corresponds to $x_{k_w}$ by definition, it follows that $\ell = k_w$. This shows the remaining assertion.
\end{proof}

As an immediate consequence, we can expand this result to all relations in $\cT_{n+1}$.

\begin{corollary}\label{cor: closing parenthesis moved to the right}
    Let $v \leq w \in \cT_{n+1}$, then $I_w \subseteq I_v$.
\end{corollary}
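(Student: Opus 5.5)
Since $\cT_{n+1}$ is finite, $v \leq w$ means there is a saturated chain
\[
v = u_0 \lessdot u_1 \lessdot \dots \lessdot u_m = w
\]
in $\cT_{n+1}$. The plan is an induction on the length $m$ of such a chain.

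For $m = 0$ we have $v = w$, and the inclusion $I_w \subseteq I_v$ is trivial. For a single cover relation $u_{r} \lessdot u_{r+1}$ we apply \Cref{lem: closing parentheses in equivalence class}, letting $z$ be the right subexpression of the reparenthesization, and split into two cases.
\begin{itemize}
\item If $x_{n+2}\notin z$, the lemma gives $I_{u_r} = I_{u_{r+1}}$.
\item If $x_{n+2}\in z$, the lemma gives $I_{u_{r+1}} \subsetneq I_{u_r}$.
\end{itemize}
In both cases $I_{u_{r+1}} \subseteq I_{u_r}$. Chaining these inclusions along the chain and using transitivity of $\subseteq$ yields
\[
I_w = I_{u_m} \subseteq I_{u_{m-1}} \subseteq \dots \subseteq I_{u_0} = I_v .
\]

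Essentially nothing here should be an obstacle. The only point to check is that the previous lemma applies to every cover relation in $\cT_{n+1}$, not just those inside one equivalence class. It does: its first two assertions are stated for arbitrary $v \lessdot w$, and every cover is a reparenthesization as in \eqref{eq:coverRel}. The additional assertion $I_v = I_w \cup \{k_w\}$ for covers within a class is not needed.
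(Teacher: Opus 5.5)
Your proposal is correct and follows the paper's route: the paper states this corollary as an immediate consequence of \Cref{lem: closing parentheses in equivalence class}, which amounts to chaining $I_{u_{r+1}} \subseteq I_{u_r}$ along a saturated chain from $v$ to $w$, as you do. Your check that both cases of that lemma apply to arbitrary covers, not only covers inside one class, is the only point to verify, and you handled it correctly.
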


With this, we can describe the closing index set of the meet of two elements in $\cT_{n+1}$.

\begin{lemma}\label{lem: meet height is union}
    Let $v,w \in \cT_{n+1}$. Then we have $I_{v \wedge w} = I_v \cup I_w$.
\end{lemma}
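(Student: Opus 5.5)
The inclusion $I_v\cup I_w\subseteq I_{v\wedge w}$ is immediate. Since $v\wedge w\leq v$ and $v\wedge w\leq w$, \Cref{cor: closing parenthesis moved to the right} gives $I_v\subseteq I_{v\wedge w}$ and $I_w\subseteq I_{v\wedge w}$. All of the work is in the reverse inclusion $I_{v\wedge w}\subseteq I_v\cup I_w$. Passing to complements in $[n]$, it says: if $k\notin I_v$ and $k\notin I_w$, then $k\notin I_{v\wedge w}$.

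\textbf{Step 1: describe the complement $[n]\setminus I_u$ concretely.} Write $u=(e_1(\dots(e_{l}x_{n+2})\dots)$ as in the beginning of \Cref{sec: Tamari lattices}. I claim that $k\notin I_u$ if and only if $x_k$ is the last letter of some $e_i$.
\begin{itemize}
\item If $x_k$ is the last letter of $e_i$, every smaller subexpression of $e_i$ containing $x_k$ is a right factor. Hence $z_k^u=e_i$, and $cp_k^u$ is the parenthesis closing $(e_i(\dots x_{n+2}\dots))$, which lies after $x_{n+2}$.
\item If $x_k$ is not the last letter of $e_i$, then $z^u_k$ lies strictly inside $e_i$, so $cp_k^u$ closes before $x_{n+2}$.
\end{itemize}
Equivalently, $k\notin I_u$ if and only if the letters $x_{k+1},\dots,x_{n+2}$ form a subexpression of $u$, namely $(e_{i+1}(\dots(e_lx_{n+2})\dots)$. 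This is the same observation as in the proof of \Cref{lem:chain}, and it also matches the count $h(u)=n-|I_u|$.

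\textbf{Step 2: reduce to a closure property.} Fix $k\in[n]$ and let
\[
U_k=\{u\in\cT_{n+1} : x_{k+1}\cdots x_{n+2} \text{ is bracketed as a subexpression of } u\}.
\]
It suffices to show that $U_k$ is closed under binary meets. The main obstacle is this closure property, because meets in the Tamari lattice are not given by a local rule on the words.

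\textbf{Step 3: prove closure under meets via bracket vectors.} My first attempt will use the Huang--Tamari description of the order.
\begin{itemize}
\item Encode $u$ by the vector $(r_u(j))_j$, where $r_u(j)$ is the number of letters in the largest subexpression of $u$ beginning with $x_j$.
\item By Huang--Tamari, $u\leq u'$ holds if and only if $r_u\leq r_{u'}$ componentwise, and the meet is computed from the componentwise minimum, possibly followed by a normalisation.
\item The condition $u\in U_k$ reads $r_u(k+1)=n+2-k$, together with the requirement that no subexpression starting at some $x_j$ with $j\leq k$ extends past $x_k$ without containing all of $x_{k+1},\dots,x_{n+2}$.
\end{itemize}
I would check that componentwise minima preserve both conditions.

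If that bookkeeping becomes messy, I would use a structural fallback. Every $u\in U_k$ is obtained by placing a word $s$ on the suffix letters $x_{k+1},\dots,x_{n+2}$ into the slot of a single letter inside a word $p$ on $x_1,\dots,x_k$ together with that slot. This identifies $U_k$ with a product of two smaller Tamari lattices, and the identification is order-preserving. I would then show that $U_k$ is an interval of $\cT_{n+1}$, or at least closed under meets, using \Cref{lem:banana}-type cover arguments. The key point there is that every cover $u'\lessdot u$ with $u\in U_k$ and $u'\notin U_k$ must break the suffix bracket. I expect this verification to be the real content of the proof.

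\textbf{Conclusion.} With closure of $U_k$ under meets in hand, take $k\notin I_v\cup I_w$. Then $v,w\in U_k$, so $v\wedge w\in U_k$, which means $k\notin I_{v\wedge w}$. This gives $I_{v\wedge w}\subseteq I_v\cup I_w$, completing the proof.
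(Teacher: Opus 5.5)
Your reduction is sound. Step 1 correctly shows that $k\notin I_u$ exactly when $x_{k+1},\dots,x_{n+2}$ form a subexpression of $u$. So the lemma amounts to $U_k$ being closed under meets, which is in effect what the paper proves too.

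The closure itself is only announced in your proposal, but via Huang--Tamari it takes one line once stated precisely:
\begin{itemize}
\item Since $r_u(k+1)\le n+2-k$ always, $U_k=\{u : r_u(k+1)=n+2-k\}$.
\item Your second condition is automatic, because subexpressions are nested or disjoint.
\item The meet needs no normalisation: the nesting condition on bracket vectors is preserved by componentwise minima, so the meet is literally the componentwise minimum. Normalisation only enters for joins.
\end{itemize}

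The paper avoids any meet formula. It exhibits the least element $a$ of $U_\ell$: the bottom element on the letters $x_1,\dots,x_\ell,\star$, with the bottom element on $x_{\ell+1},\dots,x_{n+2}$ substituted for $\star$. It shows $u\geq a$ for every $u\in U_\ell$ by comparing the suffix and the prefix of $u$ with the bottom elements of two smaller Tamari lattices. It then concludes from $I_a=[n]\setminus\{\ell\}$ and \Cref{cor: closing parenthesis moved to the right}. Thus $U_\ell=[a,\hat 1]$ is a principal filter.

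This is exactly the observation your structural fallback lacks. No cover-by-cover analysis is needed: evaluate your order-preserving substitution map at the pair of bottom elements to get the minimum of $U_k$. Then use monotonicity of closing index sets to see that $U_k$ is an up-set.

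Your route is shorter but imports an external theorem: the Tamari order is componentwise comparison of bracket vectors. Its conventions must be matched to the cover relation \eqref{eq:coverRel}; they do match in your encoding. The paper's route is self-contained. It uses only that substitution into a subexpression is order-preserving, together with results already established.
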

\begin{proof}
    Since $v\wedge w \leq v$ and $v\wedge w \leq w$, we have by \Cref{cor: closing parenthesis moved to the right} that $I_v \cup I_w \subseteq I_{v\wedge w}$. \\
    On the other hand, suppose $\ell \in I_{v \wedge w}\setminus(I_v \cup I_w)$, and let 
    $$a:= \Bigl(\underbrace{(\dots(}_{\ell-1}x_1x_2)x_3)\ldots)x_\ell)\underbrace{(\dots(}_{n-\ell+1}x_{\ell+1}x_{\ell+2})x_{\ell+3})\ldots)x_{n+2})\underset{\substack{\uparrow \\ cp_\ell^a}}{\Bigr)}.$$
    Since $\ell \notin I_v$, there exist parenthesized subexpressions $e_1,\dots,e_j$ such that
    \[
        v = (e_1(\dots(e_{i-1}(e_i(e_{i+1}(\dots(e_j x_{n+2} \underbrace{)\dots)}_j,
    \]
    where $x_\ell$ is the last letter in $e_i$. Let $z$ and $z'$ be subexpressions of the form
    \[
        z=(e_{i+1}(\dots(e_j x_{n+2} \underbrace{)\dots)}_{j-i}, z'=\underbrace{(\dots(}_{n+1-\ell}x_{\ell+1}x_{\ell+2}) x_{\ell+3}) \dots )x_{n+1})x_{n+2}).
    \] 
    We see that $z$ is a subexpression of $v$, and that both $z$ and $z'$ are parenthesized expressions on the $n-\ell+2$ letters $x_{\ell+1}, \dots, x_{n+2}$. If we rename $x_i$ as $x_{i-\ell}$ for $\ell+1 \leq i \leq n+2$, we can view both expressions as elements of a Tamari lattice $\cT_{n-\ell+1}$. We see that $z'$ becomes the least element of $\cT_{n-\ell+1}$. Thus, $z \geq z'$ holds, which yields
    \[
        v \geq (e_1(\dots(e_{i-1}(e_iz' \underbrace{)\dots)}_i=:v'.
    \]
    The expressions $e_1, \dots, e_i$ are parenthesized expressions on the letters $x_1, \dots, x_\ell$. Treating $z'$ like an additional letter yields an element of a Tamari lattice $\cT_{\ell}$.
    Comparing that to the least element, we get
    \begin{align*}
        v' &\geq \underbrace{(\dots(}_{\ell}x_1x_2)x_3)\dots)x_\ell)z') \\
        &= \Bigl( \underbrace{(\dots(}_{\ell-1}x_1x_2)x_3)\dots)x_\ell) \underbrace{(\dots(}_{n+1-\ell}x_{\ell+1}x_{\ell+2}) x_{\ell+3}) \dots )x_{n+1})x_{n+2}) \Bigr) = a
    \end{align*}
    
    Analogously, the inequality $a \leq w$ holds, and likewise does $v\wedge w \geq a$. Thus, \Cref{cor: closing parenthesis moved to the right} shows that $\ell \in I_a$. This yields a contradiction since we have $I_a = \{1,\dots,n\}\setminus \{\ell\}$.
\end{proof}


We now give a recursive description of all Boolean antichains in $\cT_{n+1}$ using the following order-preserving map:
\begin{align*}
    f_n: \cT_n &\rightarrow \cT_{n+1} \\
    a &\mapsto \max(q_n^{-1}(\{a\})).
\end{align*}

Since $q_n$ defines a lattice congruence, $f_n$ is a lattice monomorphism. 
Moreover, let $e_1,\dots,e_k$ be subexpressions of $v \in \cT_n$ such that
$$
    v = (e_1(e_2(\dots(e_{k-1}(e_kx_{n+1}))\dots).
$$
Then \Cref{lem:chain} shows that $f_n(v) \in \cT_{n+1}$ is of the form
$$
    f_n(v) = (e_1(e_2(\dots(e_{k-1}(e_k(x_{n+1}x_{n+2})))\dots).
$$
This yields that $I_{f_n(v)} = I_v$. With this, we can provide the following characterisation of top elements of equivalence classes.

\begin{lemma}\label{cor: k_w = n}
    Let $w \in \cT_{n+1}$. Then the following three assertions are equivalent.
    \begin{enumerate}
        \item\label{(1)} $w$ is the top element of its equivalence class.
        \item\label{(2)} The closing index set $I_w$ does not contain the index $n$.
        \item\label{(3)} We have $k_w = n$.
    \end{enumerate}
\end{lemma}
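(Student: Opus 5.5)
The plan is to prove the equivalences $(2)\Leftrightarrow(3)$ directly from the definition of $k_w$, and then $(1)\Leftrightarrow(2)$ by combining the explicit form of top elements with the description of how closing index sets change inside an equivalence class.

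\textbf{Step 1: $(2)\Leftrightarrow(3)$.} This is immediate from the definition of $k_w$. If $n\notin I_w$, then $I_w\neq[n]$. Since $n$ is the largest element of $[n]$, we get $k_w=\max([n]\setminus I_w)=n$. Conversely, if $k_w=n$, then $k_w\neq 0$. Hence $I_w\neq[n]$ and $n=\max([n]\setminus I_w)\notin I_w$.

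\textbf{Step 2: $(1)\Rightarrow(2)$.} Let $w$ be a top element, so $w=f_n(q(w))$. By the explicit form of $f_n$ given above,
\[
w=(e_1(\dots(e_k(x_{n+1}x_{n+2}))\dots).
\]
In this word the smallest subexpression containing $x_n$ that has a right sibling is the subexpression ending in $x_n$, namely $e_k$ or a subexpression of it. Its sibling is $(x_{n+1}x_{n+2})$, or contains it. I will argue carefully that $cp^w_n$ closes after $x_{n+2}$. In the cleanest case this parenthesis is the one closing $(e_k(x_{n+1}x_{n+2}))$. So $n\notin I_w$.

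\textbf{Step 3: $(2)\Rightarrow(1)$.} I argue by contraposition. Suppose $w$ is not the top of its class. By \Cref{lem:chain}, $w\lessdot w'$ for some $w'$ in the same class. \Cref{lem: closing parentheses in equivalence class} then gives $I_w=I_{w'}\cup\{k_{w'}\}$. Now induct downward along the chain $[w]$. The top element $t$ has $k_t=n$ by Steps 1 and 2. The element directly below $t$ therefore contains $n$ in its closing index set. By \Cref{cor: closing parenthesis moved to the right}, every element further below contains $n$ as well, since closing index sets only grow as we go down the class. Hence $n\in I_w$.

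\textbf{Main obstacle.} The delicate point is Step 2: identifying $cp^w_n$ precisely for the top element, including the edge case where $e_k$ is not a single letter. I would handle it by observing that $x_n$ is the last letter of $e_k$. Any subexpression containing $x_n$ whose right sibling exists is either a suffix-part of $e_k$ with a sibling inside $e_k$, or $e_k$ itself. Following the definition, the minimal such subexpression must be one whose right sibling contains the next letter $x_{n+1}$, because any sibling inside $e_k$ would have to lie to the right of $x_n$, which is impossible. So $z^w_n=e_k$ and $y^w=(x_{n+1}x_{n+2})$. The corresponding parenthesis closes after $x_{n+2}$.
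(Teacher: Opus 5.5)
Your proof is correct and follows essentially the same route as the paper. You get (1)$\Rightarrow$(2) from the explicit form $f_n(v)=(e_1(\dots(e_k(x_{n+1}x_{n+2}))\dots)$; the paper simply cites $I_{f_n(v)}=I_v$, and your identification $z^w_n=e_k$, $y^w=(x_{n+1}x_{n+2})$ unpacks it. You get (2)$\Leftrightarrow$(3) from the definition of $k_w$, and the converse by contraposition using $k_{\topmap(w)}=n$ and \Cref{lem: closing parentheses in equivalence class}, with your appeal to \Cref{cor: closing parenthesis moved to the right} making explicit the propagation of $n$ down the chain that the paper leaves implicit.
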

\begin{proof}
    Using $I_{f_n(v)} = I_v$ for $w=f_n(v)$ shows that \eqref{(1)} implies \eqref{(2)}. Statement \eqref{(3)} follows from \eqref{(2)} by definition.
    Let $w$ now be a non-top element of its equivalence class. The element $\operatorname{top}(w)$ has $k_{\operatorname{top}(w)}=n$. By \Cref{lem: closing parentheses in equivalence class}, $w$ has $n$ in its closing index set. Thus, \eqref{(3)} implies \eqref{(1)} by contraposition.
\end{proof}

\subsection{A recursive construction}\label{sec: recursive construction}

In this section, we use the lattice congruence $q$ to construct Boolean antichains $C'$ of size $k$ in $\cT_{n+1}$ from Boolean antichains $C$ of size $k-1$ or $k$ in $\cT_n$. 
In \Cref{prop: BooleanAreValid} and \Cref{prop: ValidAreBoolean}, we will show that, under an additional technical assumption, this process yields a recursive construction of all Boolean antichains in $\cT_{n+1}$.

\begin{definition}\label{def: Almost Boolean}
    Let $C$ be an antichain of size $k\leq n$ in $\cT_{n+1}$. We call $C$ \textit{almost Boolean} if it is of the form \[C=\{f_n(c_1),\ldots,f_n(c_{k-1}),w\}\] where either $\{c_1,\ldots,c_{k-1},q_n(w)\}$ is a Boolean antichain in $\cT_n$ or $\{c_1,\ldots,c_{k-1}\}$ is a Boolean antichain in $\cT_n$ and $w \in  [\hat{1}_{\cT_{n+1}}]\setminus\{\hat{1}_{\cT_{n+1}}\}$.
    If there exists an element $w \in C$ which is not the top element of its equivalence class, we write $w(C):=w$.
\end{definition}


\begin{example}\label{ex: almost boolean}
    \Cref{fig:almostBoolean} shows an antichain of size $3$ in $\cT_4$ that is almost Boolean but not Boolean. This can easily be verified by checking that $(a\wedge w)\vee(b\wedge w) < w = \wedge(\{a,w\}\cap \{b,w\})$ for
    $$w=(x_1((x_2(x_3x_4))x_5)), a = ((x_1(x_2x_3))(x_4x_5)) \text{ and } b = ((x_1x_2)(x_3(x_4x_5))).$$
\end{example}

\begin{figure}
    \centering
    \scalebox{0.9}{
    \begin{tikzpicture}
    \draw[rounded corners = 3mm, gray](6.5, 11.2) rectangle (9.5, 11.8); 
    \draw[rounded corners = 3mm, gray] (11.5, 10.2) rectangle (14.5, 10.8); 
    \draw[rounded corners = 3mm, gray] (-1.5, 8.2) rectangle (1.5, 8.8); 
    
    \node (0) at (8, 13.5) {\footnotesize$(x_1(x_2(x_3(x_4 x_5))))$};
    \node (1) at (8, 12.5) {\footnotesize$(x_1(x_2((x_3x_4)x_5)))$};
    \node[label={[text=gray]180:$w$}] (4) at (8, 11.5) {\footnotesize$(x_1((x_2(x_3x_4))x_5))$};
    \node (8) at (8, 10.5) {\footnotesize$((x_1(x_2(x_3x_4)))x_5)$};
    
    \node[label={[text=gray]0:$b$}] (2) at (13, 10.5) {\footnotesize$((x_1x_2)(x_3(x_4x_5)))$};
    \node (6) at (13, 9.5) {\footnotesize$((x_1x_2)((x_3x_4)x_5))$};
    \node (10) at (13, 8.5) {\footnotesize$(((x_1x_2)(x_3x_4))x_5)$};
    
    \node (3) at (4, 11) {\footnotesize$(x_1((x_2x_3)(x_4x_5)))$};
    \node (5) at (4, 10) {\footnotesize$(x_1(((x_2x_3)x_4)x_5))$};
    \node (9) at (4, 9) {\footnotesize$((x_1((x_2x_3)x_4))x_5)$};
    
    \node[label={[text=gray]180:$a$}] (7) at (0, 8.5) {\footnotesize$((x_1(x_2x_3))(x_4x_5))$};
    \node (12) at (0, 7.5) {\footnotesize$(((x_1(x_2x_3))x_4)x_5)$};

    \node (11) at (6, 6) {\footnotesize$(((x_1x_2)x_3)(x_4x_5))$};
    \node (13) at (6, 5) {\footnotesize$((((x_1x_2)x_3)x_4)x_5)$};

    \draw (0) -- (1);
    \draw (0) -- (2);
    \draw (0) -- (3);
    \draw (1) -- (4);
    \draw (1) -- (6);
    \draw (2) -- (11);
    \draw (2) -- (6);
    \draw (3) -- (5);
    \draw (3) -- (7);
    \draw (4) -- (5);
    \draw (4) -- (8);
    \draw (5) -- (9);
    \draw (6) -- (10);
    \draw (7) -- (11);
    \draw (7) -- (12);
    \draw (8) -- (9);
    \draw (8) -- (10);
    \draw (9) -- (12);
    \draw (10) -- (13);
    \draw (11) -- (13);
    \draw (12) -- (13);
    \end{tikzpicture}}
    \caption{\centering Example of an almost Boolean antichain in $\cT_4$ that is not Boolean}
    \label{fig:almostBoolean}
    \end{figure}

By imposing an additional condition to the index $k_w$ of an almost Boolean antichain we get an equivalent condition for an antichain to be Boolean in $\cT_{n+1}$. This provides a recursive construction of all Boolean antichains in the lattice. 

\begin{proposition}\label{prop: BooleanAreValid}
    Let $C$ be a Boolean antichain of size $k\leq n$ in $\cT_{n+1}$. Then $C$ is an almost Boolean antichain of the form $C=\{f_n(c_1),\dots,f_n(c_{k-1}),w\}$, and there exists at most one $i \in [1, k-1]$ with $k_w \in I_{f_n(c_i)}$.
\end{proposition}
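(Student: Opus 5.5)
The plan has three parts: show that at most one element of $C$ lies outside the image of $f_n$; push $C$ down along $q_n$ to identify which of the two cases of \Cref{def: Almost Boolean} occurs; and use closing index sets to bound the number of $i$ with $k_w\in I_{f_n(c_i)}$.

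\textbf{Step 1: at most one non-top element.} By \Cref{cor: k_w = n}, the top elements of the equivalence classes are exactly the $v$ with $n\notin I_v$, and $\im(f_n)$ consists of these elements. Suppose $u\neq v$ in $C$ are both non-top, so $n\in I_u\cap I_v$. If $q(u)=q(v)$, then $u,v$ lie in the chain $[u]$ of \Cref{lem:chain}, contradicting that $C$ is an antichain. Otherwise I would use \Cref{lem:banana} together with \Cref{rem: banana lemma} to control $u\vee v$. The idea is to compare $u$ and $v$ with the elements of the same height in the classes above them, and to show that $u\vee v$ is dominated by a non-top element of $[q(u)\vee q(v)]$. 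This contradicts $u\vee v=\hat 1$ (\Cref{lem:single elements join to top}), because $\hat 1$ is the top of its class. I expect the bookkeeping of heights under the shift in case (2) of \Cref{lem:banana} to be delicate. Once this is done, write $C=\{f_n(c_1),\dots,f_n(c_{k-1}),w\}$, where $w$ is the possibly non-top element (or any element if all are tops).

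\textbf{Step 2: almost Booleanness.} Since $q_n$ is a lattice congruence (\Cref{prop: LatticeCongruence}), it commutes with $\wedge$ and $\vee$. Moreover $q_n\circ f_n=\mathrm{id}$, and $f_n$ is a lattice monomorphism by \Cref{cor: bot and top meet and join}. Applying $q_n$ to \eqref{eq: intersectivity} for $C$ therefore gives \eqref{eq: intersectivity} for $q(C)=\{c_1,\dots,c_{k-1},q(w)\}$. The two cases are as follows.
\begin{enumerate}
\item If $q(w)\neq\hat 1_{\cT_n}$, then $q(C)$ is a subset of $\cT_n\setminus\{\hat 1\}$ satisfying \eqref{eq: intersectivity}. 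We still need $|q(C)|=k$. If $q(w)=c_i$, then $w$ and $f_n(c_i)$ are comparable, which is impossible in an antichain. So $q(C)$ is Boolean by \Cref{rem:Intersective is antichain}.
\item If $q(w)=\hat 1_{\cT_n}$, then $w\in[\hat 1]\setminus\{\hat 1\}$, and $\{c_1,\dots,c_{k-1}\}$ is Boolean by the same argument applied to $S,S'$ not containing $w$.
\end{enumerate}
In both cases $C$ is almost Boolean.

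\textbf{Step 3: the condition on $k_w$.} If $w$ is a top element, then $k_w=n$ by \Cref{cor: k_w = n}, and no $I_{f_n(c_i)}=I_{c_i}$ contains $n$, so the condition holds. Otherwise, let $w'$ be the element covered by $w$ inside $[w]$. By \Cref{lem: closing parentheses in equivalence class}, $I_{w'}=I_w\cup\{k_w\}$. The key claim I would prove is:
\[
x\le w \text{ and } k_w\in I_x \implies x\le w'.
\]
I would attempt this along the lines of the proof of \Cref{lem: meet height is union}: construct an explicit lower bound by reparenthesizing the part of $w$ to the right of $x_{k_w}$, and use \Cref{cor: closing parenthesis moved to the right} along a maximal chain from $x$ to $w$.

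Granting the claim, suppose $k_w\in I_{f_n(c_i)}\cap I_{f_n(c_j)}$ with $i\neq j$. By \Cref{lem: meet height is union}, both $f_n(c_i)\wedge w$ and $f_n(c_j)\wedge w$ have $k_w$ in their closing index set and lie below $w$, so both lie below $w'$. Hence
\[
(f_n(c_i)\wedge w)\vee(f_n(c_j)\wedge w)\le w'<w=\wedge(\{f_n(c_i),w\}\cap\{f_n(c_j),w\}),
\]
contradicting \eqref{eq: intersectivity}. This mirrors \Cref{ex: almost boolean}. I expect the claim in this step, together with the join estimate in Step 1, to be the main obstacles.
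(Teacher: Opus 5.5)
Your three-part outline matches the paper's proof. The paper also shows that a second non-top element forces a join strictly below $\hat 1_{\cT_{n+1}}$, and it transports \eqref{eq: intersectivity} along $q_n$ as in your Step 2, which is fine as written. It then gets the final contradiction by showing that $w\wedge f_n(c_i)$ and $w\wedge f_n(c_j)$ both lie below $w'=[w]_{h(w)-1}$. The problem is that the two steps you postpone are where the content is, so the proposal is incomplete there.

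Step 1 is not delicate. Let $u\in\cT_{n+1}$ be non-top and $q(u)\lessdot b$ in $\cT_n$. By \Cref{lem:banana}, $u$ is covered by an element of $q^{-1}(\{b\})$ of height $h(u)$ when the two classes have equal size. When $q^{-1}(\{b\})$ is one larger, it is covered by an element of height at most $h(u)+1$. Since $h(u)\le |[u]|-2$, this element is non-top in both cases. Iterating along a maximal chain from $q(u)$ to $\hat 1_{\cT_n}$ gives $u\le[\hat 1_{\cT_{n+1}}]_{n-1}$. The same holds for $v$, so $u\vee v\le[\hat 1_{\cT_{n+1}}]_{n-1}<\hat 1_{\cT_{n+1}}$, and your case split on $q(u)=q(v)$ is unnecessary.

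For the claim in Step 3 you do not need an explicit reparenthesization. The element $a$ in the proof of \Cref{lem: meet height is union} is a lower bound that detects an index missing from a meet, whereas here you need the upper bound $x\le w'$. The missing idea is that two elements of the same $q$-class with the same closing index set are equal, because $h(v)=n-|I_v|$ and the class is a chain. So take $x\le w$ with $k_w\in I_x$:
\begin{itemize}
\item The congruence and $q(w')=q(w)$ give $q(x\wedge w')=q(x)\wedge q(w)=q(x)$.
\item \Cref{lem: meet height is union}, \Cref{lem: closing parentheses in equivalence class} and \Cref{cor: closing parenthesis moved to the right} give $I_{x\wedge w'}=I_x\cup I_w\cup\{k_w\}=I_x$.
\end{itemize}
Hence $x\wedge w'=x$, that is, $x\le w'$. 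With $x=w\wedge f_n(c_i)$ this is exactly the paper's argument, and the rest of your Step 3 goes through. Note also that $w'$ exists only when $h(w)\ge 1$. If $w$ is the bottom of its class, then $k_w=0$ lies in no closing index set and there is nothing to prove.
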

\begin{proof}
    We first show that $C$ is almost Boolean. For this, we suppose that there exist two distinct elements $a$ and $b$ in $C$ which are not the top of their equivalence class. There exists a chain
    $$
        q_n(a) \lessdot q_n(d_1)\lessdot \dots \lessdot q_n(d_\ell) \lessdot \hat{1}_{\cT_n}
    $$
    in $\cT_n$. Since $a\neq \topmap(a)$, applying \Cref{rem: banana lemma} on this chain implies that $a\leq [\hat{1}_{\cT_{n+1}}]_{n-1} \lessdot \hat{1}_{\cT_{n+1}}$. Similarly, it implies that $b \leq [\hat{1}_{\cT_{n+1}}]_{n-1} \lessdot \hat{1}_{\cT_{n+1}}$. This yields that $a\vee b < \hat{1}_{\cT_{n+1}}$, which contradicts \Cref{lem:single elements join to top}. Thus, we can assume that $C$ is of the form $C=\{f_n(c_1),\dots,f_n(c_{k-1}),w\}$. Using \Cref{prop: LatticeCongruence}, we see that \eqref{eq: intersectivity} is still fulfilled for $\{c_1,\dots,c_{k-1},q_n(w)\}$. Using \Cref{rem:Intersective is antichain} gives that $\{c_1,\dots,c_{k-1}\}$ is a Boolean antichain in $\cT_n$ and if $w \notin [\hat{1}_{\cT_{n+1}}]$, then $\{c_1,\dots,c_{k-1}, q_n(w)\}$ is a Boolean antichain in $\cT_n$ as well. It follows that $C$ is almost Boolean.

    Now suppose that there exist $i\neq j \in [1,k]$ such that $k_w \in I_{f_n(c_i)}\cap I_{f_n(c_j)}$. Let $S=\{w,f_n(c_i)\}$ and $S'=\{w,f_n(c_j)\}$. Furthermore, let $c_i':= [w]_{h(w)-1} \wedge f_n(c_i)$. By \Cref{lem: closing parentheses in equivalence class} we have $I_{[w]_{h(w)-1}} = I_w \cup \{k_w\}$. Hence, our assumption on $f_n(c_i)$ and \Cref{lem: meet height is union} give that $I_{\wedge S} = I_w \cup I_{f_n(c_i)} = I_{c_i'}$. Additionally, we know by \Cref{prop: LatticeCongruence} that $[\wedge S] = [c_i']$. As $\wedge S$ and $c'_i$ have the same closing index set and thus the same height, we conclude that $\wedge S = c_i'$. In particular, this gives $[w]_{h(w)-1} \geq \wedge S$. We can show similarly that $[w]_{h(w)-1} \geq \wedge S'$, which yields
    $$
    \left(\bigwedge S\right) \vee \left(\bigwedge S'\right) \leq [w]_{h(w)-1}\lessdot w = \bigwedge (S\cap S').
    $$
    This contradicts $C$ being Boolean.
\end{proof}

\begin{example}
    We consider $a, b$ and $w$ as in \Cref{ex: almost boolean} and compute the index sets. We see $I_a= \{1,2\}$, $I_b=\{1\}$ and $k_w=1$. Thus we get $k_w \in I_a\cap I_b$, showing again that $\{a,b,w\}$ is not Boolean by \Cref{prop: BooleanAreValid}.
\end{example}

\begin{remark}\label{rem: generalisation BooleanAreAlmostBoolean}
The first part of the proof of \Cref{prop: BooleanAreValid}, which shows that Boolean antichains are almost Boolean antichains, relies heavily on \Cref{lem:banana}, \Cref{prop: LatticeCongruence} and \Cref{rem: banana lemma}. By considering the properties required for this to hold in the most general framework, we arrive at the following result:
Let $L$ be a finite lattice and $\equiv$ a lattice congruence with projection map $q$, such that for all $a \lessdot b$ in $L/_\equiv$, every element of $q^{-1}(a)$ is covered by an element of $q^{-1}(b)$, and all equivalence classes are chains. Furthermore, let
\begin{align*}
    f: L/_\equiv &\rightarrow L \\
    a &\mapsto \max(q^{-1}(a)).
\end{align*}
If $C$ is a Boolean antichain of size $k$ in $L$, then $C$ is of the form $C=\{f(c_1),\ldots,f(c_{k-1}),w\}$ where $\{c_1,\ldots,c_{k-1}\}$ is a Boolean antichain in $L/_\equiv$ and $w \in  [\hat{1}]\setminus\{\hat{1}\}$ or $\{c_1,\ldots,c_{k-1},q(w)\}$ is a Boolean antichain in $L/_\equiv$. 
This gives a framework that encompasses both \Cref{prop: BooleanAreValid} and \Cref{rem: boolean lattice recursion}. For the latter, the lattice congruence is the map that removes the element $n$ from any subset of $[1, n]$.
\end{remark}

In order to prove the converse of \Cref{prop: BooleanAreValid}, we introduce the following notation. Let $i\leq\ell\leq n+1$ be positive integers, $C$ an almost Boolean antichain in $\cT_\ell$ and $c\in C$. We write
$$
    \pi_i^\ell(c):=q_i\circ q_{i+1}\circ \dots \circ q_{\ell-1}(c).
$$
Furthermore, for $C=\{c_1,\dots,c_k\}$, let
$$
    \pi_i^\ell(C):=\{\pi_i^\ell(c_1),\dots,\pi_i^\ell(c_k)\}\setminus \{\hat{1}_{\cT_i}\}.
$$
To ease notation, we denote $\pi_i^{n+1}(c)$ by $\pi_i(c)$ and $\pi_i^{n+1}(C)$ by $\pi_i(C)$. We observe that since $C$ is an almost Boolean antichain, it is of the form $C=\{f_{\ell-1}(c_1),\dots, f_{\ell-1}(c_{k-1}),w\}$ where either $\{c_1,\dots,c_{k-1}\}$ or $\{c_1,\dots, c_{k-1},q_{\ell-1}(w)\}$ is a Boolean antichain in $\cT_{\ell-1}$. In both cases, this Boolean antichain is equal to $\pi_{\ell-1}^\ell(C)$. \Cref{prop: BooleanAreValid} shows that $\pi_{\ell-1}^\ell(C)$ is an almost Boolean antichain. Applying the same argument, $\pi_{\ell-2}^\ell(C)$ is a Boolean antichain again. This process can be continued to see that 
\begin{equation}\label{eq: pi of C Boolean}
    \pi_i^\ell(C) \text{ is a Boolean antichain for all } i\leq \ell.
\end{equation}

Moreover, \Cref{cor: k_w = n} yields that
\begin{equation}\label{eq: I_q_i}
    \text{for } d\in \cT_\ell \text{ we have }I_{f_\ell(d)} = I_d \text{ and } I_{q_\ell(d)}\subseteq I_d
\end{equation}
with strict inclusion if and only if $d\neq \text{top}(d)$.

This process allows us to relate Boolean antichains in $\cT_\ell$ to Boolean antichains in $\cT_i$ for $i<\ell$. In the following lemma, we project an element of a Boolean antichain onto an element in a smaller Tamari lattice with a chosen index in its closing index set.

\begin{lemma}\label{lem: backtracking}
    Let $C$ be an almost Boolean antichain in $T_\ell$. For a fixed $c \in C$ and a fixed $j \in I_c$ there exists an integer $m \leq \ell$ such that
    $\pi_m^\ell(c) = w(\pi_m^\ell(C))$ and $j \in [k_{\pi_m^\ell(c)} + 1, m - 1] \subseteq I_{\pi_m^\ell(c)}$.
\end{lemma}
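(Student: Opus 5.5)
The plan is to choose $m$ as the \emph{smallest} index at which $j$ still lies in the closing index set of the projection of $c$. Set $d_i := \pi_i^\ell(c) \in \cT_i$ for $i \leq \ell$, so that $d_\ell = c$ and $d_{i-1} = q_{i-1}(d_i)$. For $v \in \cT_i$ the closing index set $I_v$ is contained in $[i-1]$. Hence $j \notin I_{d_i}$ as soon as $i \leq j$, while $j \in I_{d_\ell} = I_c$ by assumption. Let $m \leq \ell$ be minimal with $j \in I_{d_m}$. Then $j \in I_{d_m}$ and $j \notin I_{d_{m-1}} = I_{q_{m-1}(d_m)}$. By \eqref{eq: I_q_i} the inclusion $I_{q_{m-1}(d_m)} \subseteq I_{d_m}$ is strict, so $d_m \neq \topmap(d_m)$. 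In particular $d_m \neq \hat{1}_{\cT_m}$, so $d_m$ really is an element of $\pi_m^\ell(C)$.

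Next we identify $d_m$ with $w(\pi_m^\ell(C))$. For $m = \ell$ the set $\pi_\ell^\ell(C) = C$ is almost Boolean by hypothesis. For $m < \ell$, \eqref{eq: pi of C Boolean} shows that $\pi_m^\ell(C)$ is a Boolean antichain, hence almost Boolean by \Cref{prop: BooleanAreValid}. An almost Boolean antichain has all elements but one equal to tops of their classes, and $d_m$ is not a top. Therefore $d_m = w(\pi_m^\ell(C))$.

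It remains to show $j \in [k_{d_m}+1, m-1] \subseteq I_{d_m}$. The inclusion holds for every element: $k_{d_m}$ is the maximum of $[m-1] \setminus I_{d_m}$ (or $0$), so every index above it lies in $I_{d_m}$. For $j > k_{d_m}$, write the class $[d_m]$ as a chain $d_m = v_0 \lessdot v_1 \lessdot \dots \lessdot v_r = \topmap(d_m)$ (\Cref{lem:chain}). By \Cref{lem: closing parentheses in equivalence class} we have $I_{v_{t}} = I_{v_{t+1}} \cup \{k_{v_{t+1}}\}$, with $k_{v_{t+1}} \notin I_{v_{t+1}}$.

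Consequently $I_{d_m} \setminus I_{\topmap(d_m)} = \{k_{v_1}, \dots, k_{v_r}\}$. Moreover, the complement of $I_{v_t}$ is the complement of $I_{v_{t+1}}$ with its maximum $k_{v_{t+1}}$ removed, so $k_{v_t} < k_{v_{t+1}}$. Thus $k_{d_m} < k_{v_1} < \dots < k_{v_r}$. Since $I_{f_{m-1}(v)} = I_v$, we have $I_{\topmap(d_m)} = I_{f_{m-1}(q_{m-1}(d_m))} = I_{d_{m-1}}$. Our choice of $m$ therefore gives $j \in I_{d_m} \setminus I_{\topmap(d_m)}$, so $j = k_{v_t}$ for some $t \geq 1$, and hence $j > k_{d_m}$. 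Together with $j \leq m-1$ this finishes the argument.

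The main point requiring care is the monotonicity of $k$ along the class: the indices removed when moving up the banana are exactly $k_{v_1} < \dots < k_{v_r}$, all exceeding $k_{d_m}$. The rest is bookkeeping with \eqref{eq: I_q_i} and \eqref{eq: pi of C Boolean}.
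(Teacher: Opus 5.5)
Your proof is correct and follows essentially the same route as the paper: you track $I_{\pi_i^\ell(c)}$ under successive applications of $q$, use the chain $[d_m]$ and \Cref{lem: closing parentheses in equivalence class} to see that the indices lost are $k_{v_1}<\dots<k_{v_r}$, all exceeding $k_{d_m}$, and take $m$ to be the step at which $j$ drops out of the closing index set. Your choice $m=\min\{i : j\in I_{d_i}\}$ is exactly the index the paper intends (its displayed $\max\{i : j\notin I_{c_i}\}$ is one too small), and you handle the case $m=\ell$ separately, where $C$ is only almost Boolean rather than Boolean, which the paper glosses over.
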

\begin{proof}
    For all $\iota\leq \ell$, we denote $\pi_\iota^\ell(c)$ by $c_\iota$. Now fix $i\leq \ell$. If $c_i = \topmap(c_i)$, then \eqref{eq: I_q_i} shows that $I_{c_{i-1}} = I_{c_i}$. If $c_i \neq \topmap(c_i)$, then let $y = \topmap(c_i)$ be the top element of the equivalence class. We know $I_{c_{i-1}} = I_y$ and there exists a chain $c_i =: y_1 \lessdot \dots \lessdot y_t := y$. By \Cref{lem: closing parentheses in equivalence class}, we have $I_{y_{s-1}} = I_{y_s} \cup \{k_{y_s} \}$. Since $k_{y_{s-1}}$ is the biggest index not included in $I_{y_{s-1}}$, we also have $k_{y_s} > k_{y_{s-1}}$. In total, this results in
    \begin{equation}\label{eq: index set change}
        I_{c_{i-1}} = I_{c_i} \setminus \{k_{y_2} , \dots, k_{y_t} \}, \quad \text{with} \quad k_{y_t} > \dots > k_{y_2} > k_{c_i}.
    \end{equation}
    Therefore, if $j\in I_{c_i}\setminus I_{c_{i-1}}$, then $c_i \neq \topmap(c_i)$ and $j \in [k_{c_i}+1, i-1]$. In that case we have $c_i = w(\pi_i^\ell(C))$.
    By applying $q$ successively, we eventually get $c_i = \hat{1}_{\cT_i}$. But then the index $j$ does not lie in $I_{c_i} = \varnothing$, so we can set $m:= \max\{i\colon j\notin I_{c_i}\}$.
\end{proof}

\begin{notation*}
    In the setting of \ref{lem: backtracking}, we denote the maximal integer $m \leq \ell$ that satisfies the assertion of the lemma by $n(c,j)$.
\end{notation*}

The following lemma allows for better control over the index $k_{\pi_{n(c,j)}^\ell(c)}$.

\begin{lemma}\label{lem: forwarding k}
    Let $C$ be an almost Boolean antichain in $T_\ell$, $c \in C$ and $j \in I_c$. Then we have
    \[
        k_{\pi_{n(c,j)}^\ell(c)} \notin I_c.
    \]
\end{lemma}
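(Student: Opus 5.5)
The plan is to argue by contradiction, following the index sets of the successive projections upward from level $m:=n(c,j)$ to level $\ell$. As in the proof of \Cref{lem: backtracking}, write $c_i:=\pi_i^\ell(c)$ for $i\le \ell$, so that $c_\ell=c$, and put $k':=k_{c_m}$.

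First I record what \Cref{lem: backtracking} gives at level $m$. We have $c_m=w(\pi_m^\ell(C))$, so $c_m$ is not the top element of its class. We also have $j\in[k'+1,m-1]\subseteq I_{c_m}$, and in particular $k'<j$. By the definition of $k_{c_m}$, $k'\notin I_{c_m}$.

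Next I use the monotonicity from \eqref{eq: I_q_i}: $I_{c_{i-1}}\subseteq I_{c_i}$ for every $i$. So along $c_m,c_{m+1},\dots,c_\ell$ the closing index sets only grow. Consequently $j\in I_{c_i}$ for all $i\ge m$. If $k'\in I_c=I_{c_\ell}$, then there is a smallest level $i\in[m+1,\ell]$ with $k'\in I_{c_i}\setminus I_{c_{i-1}}$.

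At that level I apply \eqref{eq: index set change}. Since $I_{c_{i-1}}\neq I_{c_i}$, the element $c_i$ is not the top of its class, and hence $c_i=w(\pi_i^\ell(C))$. Moreover every index of $I_{c_i}\setminus I_{c_{i-1}}$ is strictly larger than $k_{c_i}$; in particular $k_{c_i}<k'<j$. Because $k_{c_i}$ is the largest index in $[i-1]$ missing from $I_{c_i}$, the whole interval $[k_{c_i}+1,i-1]$ lies in $I_{c_i}$. Also $j\in I_{c_i}$ forces $j\le i-1$ (indices of $I_{c_i}$ lie in $[i-1]$), so $j\in[k_{c_i}+1,i-1]$. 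Thus the level $i>m$ satisfies the assertion of \Cref{lem: backtracking}, contradicting the maximality of $m=n(c,j)$. Hence $k'\notin I_c$.

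The main obstacle is bookkeeping rather than a real difficulty. One has to check carefully that "maximal $m$ satisfying the assertion" is the notion being contradicted: the proof of \Cref{lem: backtracking} picks $m$ via $\max\{i: j\notin I_{c_i}\}$, and that choice is off by one, whereas the Notation uses the assertion itself. One also has to confirm from \eqref{eq: index set change} that newly added indices exceed $k_{c_i}$ and that $k_{c_i}$ and the interval are computed in the appropriate lattice $\cT_i$, with index range $[i-1]$. I would state these conventions explicitly before running the argument.
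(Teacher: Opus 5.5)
Your proposal is correct and follows the paper's proof. Both assume $k_{\pi^\ell_{n(c,j)}(c)}\in I_c$, pick a level $i>n(c,j)$ where this index enters the closing index set, and use \eqref{eq: index set change} to get $k_{c_i}<k_{\pi^\ell_{n(c,j)}(c)}<j\le i-1$, which contradicts the maximality of $n(c,j)$. Your extra checks that $j\le i-1$ and $[k_{c_i}+1,i-1]\subseteq I_{c_i}$ are accurate and only make explicit what the paper leaves implicit. You are also right that $\max\{i: j\notin I_{c_i}\}$ in the proof of \Cref{lem: backtracking} is off by one and should be read via the Notation's definition.
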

\begin{proof}
    Let $w := \pi_{n(c,j)}^\ell(c)$ and $c_i := \pi_i^\ell(c)$ for all $n(c,j) \leq i \leq \ell$. We assume $k_w \in I_c$. By definition, $k_w$ does not lie in $I_w$ so there exists an $i > n(c,j)$ with $k_w \in I_{c_i} \setminus I_{c_{i-1}}$. Since $k_w$ vanishes from $I_{c_i}$, we know from \eqref{eq: I_q_i} that $c_i \neq \topmap(c_i)$, and from \eqref{eq: index set change} that $k_w \in [k_{c_i} + 1, i-1]$. The definition of $n(c,j)$ gives $k_w < j < n(c,j)$. Therefore, we have $j \in [k_{c_i} + 1, i - 1]$. This is a contradiction to the maximality of $n(c,j)$.
\end{proof}

With this we can now complete the recursive description of Boolean antichains.

\begin{proposition}\label{prop: ValidAreBoolean}
    Let $C=\{f_n(c_1),\dots,f_n(c_{k-1}),w\}$ be an almost Boolean antichain of size $k\leq n$ in $\cT_{n+1}$. Suppose that there exists at most one $i \in [1,k-1]$ with $k_w \in I_{f_n(c_i)}$. Then $C$ is a Boolean antichain.
\end{proposition}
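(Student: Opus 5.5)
We have to verify \eqref{eq: intersectivity} for all $S,S'\subseteq C$. Since $\wedge(S\cap S')\geq (\wedge S)\vee(\wedge S')$ always holds, only the reverse inequality needs proof. We use the lattice congruence $q=q_n$ to reduce most of the work to $\cT_n$, where the relevant antichain is Boolean by the almost Boolean assumption.

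\textbf{Reduction to one equivalence class.} Let $D\subseteq\cT_n$ be the Boolean antichain from \Cref{def: Almost Boolean}: either $\{c_1,\dots,c_{k-1},q(w)\}$, or $\{c_1,\dots,c_{k-1}\}$ when $q(w)=\hat 1_{\cT_n}$. By \Cref{prop: LatticeCongruence}, $q$ commutes with meets and joins, so
\[
q\bigl((\wedge S)\vee(\wedge S')\bigr)=q\bigl(\wedge(S\cap S')\bigr).
\]
Hence both sides of \eqref{eq: intersectivity} lie in the same equivalence class, and they can only differ in height. By \Cref{lem:chain} and the formula $h(v)=n-|I_v|$, it suffices to compare closing index sets.

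\textbf{Case $w\notin S\cap S'$.} Here $\wedge(S\cap S')$ is a meet of elements $f_n(c_i)$, so by \Cref{cor: bot and top meet and join} it is a top element. If $w\notin S\cup S'$, all elements involved are tops. The map $f_n$ is a lattice morphism, so both sides equal $f_n$ applied to the corresponding identity in $D$, and we are done.

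If instead $w$ lies in exactly one of the sets, say $w\in S$, we must show the join is a top element. Since $S'\subseteq\{f_n(c_i)\}$, the element $\wedge S'$ is a top element. In the same class, $\topmap(\wedge S)\vee\wedge S'$ is a top element. I would show that the join of an arbitrary element with a top element in a common class $\geq$ is already a top element, using \Cref{rem: banana lemma}: tops are only covered by tops, and joins can be computed along cover chains. This step needs care, and I would instead argue via \Cref{cor: k_w = n}, showing $n\notin I_{(\wedge S)\vee(\wedge S')}$.

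\textbf{Case $w\in S\cap S'$ (the main obstacle).} Write $S=\{w\}\cup A$ and $S'=\{w\}\cup B$ with $A\cap B=:M$. By \Cref{lem: meet height is union},
\[
I_{\wedge S}=I_w\cup I_A,\qquad I_{\wedge S'}=I_w\cup I_B,
\]
where $I_A=\bigcup_{a\in A}I_a$, and similarly for $I_B$. The target $\wedge(S\cap S')$ has closing index set $I_w\cup I_M$.

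We need a formula for the closing index set of a join, at least inside one class. Within a class, heights are totally ordered, so the join has the smaller height, namely the larger value of $n-|I|$. Thus we must show $\max(|I_{\wedge S}|,|I_{\wedge S'}|)$ can drop to $|I_w\cup I_M|$. This is false in general, which is exactly why the hypothesis is needed. Two indices must be handled:
\begin{itemize}
\item The index $k_w$, which lies in at most one $I_{f_n(c_i)}$. Hence it lies in at most one of $I_A\setminus I_M$ and $I_B\setminus I_M$.
\item The indices of $I_A$ below $k_w$, which are controlled by the Boolean property in $\cT_n$.
\end{itemize}

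The plan is to use the structure of a class as the chain $[w]_0<\dots<[w]_{h}$, where $I_{[w]_i}$ adds the successive indices $k$ by \Cref{lem: closing parentheses in equivalence class}. For each index $j\in I_A\setminus(I_w\cup I_M)$, use \Cref{lem: backtracking} and \Cref{lem: forwarding k} to locate the smaller lattice $\cT_{n(c,j)}$ in which the projected antichain $\pi_{n(c,j)}(C)$ is Boolean, by \eqref{eq: pi of C Boolean}. The inductive hypothesis on $n$ then shows that such an extra index cannot survive in both $\wedge S$ and $\wedge S'$.

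The hypothesis on $k_w$ rules out the remaining top-level obstruction, which is exactly the one exhibited in \Cref{prop: BooleanAreValid}. I expect the main difficulty to be this bookkeeping of which extra indices of $I_A$ and $I_B$ occur and how they are forced by heights.
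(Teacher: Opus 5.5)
Your reduction to a single equivalence class is correct, and so is the case $w\notin S\cap S'$. If, say, $w\in S\setminus S'$, then $\wedge S'$ is a top element (or $\hat{1}$), so $n\notin I_{\wedge S'}\supseteq I_{(\wedge S)\vee(\wedge S')}$ by \Cref{cor: closing parenthesis moved to the right}. \Cref{cor: k_w = n} then shows that both sides are the top of the same class.

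The gap is in the main case $w\in S\cap S'$. There you reason about the join through $|I_{\wedge S}|$ and $|I_{\wedge S'}|$ as if $\wedge S$, $\wedge S'$ and their join lay in one chain. In general they lie in three different classes. All you know is $I_{(\wedge S)\vee(\wedge S')}\subseteq I_{\wedge S}\cap I_{\wedge S'}$, and this inclusion is often strict. As a result, the claim you intend to prove is false: it is not true that an index $j\in I_A\setminus(I_w\cup I_M)$ cannot lie in both $I_{\wedge S}$ and $I_{\wedge S'}$. Here is a counterexample in $\cT_4$.
\begin{itemize}
\item Take $c=((x_1x_2)(x_3(x_4x_5)))$, $c'=((x_1(x_2x_3))(x_4x_5))$ and $w=(x_1(x_2((x_3x_4)x_5)))$.
\item These carry the valid labelling $\{(0,0),(1,0),(2,2)\}$, so $C=\{c,c',w\}$ satisfies the hypothesis and is Boolean.
\item We have $I_c=\{1\}$, $I_{c'}=\{1,2\}$, $I_w=\{3\}$ and $k_w=2$.
\item For $S=\{w,c\}$ and $S'=\{w,c'\}$ one gets $M=\varnothing$, $I_{\wedge S}=\{1,3\}$ and $I_{\wedge S'}=\{1,2,3\}$.
\item So $j=1$ survives in both, yet $(\wedge S)\vee(\wedge S')=w$ has closing index set $\{3\}$.
\end{itemize}

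The missing idea is that only a single index matters. By \Cref{lem: closing parentheses in equivalence class}, every element of the class strictly below $\wedge(S\cap S')$ contains $k:=k_{\wedge(S\cap S')}$ in its closing index set. So it suffices to show $k\notin I_{\wedge S}$ or $k\notin I_{\wedge S'}$. Since $k\notin I_w\cup I_M$, it is enough to show that at most one element of $C$ has $k$ in its closing index set.

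The real work is determining $k$. It equals $k_w$ only when the unique $f_n(c_i)$ with $k_w\in I_{f_n(c_i)}$ lies outside $S\cap S'$ or does not exist; otherwise $k$ is strictly smaller. The paper locates $k$ as follows:
\begin{itemize}
\item It iterates \Cref{lem: backtracking} to build elements $a_1,\dots,a_r\in S\cap S'$ whose projections are the non-top elements $w(\pi_m(C))$ of the Boolean antichains $\pi_m(C)$, for strictly decreasing $m$.
\item At each level it applies \Cref{prop: BooleanAreValid} to $\pi_m(C)$ to get uniqueness there. This uses the already proved converse direction, not an induction hypothesis on the present statement.
\item It lifts that uniqueness back to $C$ using \Cref{lem: forwarding k}.
\item It shows $k=\overline{k_{d_r}}$ using the intervals supplied by \Cref{lem: backtracking}.
\end{itemize}
Your proposal has neither the one-index reduction nor any way to identify $k_{\wedge(S\cap S')}$ and prove it lies in at most one closing index set. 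So the main case remains unproved.
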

\begin{proof}
    Let $S$ and $S'$ be subsets of $ C$. As $C$ is an almost Boolean antichain, \eqref{eq: intersectivity} is fulfilled for $\{c_1, \dots, c_{k-1}, q_n(w)\}$. Since $q$ is a lattice congruence (see \Cref{prop: LatticeCongruence}), the equality $[(\wedge S) \vee (\wedge S')] = [\wedge (S\cap S')]$ holds. By definition, we have $f_n(c_i) = \text{top}(f_n(c_i))$. If $w$ is in at most one of $S, S'$, one can use \Cref{lem:banana} and \Cref{cor: bot and top meet and join} to show that $(\wedge S) \vee (\wedge S')$ and $\wedge (S\cap S')$ have the same height. To do so in general, we compute $k_{\wedge (S\cap S')}$ and argue that $k_{\wedge (S\cap S')}\not\in I_{(\wedge S) \vee (\wedge S')}$, in order to apply \Cref{lem: closing parentheses in equivalence class}. The proof is divided into four steps.

    \noindent{\sf Step 1.}
    We first construct a sequence $(a_i)_{1\leq i\leq r}$ of elements in $C$ and a sequence $(d_i)_{0\leq i\leq r}$ of corresponding elements in different $\pi_{j_i}(C)$. To do this, we iteratively use the process described in \Cref{lem: backtracking} to project the latest $d_i$ onto a smaller Tamari lattice $\cT_m$, where it is a non-top element of its equivalence class. 
    If $i\geq 0$ we denote the index $k_{\pi_m(a_i)}$ by $\overline{k_{d_i}}$ where $m\leq n +1$ will be clear from the context. For $i = -1$, we set $\overline{k_{d_{-1}}} = k_{w}+1$.
    
    First, let $d_0$ be $w$. By the assumption on $C$, there exists at most one element $d_1 \in C$ with $k_w \in I_{d_1}$ and we define $a_1:=d_1$.
    Now suppose we have $a_1, \dots, a_{i}$ and $d_0, \dots, d_{i}$ satisfying $\overline{k_{d_{i-1}}}\in I_{d_{i}}$, and $\pi_{n(d_{i-1}, \overline{k_{d_{i-2}}})}(a_{i}) = d_{i}$.
    We construct $d_{i+1}$ and $a_{i+1}$. By \Cref{lem: backtracking}, there exists $n(d_i,\overline{k_{d_{i-1}}}) < n(d_{i-1},\overline{k_{d_{i-2}}})$ such that \[\pi_{n(d_i, \overline{k_{d_{i-1}}})}^{n(d_{i-1}, \overline{k_{d_{i-2}}})}(d_i) = w(\pi_{n(d_{i}, \overline{k_{d_{i-1}}})}(C)).\]
    By \eqref{eq: pi of C Boolean}, $\pi_{n(d_i,\overline{k_{d_{i-1}}})}(C)$ is a Boolean antichain. Hence, by \Cref{prop: BooleanAreValid}, there exists at most one $d_{i+1} \in \pi_{n(d_i,\overline{k_{d_{i-1}}})}(C)$ with $\overline{k_{d_i}} \in I_{d_{i+1}}$. If it exists, then there exists $a_{i+1} \in C$ with $\pi_{n(d_i, \overline{k_{d_{i-1}}})}(a_{i+1}) = d_{i+1}$. This $a_{i+1}$ is unique: If there exists another $a \in C$ such that $\pi_{n(d_i, \overline{k_{d_{i-1}}})}(a) = d_{i+1}$, then the two elements $\pi_{n(d_i, \overline{k_{d_{i-1}}})+1}(a_{i+1})$ and $\pi_{n(d_i, \overline{k_{d_{i-1}}})+1}(a)$ are in the same equivalence class. However, this gives two comparable elements in an antichain, so they must be equal. Applying this argument repeatedly yields $a = a_{i+1}$.
    
    We stop this process as soon as either there exists no $d_{r+1}$ such that $\overline{k_{d_r}} \in I_{d_{r+1}}$ holds, or $a_{r+1} \notin S \cap S'$. Since $n(d_i,\overline{k_{d_{i-1}}})$ strictly decreases in every iteration, one of these two cases occurs after a finite number of iterations.  
    Moreover, we have by \Cref{lem: backtracking} that
    \begin{equation}\label{eq: k_d_j}
        \overline{k_{d_j}} \in [\overline{k_{d_{j+1}}}+1,n(d_{j+1},\overline{k_{d_j}})-1] \text{ for all } 0 \leq j \leq r-1.
    \end{equation}

    \noindent{\sf Step 2.} We now show that for each $i$ there exists at most one $a \in C$ with $\overline{k_{d_i}} \in I_a$ and that, if it exists, then $i\leq r$, $d_{i+1}$ exists and $a$ must be $a_{i+1}$. In particular, this implies $\overline{k_{d_i}} \in I_{a_{i+1}}$.

    We proceed by induction on $i$. By the assumption of the theorem, the initial step holds, in which $a_1=d_1$ is in $C$. For the induction step, let $a \in C$ such that $\overline{k_{d_i}} \in I_a$. By \Cref{lem: backtracking}, there exists $n(a,\overline{k_{d_i}})$ such that
    \begin{equation}\label{eq: I_a_overline}
        \overline{k_{d_i}} \in [\overline{k_a}+1,n(a,\overline{k_{d_i}})-1] \subseteq I_{\pi_{n(a,\overline{k_{d_i}})}(a)}.    
    \end{equation}
    Suppose that $n(a,\overline{k_{d_i}}) > n(d_i,\overline{k_{d_{i-1}}})$, then \eqref{eq: k_d_j} and \eqref{eq: I_a_overline} yield that
    $$
        \overline{k_a}+1 \leq \overline{k_{d_i}} < \overline{k_{d_{i-1}}} < n(d_i,\overline{k_{d_{i-1}}}) \leq n(a,\overline{k_{d_i}})-1.
    $$
    We use \eqref{eq: I_a_overline} and \eqref{eq: I_q_i} to see that $\overline{k_{d_{i-1}}} \in I_a$. Since $\overline{k_{d_i}} \notin I_{a_i}$ holds by \Cref{lem: forwarding k}, but $\overline{k_{d_i}} \in I_a$ by assumption, $a$ and $a_i$ must be distinct. But by the induction hypothesis, $a_i$ is the only element of $C$ with $\overline{k_{d_{i-1}}}$ in its closing index set, yielding a contradiction. This shows that $n(a,\overline{k_{d_i}}) \leq n(d_i,\overline{k_{d_{i-1}}})$. Let $a'$ in $\cT_{n(d_i,\overline{k_{d_{i-1}}})}$ be such that $a' = \pi_{n(d_i,\overline{k_{d_{i-1}}})}(a)$. Then \eqref{eq: I_q_i} and \eqref{eq: I_a_overline} show that the integer $\overline{k_{d_i}}$ lies in $I_{a'}$. Thus, there exists an element in $\cT_{n(d_i,\overline{k_{d_{i-1}}})}$ with $\overline{k_{d_i}}$ in its index set. But since there exists at most one such element, namely $d_{i+1}$, we have $a'=d_{i+1}$. By the same argument as in {\sf Step 1}, $a$ is equal to $a_{i+1}$.

    \noindent{\sf Step 3.} We now compute $k_{\wedge S\cap S'}$.
    Let $(a_i)_{1\leq i\leq r}$ and $(d_i)_{0 \leq i\leq r}$ be the sequences constructed in {\sf Step 1}. By {\sf Step 2}, if there exists no $d_{r+1}$, then there also exists no $a \in C$ with $\overline{k_{d_r}} \in I_a$. On the other hand, if $d_{r+1}$ exists and $a_{r+1}\notin S\cap S'$, then $a_{r+1}$ is the only element of $C$ such that $\overline{k_{d_r}} \in I_{a_{r+1}}$. In both cases, \Cref{lem: meet height is union} can be applied to obtain $\overline{k_{d_r}} \notin I_{\wedge(S\cap S')}$. Recall that
    $$
    a_r,a_{r-1},\dots,a_1,w \in S\cap S'.
    $$
    Hence, \Cref{lem: meet height is union}, \Cref{lem: backtracking} and \eqref{eq: I_q_i} yield that
    \begin{align*}
        &[\overline{k_{d_r}}+1,n(d_r,\overline{k_{d_{r-1}}})-1] \cup [\overline{k_{d_{r-1}}}+1,n(d_{r-1},\overline{k_{d_{r-2}}})-1]\; \cup \\ &\dots \cup [\overline{k_{d_1}}+1,n(d_1,k_w)-1] \cup 
    [k_w+1,n]\subseteq I_{\wedge(S\cap S')}.
    \end{align*}
    Together with \eqref{eq: k_d_j}, this gives $j \in I_{\wedge(S\cap S')}$ for all $\overline{k_{d_r}}<j\leq n$. Hence, it follows that 
    $$
        k_{\wedge (S \cap S')} = \overline{k_{d_r}}.
    $$
    
    \noindent{\sf Step 4.} As in {\sf Step 3}, we observe that either no element of $C$ contains $\overline{k_{d_{r}}}$ in its closing index set or $a_{r+1}$ exists and $a_{r+1}\notin S\cap S'$. In the latter case, we can assume that $a_{r+1} \notin S$. Hence, no element in $S$ has $\overline{k_{d_r}}=k_{\wedge (S \cap S')}$ in its closing index set. By \Cref{lem: meet height is union}, the closing index set $I_{\wedge S}$ also does not contain $k_{\wedge (S \cap S')}$. Moreover, by \Cref{cor: closing parenthesis moved to the right}, we have 
    \[I_{(\wedge S)\vee (\wedge S')} \subseteq I_{\wedge S},\]
    so $k_{\wedge (S \cap S')}$ is not in $I_{(\wedge S)\vee (\wedge S')}$. Recall that $(\wedge S)\vee (\wedge S') \leq \wedge(S\cap S')$ holds and that $(\wedge S)\vee (\wedge S')$ and $ \wedge(S\cap S')$ lie in the same equivalence class. By \Cref{lem: closing parentheses in equivalence class}, any element lying strictly below $\wedge (S\cap S')$ contains $k_{\wedge(S\cap S')}$ in its closing index set. Hence $\wedge (S\cap S')$ and $(\wedge S) \vee (\wedge S')$ are equal, showing that $C$ satisfies \eqref{eq: intersectivity}. Thus, $C$ is a Boolean antichain. 
\end{proof}

Using \Cref{prop: BooleanAreValid}, we determine a recursive upper bound for the number of Boolean antichains by counting the almost Boolean antichains of size $k$ in $\cT_{n+1}$.

\begin{corollary}\label{lem: UpperBoundsNumberBooleanAntichains}
We have
\begin{equation*}
    |\cC_k(\cT_{n+1})| \leq \sum_{\{c_1,\dots,c_k\}\in \cC_k(\cT_n)} \Bigl( |[f_n(c_1)]| + \dots + |[f_n(c_k)]|\Bigr) - (k-1) |\cC_k(\cT_n)| + n \cdot |\cC_{k-1}(\cT_n)|.
\end{equation*}

\end{corollary}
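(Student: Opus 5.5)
By \Cref{prop: BooleanAreValid}, every Boolean antichain of size $k\leq n$ in $\cT_{n+1}$ is almost Boolean. It therefore suffices to bound the number of almost Boolean antichains of size $k$ in $\cT_{n+1}$ by the right-hand side. By \Cref{def: Almost Boolean}, an almost Boolean antichain $C=\{f_n(c_1),\dots,f_n(c_{k-1}),w\}$ is of one of two types. In type (A), the set $D:=\{c_1,\dots,c_{k-1},q_n(w)\}$ lies in $\cC_k(\cT_n)$. In type (B), we have $\{c_1,\dots,c_{k-1}\}\in\cC_{k-1}(\cT_n)$ and $w\in[\hat{1}_{\cT_{n+1}}]\setminus\{\hat{1}_{\cT_{n+1}}\}$. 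I will count the antichains of each type by mapping them to the data they are built from, and then add the two counts.

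For type (B), the antichain $C$ is determined by the pair consisting of the Boolean antichain $\{c_1,\dots,c_{k-1}\}$ and the element $w$. This holds because $f_n$ is injective and $w$ is the unique element of $C$ that is not a top, or, if $w$ were a top, it would be read off as the unique element of $C$ in $[\hat{1}]$. By \Cref{size of equivalence class}, the class $[\hat{1}_{\cT_{n+1}}]$ has $n+1$ elements, so there are $n$ choices for $w$. This contributes at most $n\cdot|\cC_{k-1}(\cT_n)|$.

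For type (A), fix $D=\{c_1,\dots,c_k\}\in\cC_k(\cT_n)$. Every almost Boolean antichain mapping to $D$ arises by choosing one index $i$, taking $w$ to be any element of $[f_n(c_i)]=q_n^{-1}(\{c_i\})$, and replacing every other $c_j$ by $f_n(c_j)$. This gives at most $\sum_i |[f_n(c_i)]|$ sets. However, choosing $w=f_n(c_i)$, the top of its class, produces the same set $f_n(D)$ for every $i$. This single set is therefore counted $k$ times, and we subtract $k-1$ to correct for it. Hence each $D$ contributes at most $\sum_i|[f_n(c_i)]|-(k-1)$.

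Summing over all $D\in\cC_k(\cT_n)$ and adding the type (B) count yields exactly the stated inequality. Note that the map $C\mapsto D$ is well defined, since $D=\pi_n(C)$ is obtained by applying $q_n$ elementwise. So no antichain of type (A) is counted under two different $D$, and the only overcounting within a fixed $D$ is the coincidence of the top choices removed above. Possible overlap between types (A) and (B) only helps, since we want an upper bound.

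The only point needing care is the subtraction of $k-1$. I expect this step to be the main obstacle: one has to check that for distinct choices $(i,w)$ with $w$ not a top, the resulting sets are distinct, and distinct from $f_n(D)$. This follows because $w$ is then the unique non-top element of the set, and $q_n(w)=c_i$ recovers $i$.
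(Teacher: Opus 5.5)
Your proposal is correct and is essentially the paper's argument. You bound by the number of almost Boolean antichains via \Cref{prop: BooleanAreValid}, count those containing a non-top element of $[\hat{1}_{\cT_{n+1}}]$ by $n\cdot|\cC_{k-1}(\cT_n)|$, count those projecting to some $D\in\cC_k(\cT_n)$ by choosing which class supplies $w$, and subtract $k-1$ because the all-top antichain $f_n(D)$ is produced $k$ times; the distinctness check you flag as the main obstacle is not needed, since overcounting never hurts an upper bound.
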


\begin{proof}
    Using \Cref{prop: BooleanAreValid}, we bound the cardinality of $\cC_k(\cT_{n+1})$ by the number of almost Boolean antichains.
    The rightmost term in the formula counts the almost Boolean antichains of size $k$ in $\cT_{n+1}$ that contain an element that belongs to $[\hat{1}_{\cT_{n+1}}]$. The remaining terms count the almost Boolean antichains of size $k$ in $\cT_{n+1}$ that project to a Boolean antichain of size $k$ in $\cT_{n}$. At most one element of these antichains is not the top of its equivalence class. This yields the leftmost term of the upper bound. Boolean antichains in which all elements are top elements are counted $k$ times. Therefore, we must subtract $(k-1) \cdot |\cC_k(\cT_n)|$.
\end{proof}

Equality holds for the cases $k \in \{0,1,2\}$. In all other cases, it can be verified that this bound is not tight. This is because the set of Boolean antichains in $\cT_{n+1}$ is a proper subset of the set of almost Boolean antichains in $\cT_{n+1}$ (see \Cref{fig:almostBoolean}).

\subsection{Boolean antichains of maximal size}\label{sec: Maximal size}
In this section, we count the Boolean antichains of maximal size in $\cT_{n+1}$. We showed in \Cref{lem:size bounds} that the size of Boolean antichains is bounded by the number of elements covered by $\hat{1}$ in $\cT_{n+1}$. Here, we notice that the set $\cov(\hat{1})$ has size $n$. To count the Boolean antichains of size $n$, we introduce a recursive labelling of their elements.

\begin{definition}
Let $((x_1x_2)x_3) \in \cT_2$ be labelled with the label $(0,0)$. For $k\geq 2$, label every non-top element $v$ of $[\hat{1}_{\cT_k}]$ with $(k-2,h(v))$. Additionally, if $w$ is a labelled element in $\cT_{k-1}$, we label $f_{k-1}(w)$ with the label of $w$.
\end{definition}

Note that each label is only used once for a fixed $\cT_{n+1}$. Moreover, since $[\hat{1}_{\cT_{n+1}}]\setminus\{\hat{1}_{\cT_{n+1}}\}$ has size $n$ (see \Cref{size of equivalence class}), the second entry of each label is always at most as large as the first entry. \Cref{fig: tamari lattice with labels} illustrates the labelling of $\cT_4$. The labels $(1,0)$ and $(1,1)$ are lifted from the non-top elements of $[\hat{1}_{\cT_3}]$ and the label $(0,0)$ is lifted twice from the non-top element of $[\hat{1}_{\cT_2}]$.

\begin{figure}
    \centering
    \scalebox{0.9}{
    \begin{tikzpicture}
    \node (0) at (8, 13.5) {\footnotesize$(x_1(x_2(x_3(x_4 x_5))))$};
    \node (1) at (8, 12.5) {\footnotesize$(x_1(x_2((x_3x_4)x_5)))$};
    \node (4) at (8, 11.5) {\footnotesize$(x_1((x_2(x_3x_4))x_5))$};
    \node (8) at (8, 10.5) {\footnotesize$((x_1(x_2(x_3x_4)))x_5)$};
    
    \node (2) at (13, 10.5) {\footnotesize$((x_1x_2)(x_3(x_4x_5)))$};
    \node (6) at (13, 9.5) {\footnotesize$((x_1x_2)((x_3x_4)x_5))$};
    \node (10) at (13, 8.5) {\footnotesize$(((x_1x_2)(x_3x_4))x_5)$};
    
    \node (3) at (4, 11) {\footnotesize$(x_1((x_2x_3)(x_4x_5)))$};
    \node (5) at (4, 10) {\footnotesize$(x_1(((x_2x_3)x_4)x_5))$};
    \node (9) at (4, 9) {\footnotesize$((x_1((x_2x_3)x_4))x_5)$};
    
    \node (7) at (0, 8.5) {\footnotesize$((x_1(x_2x_3))(x_4x_5))$};
    \node (12) at (0, 7.5) {\footnotesize$(((x_1(x_2x_3))x_4)x_5)$};

    \node (11) at (6, 6) {\footnotesize$(((x_1x_2)x_3)(x_4x_5))$};
    \node (13) at (6, 5) {\footnotesize$((((x_1x_2)x_3)x_4)x_5)$};

    \draw (0) -- (1);
    \draw (0) -- (2);
    \draw (0) -- (3);
    \draw (1) -- (4);
    \draw (1) -- (6);
    \draw (2) -- (11);
    \draw (2) -- (6);
    \draw (3) -- (5);
    \draw (3) -- (7);
    \draw (4) -- (5);
    \draw (4) -- (8);
    \draw (5) -- (9);
    \draw (6) -- (10);
    \draw (7) -- (11);
    \draw (7) -- (12);
    \draw (8) -- (9);
    \draw (8) -- (10);
    \draw (9) -- (12);
    \draw (10) -- (13);
    \draw (11) -- (13);
    \draw (12) -- (13);

    \draw[rounded corners = 1mm, gray, fill=llgray] (1.4, 8.4) rectangle (2.2, 9);
    \node at (1.8,8.7) {\footnotesize$(1,0)$};
    \draw[rounded corners = 1mm, gray, fill=llgray] (5.4, 10.9) rectangle (6.2, 11.5);
    \node at (5.8,11.2) {\footnotesize$(1,1)$};
    \draw[rounded corners = 1mm, gray, fill=llgray] (9.4, 12.4) rectangle (10.2, 13);
    \node at (9.8,12.7) {\footnotesize$(2,2)$};
    \draw[rounded corners = 1mm, gray, fill=llgray] (9.4, 11.4) rectangle (10.2, 12);
    \node at (9.8,11.7) {\footnotesize$(2,1)$};
    \draw[rounded corners = 1mm, gray, fill=llgray] (9.4, 10.4) rectangle (10.2, 11);
    \node at (9.8,10.7) {\footnotesize$(2,0)$};
    \draw[rounded corners = 1mm, gray, fill=llgray] (14.4, 10.4) rectangle (15.2, 11);
    \node at (14.8,10.7) {\footnotesize$(0,0)$};
    \end{tikzpicture}}
    \caption{The labelled elements in $\cT_4$}
    \label{fig: tamari lattice with labels}
\end{figure}

Let $C$ be an almost Boolean antichain of size $n$ in $\cT_{n+1}$. Using \Cref{lem:size bounds}, we see that $C$ consists of $n-1$ elements that are the tops of $q$-equivalence classes corresponding to a Boolean antichain in $\cT_n$, as well as one non-top element of the class $[\hat{1}_{\cT_{n+1}}]$.
In $\cT_2$, the element labelled $(0,0)$ is the only Boolean antichain of size $1$. Thus, we can see that the elements of $C$ have the labels $(0,0),(1,j_1),\dots,(n-1,j_{n-1})$. The first entry of each label is a distinct integer from $0$ to $n-1$. Note that not all antichains with labels of this form are almost Boolean antichains.

\begin{definition}
Let $S$ be a set of labels in $\cT_{n+1}$ of the form \[S = \{(0,0),(1,j_1),\ldots,(n-1,j_{n-1})\}.\]  
Then $S$ is called a \textit{valid labelling} if for every $k \in [1, n-1]$ and for every $i \in [j_k, k-1]$ we have that $j_i \geq j_k$ (see \Cref{fig:triangleValidLabelling}). We say that a set of labels is \textit{compatible} with another label if there exists a valid labelling containing all these labels.
\end{definition}

\begin{figure}
    \centering
    \begin{tikzpicture}
         \foreach \x in {0,...,7}
            \foreach \y in {0,...,\x} 
                {\node[]  (\x\y) at (1.2*\y,-0.8*\x) {(\x, \y)};}
    \draw[thick] (3.05, -5.9) rectangle (4.15, -5.3);
    \fill[pattern=north west lines](-0.5, -5.9) rectangle (3, -2);
    \fill[pattern=north west lines](4.2, -5.9) rectangle (9, -5.3);
    \node[](text) at (6, -1) {Labels compatible with $(7, 3)$};
    \draw[rounded corners = 1mm](-0.5, .8)--(3.1, -1.6)--(3.1, -1.9)--(-0.5, -1.9)--cycle;
    \draw[rounded corners = 3mm](3.1, -1.6)--(8.4, -5.1)--(3.1, -5.1)--cycle;
    \draw[<-](2.5, -1)--(3.1, -1);
    \draw[<-](3.3, -2)--(3.6, -1.4);
    \end{tikzpicture}
    \caption{Labels compatible with $(7, 3)$ for a valid labelling in $\cT_9$}
    \label{fig:triangleValidLabelling}
\end{figure}

We now show that valid labellings label Boolean antichains of maximal size.

\begin{lemma}\label{lem: ValidLabellings count BA of max size}
    Let $C$ be an almost Boolean antichain of length $n$ in $\cT_{n+1}$. Then $C$ consists of labelled elements that form a valid labelling if and only if $C$ is a Boolean antichain.
\end{lemma}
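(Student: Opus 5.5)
We argue by induction on $n$, using the characterisation of Boolean antichains from \Cref{prop: BooleanAreValid} and \Cref{prop: ValidAreBoolean}. By these two propositions, an almost Boolean antichain $C=\{f_n(c_1),\dots,f_n(c_{n-1}),w\}$ of size $n$ in $\cT_{n+1}$ is Boolean if and only if $\{c_1,\dots,c_{n-1}\}$ is Boolean in $\cT_n$ and at most one $i$ satisfies $k_w\in I_{f_n(c_i)}=I_{c_i}$. Here $w$ is the non-top element of $[\hat{1}_{\cT_{n+1}}]$ carrying label $(n-1,j_{n-1})$ with $j_{n-1}=h(w)$. The first condition is, by the induction hypothesis applied to the almost Boolean antichain $\pi_n(C)$, equivalent to the labels $(0,0),\dots,(n-2,j_{n-2})$ forming a valid labelling in $\cT_n$. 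The base case $n=1$ is the single element labelled $(0,0)$ in $\cT_2$.

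It therefore remains to show that, when the smaller labelling is valid, the condition ``at most one $i$ with $k_w\in I_{c_i}$'' is equivalent to ``$j_i\geq j_{n-1}$ for all $i\in[j_{n-1},n-2]$''. For this I would first compute the index data of labelled elements explicitly. For $w=[\hat 1_{\cT_{n+1}}]_h$, \Cref{lem:chain} and \Cref{lem: closing parentheses in equivalence class} show that $I_w=[n-h+\dots]$; concretely, going down the chain adds the indices $n,n-1,\dots$ in turn. Then $I_w=[n-(n-1-h)... ]$, and this needs to be pinned down precisely. I expect the outcome to be $I_w=[h+1,n]$ and $k_w=h=j_{n-1}$; the check is that $h(w)=n-|I_w|$ and that the bottom element has $I=[1,n]$. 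Hence $k_w=j_{n-1}$.

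For an element labelled $(m,j_m)$, which is the iterated lift $f_n\circ\dots\circ f_{m+2}$ of the height-$j_m$ element of $[\hat 1_{\cT_{m+2}}]$, equation \eqref{eq: I_q_i} gives $I=[j_m+1,m+1]$. The condition $k_w=j_{n-1}\in I_{c_m}$ thus reads $j_m<j_{n-1}\leq m+1$, that is, $m\geq j_{n-1}-1$ and $j_m<j_{n-1}$. The label $(j_{n-1}-1,j_{j_{n-1}-1})$ always satisfies this, because $j_{j_{n-1}-1}\leq j_{n-1}-1$; it is the ``one allowed'' element. Consequently, ``at most one'' holds if and only if no $m\in[j_{n-1},n-2]$ has $j_m<j_{n-1}$. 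This is exactly validity at $k=n-1$. Validity at smaller $k$ is the inductive hypothesis, which handles both directions.

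The main obstacle is getting the explicit closing index sets of labelled elements right, including the off-by-one issues, since everything hinges on the interval formula $I=[j_m+1,m+1]$ and $k_w=j_{n-1}$. I would verify these against the example in \Cref{fig: tamari lattice with labels} ($\cT_4$) before writing the general argument. A second point needing care is that the labels of $\pi_n(C)$ are exactly the first $n-1$ labels of $C$. This holds because $f_n$ preserves labels and $q_n\circ f_n=\mathrm{id}$.
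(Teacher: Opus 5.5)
Your proposal is correct and is essentially the paper's proof. The paper also computes $I_{c_i}=[j_i+1,i+1]$ and $k_w=j_{n-1}$; your expected values are right, since $I_{\hat{1}}=\varnothing$ and each downward step in $[\hat{1}]$ adds the current $k$ by \Cref{lem: closing parentheses in equivalence class}. It then translates ``at most one $i$ with $k_w\in I_{c_i}$'' into the validity condition at $k=n-1$ via \Cref{prop: BooleanAreValid} and \Cref{prop: ValidAreBoolean}, and handles the lower levels through the Boolean projections $\pi_{k+2}(C)$, which your induction on $n$ simply repackages.

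One small slip, shared with the paper: for $j_{n-1}=0$ there is no label $(j_{n-1}-1,\cdot)$. In that case $k_w=0$ lies in no closing index set, so the equivalence still holds trivially.
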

\begin{proof}
    Since $C$ is an almost Boolean antichain, it is of the form 
    $$
    C=\{c_0,c_1,\dots,c_{n-1}\},
    $$
    where each element $c_i$ is labelled with the label $(i,j_i)$. The label $(i,j_i)$ originally corresponds to the element $d$ in $[\hat{1}_{\cT_{i+2}}]$ of height $j_i$. Since the closing index set $I_{\hat{1}_{\cT_{i+2}}}$ is empty, we know by \Cref{lem:chain} and \Cref{lem: closing parentheses in equivalence class} that $I_d = [j_i+1, i+1]$. By construction, $c_i$ is of the form
    $$
    c_{i} = f_n \circ f_{n-1} \circ \dots \circ f_{i+2}(d).
    $$
    Using \eqref{eq: I_q_i} yields that $I_{c_i} = [j_i+1, i+1]$ as well. In particular, we have 
    $$
        I_{c_{n-1}} = [j_{n-1}+1, n] \text{ and } k_{c_{n-1}}=j_{n-1}.
    $$
    The inclusion $k_{c_{n-1}} \in I_{c_{j_{n-1}-1}}$ always holds.
    Hence, there exists at most one $c_i \in C$ such that $k_{c_{n-1}} \in I_{c_i}$ if and only if we have $j_i \geq j_{n-1}$ for every $i \in [j_{n-1},n-2]$. 
    
    Using \Cref{prop: ValidAreBoolean}, this shows that if $C$ consists of labelled elements whose labels are a valid labelling, then $C$ is a Boolean antichain.
    Conversely, let $C$ be a Boolean antichain. Then for all positive integers $k < n-1$, we observe that $\pi_{k+2}(C)$ is a Boolean antichain (see \eqref{eq: pi of C Boolean}) consisting of the elements with the labels $(0,0),(1,j_1),\dots,(k,j_k)$. By \Cref{prop: BooleanAreValid} and the discussion above, we see that $j_i \geq j_k$ holds for every $i \in [j_k, k-1]$. This shows that $C$ consists of labelled elements that form a valid labelling.
\end{proof}

With this we are able to count Boolean antichains of maximal size in the Tamari lattice.

\begin{theorem}\label{thm: main_thm_tamari_lattice}
    The number of Boolean antichains of size $n$ in the Tamari lattice $\cT_{n+1}$ is given by the Catalan number $\mathtt{c}_n$.
\end{theorem}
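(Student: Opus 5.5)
The plan is to turn the count into a count of valid labellings, and then show that valid labellings satisfy the Catalan recursion.

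\textbf{Step 1 (reduction to valid labellings).} I would first show that $C \mapsto$ (set of labels of $C$) is a bijection from $\cC_n(\cT_{n+1})$ to the valid labellings $\{(0,0),(1,j_1),\dots,(n-1,j_{n-1})\}$ with $0\le j_k\le k$.

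For a Boolean antichain $C$ of size $n$:
\begin{itemize}
\item \Cref{prop: BooleanAreValid} makes $C$ almost Boolean.
\item By the discussion preceding the definition of valid labellings, $C$ consists of labelled elements whose first label entries are $0,\dots,n-1$.
\item By \Cref{lem: ValidLabellings count BA of max size}, these labels form a valid labelling.
\end{itemize}
Since each label occurs on exactly one element of $\cT_{n+1}$, the map is injective.

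For surjectivity I argue by induction on $n$. Take a valid labelling and drop the label $(n-1,j_{n-1})$; what remains is again a valid labelling, because the condition for an index $k$ only involves indices below $k$. By induction, the remaining labels label a Boolean antichain $D$ of size $n-1$ in $\cT_n$. Lifting $D$ with $f_n$ and adding the non-top element of $[\hat{1}_{\cT_{n+1}}]$ of height $j_{n-1}$ gives a set $C$ of the shape required in the second case of \Cref{def: Almost Boolean}. The argument of \Cref{prop: ValidAreBoolean}, together with the computation in \Cref{lem: ValidLabellings count BA of max size}, shows that $C$ satisfies \eqref{eq: intersectivity}. Then \Cref{rem:Intersective is antichain} shows that $C$ is an antichain, so it is almost Boolean and hence Boolean.

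The main obstacle is this step. \Cref{def: Almost Boolean} presupposes an antichain, so I must check that the proof of \Cref{prop: ValidAreBoolean} really only uses the structural form of $C$ and not the antichain property. If that fails, I would instead verify incomparability directly via closing index sets. By \eqref{eq: I_q_i}, $I_{c_i}=[j_i+1,i+1]$. Distinct labelled elements lie in distinct $q$-classes or at distinct heights of $[\hat 1]$. Comparabilities can then be excluded using \Cref{cor: closing parenthesis moved to the right} together with the fact that lifts are tops of their classes.

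\textbf{Step 2 (counting).} Let $N(m)$ be the number of sequences $(j_0,\dots,j_{m-1})$ with $j_0=0$, $0\le j_k\le k$, and $j_i\ge j_k$ for all $i\in[j_k,k-1]$. Set $N(0)=1$. Step 1 shows that $|\cC_n(\cT_{n+1})|=N(n)$.

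I then decompose a sequence of length $m\ge1$ according to $j:=j_{m-1}\in[0,m-1]$.
\begin{itemize}
\item The entries $j_j,\dots,j_{m-2}$ are all $\ge j$. Since $j_j\le j$, this forces $j_j=j$.
\item The prefix $(j_0,\dots,j_{j-1})$ is an arbitrary admissible sequence of length $j$. Its conditions only refer to indices below $j$.
\item Subtracting $j$ from the block $(j_j,\dots,j_{m-2})$ gives an admissible sequence of length $m-1-j$ starting with $0$. This works because for each $k$ in the block, the range $[j_k,k-1]$ lies inside the block.
\end{itemize}
Conversely, any such prefix and shifted block, followed by $j_{m-1}=j$, give a valid sequence. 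The key check is that the conditions for block indices stay internal to the block, and the condition for $k=m-1$ holds by the shift.

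This yields
\[
N(m)=\sum_{j=0}^{m-1}N(j)\,N(m-1-j),\qquad N(0)=1,
\]
which is the Catalan recursion. Hence $N(n)=\mathtt{c}_n$, proving \Cref{thm: main_thm_tamari_lattice}.
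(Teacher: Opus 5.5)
Your proposal is correct and follows the paper's route: reduce to valid labellings via \Cref{lem: ValidLabellings count BA of max size}, then split according to the second entry of the label with first entry $n-1$ to obtain Segner's recursion. Your explicit surjectivity step (every valid labelling actually labels an almost Boolean antichain) is something the paper leaves implicit. The antichain check you worry about is immediate, though not by closing index sets alone, which can be nested for two lifts: lifts are pairwise incomparable because $q_n\circ f_n=\mathrm{id}$ and $D$ is an antichain, $w\not\le f_n(d_i)$ because $q_n(w)=\hat{1}_{\cT_n}$, and $f_n(d_i)\not\le w$ because $n\in I_w\setminus I_{f_n(d_i)}$.
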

\begin{proof}
    By \Cref{lem: ValidLabellings count BA of max size}, it suffices to enumerate the valid labellings in $\cT_{n+1}$. For this, we partition the labels compatible with a fixed label $(n-1,m)$ into two sets, $S_1$ and $S_2$ (see \Cref{fig:triangleValidLabelling}) and proceed by induction. The lattice $\cT_2$ has one label, hence one valid labelling, and one Boolean antichain. For $n\geq 2$, let
    \[
    S_1 := \{(i,j_i) \; | \; i \in [m, n-2], \, j_i \in [m, i ]\}.
    \]
    By subtracting $m$ from each first and each second entry of each label, we obtain an order preserving bijection
    \begin{align*}
    S_1 &\to S_1' 
    := \{(\ell, j_\ell) \; | \; \ell \in [0, n-2-m], \, j_\ell \in [0, \ell]\} \\
    (i,j) &\mapsto (i-m, j-m).
    \end{align*}
    In particular, a subset $S$ of $S_1$ is compatible with $(n-1,m)$ if and only if its image in $S'_1$ is a valid labelling for $\cT_{n-m}$.
    By induction, there are $\mathtt{c}_{n-m-1}$ valid labellings in $\cT_{n-m}$. Hence, there are $\mathtt{c}_{n-m-1}$ sets of labels in $S_1$ that are compatible with $(n-1,m)$. Now let
    \[
    S_2 := \{(i,j_i) \; | \; i \in [0, m-1], \, j_i \in [0, i]\}.
    \]
    Similarly, there are $\mathtt{c}_m$ sets of labels in $S_2$ that are compatible with $(n-1,m)$. We notice that any pair of labels from $S_1$ and $S_2$ cannot violate the condition for a valid labelling. Since $m$ was chosen arbitrarily from $[0,n-1]$, we can use a well-known identity for Catalan numbers \cite{Segner} to see that there are
    \begin{equation*}
        \sum_{m= 0}^{n-1} \mathtt{c}_m \mathtt{c}_{n-m-1} = \mathtt{c}_n
    \end{equation*}
    valid labellings in total.
\end{proof}

\section{Perspectives}\label{sec: Perspectives}
In this paper, we studied Boolean antichains in specific examples of families of lattices. The success of these computations leads us to formulate several follow-up questions.

\begin{question}
    Can \Cref{prop: BooleanAreValid} and \Cref{prop: ValidAreBoolean} be used to determine a (recursive) formula for the number of Boolean antichains of size $k$ in the Tamari lattice $\cT_{n+1}$ for $2\leq k < n$? Can the construction of Boolean antichains be understood in more general classes of lattices, such as geometric, distributive or modular lattices? Can our usage of lattice congruence be generalized to other settings (as in \Cref{rem: generalisation BooleanAreAlmostBoolean})?
\end{question}

The inclusion of order ideals induces a natural order on Boolean antichains. Based on the formulas explained in this paper, we raise the following questions.

\begin{question}
Is the order on Boolean antichains of maximal size in the Tamari lattice equivalent to the Tamari order, the order on non-crossing partitions, or the order on Dyck paths? What can be said in general about the poset of Boolean antichains in a given lattice?
\end{question}

One can investigate the interplay between the Boolean property of antichains and other properties of the order filters and order ideals generated by them.

\begin{question}
Is it possible to count Boolean antichains whose associated order filters are intervals? 
\end{question}

A less restrictive class of antichains is that of strong antichains, see \cite{gottesman2024antichains} for a definition.

\begin{question}
Can we use our methods to determine the number of strong antichains?
\end{question}

\printbibliography
\end{document}